\numberwithin{equation}{section}
\theoremstyle{plain}
\newtheorem{Th}{Theorem}[section]
\newtheorem{Lemma}[Th]{Lemma}
\newtheorem{Prop}[Th]{Proposition}
\theoremstyle{definition}
\newtheorem{Rem}[Th]{Remark}
\newtheorem{?}[Th]{Problem}
\def\al{\alpha}
\def\be{\beta}
\def\w{\wedge}
\def\R{\mathbb{R}}
\def\C{\mathbb{C}}
\def\CP{\mathbb{C}\mathbb{P}}
\def\Lm{\Lambda}
\def\lm{\lambda}
\def\om{\omega}
\def\Om{\Omega}
\def\vp{\varphi}
\def\U{\mathrm{U}}
\def\SO{\mathrm{SO}}
\def\SU{\mathrm{SU}}
\def\Sp{\mathrm{Sp}}
\def\G{\mathrm{G}}
\def\spin{\mathrm{Spin}}
\def\Spin{\mathrm{Spin}}
\def\ip{\raise1pt\hbox{\large$\lrcorner$}\>}
\DeclareMathOperator{\vol}{vol}
\begin{document}
	
\title{Examples of deformed Spin(7)-instantons/Donaldson-Thomas connections}

\author[U. Fowdar]{Udhav Fowdar}

\address{
Dipartimento di Matematica G. Peano, Universit\`{a} di Torino \\  
Via Carlo Alberto 10 \\ 
10123 Torino\\ 
Italy
}

\email{
udhav.fowdar@unito.it, udhav21@gmail.com
}

\keywords{Exceptional holonomy, deformed Donaldson-Thomas connections, mirror symmetry} 
\subjclass[2010]{MSC 53C07, 53C26, 53C29}	
	
\begin{abstract} 
We construct examples of deformed Hermitian Yang-Mills connections and deformed $\Spin(7)$-instantons (also called $\spin(7)$ deformed Donaldson-Thomas connections) on the cotangent bundle of $\C\mathbb{P}^2$ endowed with the Calabi hyperK\"ahler structure. Deformed $\spin(7)$-instantons on
cones over $3$-Sasakian $7$-manifolds are also constructed. We
show that these can be used to distinguish between isometric structures and also between $\Sp(2)$ and $\spin(7)$ holonomy cones. 
To the best of our knowledge, these are the first non-trivial examples of deformed $\spin(7)$-instantons.
\end{abstract}

\maketitle
\tableofcontents

\section{Introduction}
In this paper we construct cohomogeneity one examples of deformed $\spin(7)$-instantons (also called $\spin(7)$ deformed Donaldson-Thomas connections \cite{KawaiMirror2022, KawaiYamamoto22}) on $T^*\C\mathbb{P}^2$ endowed with its $\SU(3)$-invariant Calabi hyperK\"ahler structure \cite{CalabiAnsatz1979}. We study their relations with the deformed Hermitian Yang-Mills connections associated to the underlying hyperK\"ahler $2$-forms. Furthermore, we show that the deformed $\spin(7)$ instantons on the hyperK\"ahler cone are different to those on the $\spin(7)$ cone obtained when the $3$-Sasakian link is equipped with the squashed metric. 
We also construct deformed Hermitian Yang-Mills connections on $T^*\C\mathbb{P}^1$ and the flag manifold $\SU(3)/\mathbb{T}^2$ to illustrate certain similarities and differences with our examples on $T^*\C\mathbb{P}^2$. 
Before stating our  results more precisely we first recall a few basic notions and elaborate on the motivation for this work.

Given a K\"ahler manifold $(M^{2n},g,J,\om)$ and a Hermitian line bundle $L\to M$, a connection  $1$-form 
$A$ on $L$ is called a deformed Hermitian Yang-Mills (dHYM) connection with phase $e^{i\theta}$ if its curvature 
$F_A$, viewed as a \textit{real} $2$-form, satisfies
\begin{gather}
	F_A^{0,2}=0\label{equ: general dhym equation holo},\\
\mathrm{Im}((\om+iF_A)^n)=\tan(\theta)\mathrm{Re}((\om+iF_A)^n),\label{equ: general dhym equation}
\end{gather}
for some constant $\theta$. 
The dHYM equations (\ref{equ: general dhym equation holo})-(\ref{equ: general dhym equation}) were introduced 
in \cite{Leung2000, Marino2000} where it was shown that these
arise naturally in the context of the SYZ conjecture for mirror symmetry \cite{SYZ1996}. Roughly, the SYZ conjecture asserts that mirror pairs of Calabi-Yau $3$-folds admit special Lagrangian torus fibrations in certain limits. 
Solutions to the dHYM equations arise as mirrors to special Lagrangian sections of these fibrations via a real Fourier-Mukai transform cf. \cite[Section 1.2]{Collins2018deformed}. 
Thus, the SYZ conjecture suggests an intricate relation between the calibrated geometry and gauge theory of mirror pairs of Calabi-Yau $3$-folds. 
The SYZ conjecture also has higher dimensional analogues in the context of $\mathrm{G}_2$ and $\Spin(7)$ geometry \cite{GYZ}, and a similar argument as in the Calabi-Yau case leads to the notion of deformed $\mathrm{G}_2$- and deformed $\Spin(7)$-instantons as mirrors to (co)-associatives and Cayley submanifolds respectively via the Fourier-Mukai transform 
cf. \cite{Karigiannis2009, KawaiMirror2022, KawaiYamamoto22, LeeLeung2009}. 

In recent years, dHYM connections have received considerable interest, see for instance \cite{CollinsYaustability, Collins2018deformed, AdamJacob2022, JacobYau2017} and the references therein, and there are by now many known examples on compact K\"ahler manifolds; especially provided by the work of Chen in \cite{Chen2021} where their existence is related to a notion of stability. 
By contrast, very little is known in the context of $\G_2$ and $\Spin(7)$ geometry. The first non-trivial examples of deformed $\G_2$-instantons were constructed by Lotay-Oliveira in \cite{Lotay2020} over certain nearly parallel $\G_2$ manifolds (these are special classes of Einstein manifolds whose cone metrics have holonomy contained in $\spin(7)$). 
Here `non-trivial' means that these connections are not flat and do not arise via pullback from lower-dimensional constructions. 
Subsequently in \cite{FowdardG2}, we constructed non-trivial examples of deformed $\G_2$-instantons over the spinor bundle of $S^3$ endowed with the complete Brandhuber et al. $\G_2$ holonomy metric \cite{Brandhuber01} and also the conical Bryant-Salamon metric \cite{Bryant1989}. These currently exhaust all the known non-trivial examples of deformed $\G_2$-instantons. To the best of our knowledge, there are at present no known non-trivial examples of deformed $\spin(7)$-instantons and it is precisely this problem that motivated this work.

Given a $\spin(7)$ manifold $(M^8,\Phi)$ and a Hermitian line bundle $L \to M$, a connection $A$ on $L$ is called a deformed $\spin(7)$-instanton if $F_A$ satisfies:
\begin{gather}
	\pi^4_7(F_A\w F_A) =0,\label{equ: dspin7 I}\\
	\pi^2_7(F_A-\frac{1}{6}* F_A^3)=0,\label{equ: dspin7 II}
\end{gather}
where $\pi^2_7$ and $\pi^4_7$ are projection maps determined by the $\spin(7)$ $4$-form $\Phi$, see Section \ref{section: preliminaries}. If $M$ is a Calabi-Yau $4$-fold then one can view (\ref{equ: dspin7 I})-(\ref{equ: dspin7 II}) as a generalisation of (\ref{equ: general dhym equation holo})-(\ref{equ: general dhym equation}) in the case when 
$\theta=0$ cf. \cite[Theorem 7.5]{KawaiMirror2022} and \cite[Lemma 7.2]{KawaiYamamoto22}. 
In this paper we consider the deformed $\spin(7)$-instanton equations (\ref{equ: dspin7 I})-(\ref{equ: dspin7 II}) on $ T^*\C\mathbb{P}^2$ endowed with its well-known hyperK\"ahler metric arising from the Calabi ansatz \cite{CalabiAnsatz1979}.
This set up is of particular interest for several reasons: (1) Since the Calabi hyperK\"ahler structure is invariant under a cohomogeneity one action of $\SU(3)$, working in this invariant set up we have that the complicated systems of PDEs given by (\ref{equ: general dhym equation holo})-(\ref{equ: general dhym equation}) and (\ref{equ: dspin7 I})-(\ref{equ: dspin7 II}) reduce to more tractable systems of ODEs. 
(2) As $T^*\C\mathbb{P}^2$ is a hyperK\"ahler manifold, it admits a triple of orthogonal K\"ahler forms $\om_1,\om_2,\om_3$. Hence one can consider the dHYM equations with respect to each K\"ahler form and compare how the solutions vary. (3) We can also view $T^*\C\mathbb{P}^2$ as a $\spin(7)$ manifold with respect to different $\spin(7)$ $4$-forms, defined by (\ref{sp2andspin7i}), and again we can compare the deformed $\spin(7)$-instantons  in each case. (4) Closely related to $T^*\C\mathbb{P}^2$ is the spinor bundle of $\C\mathbb{P}^2$ which only exists as an orbifold. The latter
admits a $\spin(7)$ holonomy metric due to the Bryant-Salamon construction in \cite{Bryant1989} and moreover, it is also invariant under the cohomogeneity one action of $\SU(3)$. As such we can study the deformed $\spin(7)$-instanton equations in this set up as well. 
Note that the Bryant-Salamon and Calabi metrics are both asymptotic to a conical metric over a $3$-Sasakian $7$-manifold; however, in the latter case the metric has holonomy $\Sp(2)$ and in the former case $\spin(7)$. Thus, we can compare the deformed $\spin(7)$-instantons over each cone. With this in mind we can now elaborate on our main results.

\textbf{Statement of main results.} 
Let $\om_1,\om_2,\om_3$ denote the hyperK\"ahler triple on $T^*\C\mathbb{P}^2$ as given in Proposition \ref{prop: HK structure of TCP2} and let $\Phi_1,\Phi_2, \Phi_3$ denote the $\spin(7)$ $4$-forms defined by (\ref{sp2andspin7i}). Parametrising the space of complex line bundles on $T^*\C\mathbb{P}^2$ by $k \in \mathbb{Z} \cong H^2(T^*\C\mathbb{P}^2,\mathbb{Z})$, we have that:
\begin{Th}\label{thm statement 1}
	For generic values of $k$ and $\theta$, there are $2$ distinct $\SU(3)$-invariant dHYM connections on $T^*\C\mathbb{P}^2$ with respect to $\om_1$. On the other hand, for generic values of $k$ and $\theta$, there are $4$ distinct $\SU(3)$-invariant dHYM connections on $T^*\C\mathbb{P}^2$ with respect to $\om_2$ and $\om_3$. Moreover, when $\theta=0$, there is a unique hyper-holomorphic connection which occurs as a dHYM connection for $\om_1,\om_2$ and $\om_3$.
\end{Th}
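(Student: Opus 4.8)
The plan is to exploit the cohomogeneity-one action of $\SU(3)$ to reduce the fully nonlinear system \eqref{equ: general dhym equation holo}--\eqref{equ: general dhym equation} (here $n = 4$) to a polynomial equation in one variable, and to read off the numbers $2$ and $4$ as its number of admissible real solutions. First I would set up the reduction: away from the zero section $\CP^2 \subset T^*\CP^2$ the space is $\SU(3)$-equivariantly diffeomorphic to a product $(0,\infty)_s \times N$, with $N = \SU(3)/\U(1)$ the $3$-Sasakian principal orbit, and, using the adapted coframe together with the explicit formulas of Proposition~\ref{prop: HK structure of TCP2}, I would describe the $\SU(3)$-invariant connections on $L_k \to T^*\CP^2$: modulo invariant gauge each is encoded by finitely many functions of $s$, while the requirement that $c_1(L_k)$ correspond to $k$ under $H^2(T^*\CP^2;\mathbb{Z})\cong\mathbb{Z}$, together with smoothness across the collapsing orbit, pins down their boundary values at $s = 0$ and prescribes the admissible behaviour as $s \to \infty$. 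One then writes $F_A$ explicitly in the adapted coframe in terms of these profiles and their first derivatives.

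For $\om_1$: imposing \eqref{equ: general dhym equation holo} with respect to $J_1$ restricts to the invariant connections compatible with the invariant holomorphic structure on $L_k$, so $F_A$ is of type $(1,1)$ and hence simultaneously diagonalisable with $\om_1$; then the K\"ahler angles $\lambda_1(s),\dots,\lambda_4(s)$ of $F_A$ relative to $\om_1$ are well defined and $(\om_1+iF_A)^4$ equals, up to a positive constant, $\prod_{a=1}^4(1+i\lambda_a)$ times the volume form. Hence \eqref{equ: general dhym equation} is equivalent to the single condition $\tan\!\big(\textstyle\sum_{a=1}^4 \arctan\lambda_a\big) = \tan\theta$, i.e.\ $e_1 - e_3 = \tan\theta\,(1 - e_2 + e_4)$ in the elementary symmetric functions $e_j$ of the $\lambda_a$, which after clearing denominators becomes a polynomial relation among $s$, the profiles, and their first derivatives, i.e.\ a first-order ODE. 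Because $\om_1$ is the canonical exact symplectic form its profile in the adapted coframe is degenerate---its eigenvalues relative to the metric coincide---which forces a coincidence among the $\lambda_a$ and reduces the degree of this relation, in the profile and its derivative, from four to two; integrating the two resulting branches and matching the boundary data above, I expect that for $(k,\theta)$ outside a measure-zero set exactly $2$ of them extend to smooth global $\SU(3)$-invariant dHYM connections. For $\om_2$ and $\om_3$ one proceeds identically, first noting that a circle of isometries of the hyperK\"ahler structure rotates $\om_2$ into $\om_3$, so it suffices to treat $\om_2$: its profile is non-degenerate, the analogous relation is genuinely of degree four, and for generic $(k,\theta)$ all four branches close up to smooth global invariant connections, giving $4$ distinct dHYM connections.

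When $\theta = 0$ the condition above degenerates to $\textstyle\sum_{a} \arctan\lambda_a \in \{0,\pm\pi\}$, which is satisfied by any hyper-holomorphic $F_A$---one of type $(1,1)$ with respect to $J_1, J_2$ and $J_3$ simultaneously---because for such $F_A$ the K\"ahler angles relative to each $\om_j$ occur in pairs $\{\mu,-\mu\}$, so that $\prod_{a}(1+i\lambda_a)$ is real. I would then show that the hyper-holomorphic condition forces the profiles to satisfy a first-order linear system which, with the boundary data above, has a unique bounded solution; this produces a single $\SU(3)$-invariant hyper-holomorphic connection, and by the preceding observation it is automatically a dHYM connection for each of $\om_1,\om_2,\om_3$. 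Comparing with the branch counts of the previous step specialised to $\theta = 0$ shows that it is the only connection lying in all three families, which is the final assertion.

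The main obstacle is the admissibility analysis in the $\om_1$ and $\om_2$ cases: after the reduction one must determine exactly which of the algebraic branches extend across the singular orbit $\CP^2$ to a globally smooth connection with the prescribed Chern class, and---in the $\om_1$ case---locate precisely the degeneracy in its profile responsible for the drop from four to two admissible solutions and control its effect on the endpoint behaviour of the ODE. The genericity hypothesis on $(k,\theta)$ is exactly what keeps the discussion away from the discriminant loci of these polynomials and from the special values at which distinct branches collide or fail to be admissible.
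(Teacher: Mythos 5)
Your overall reduction---the cohomogeneity-one ansatz, using holomorphicity to kill most of the profile functions, rewriting the dHYM equation through the eigenvalues $\lambda_a$ of $F_A$ relative to $\om_i$ as $e_1-e_3=\tan\theta\,(1-e_2+e_4)$, integrating the resulting exact first-order ODE to a polynomial first integral, and counting the branches compatible with the boundary data at the singular orbit---is essentially the strategy of Theorems \ref{theorem: dhym for om1} and \ref{theorem: dhym for om2}. Your treatment of $\theta=0$ is also sound: the pairing $\{\mu,-\mu\}$ of the eigenvalues of a hyper-holomorphic curvature (an element of $\mathfrak{sp}(2)\subset\mathfrak{su}(4)$) gives a clean alternative to the paper's Proposition \ref{prop: hyperholomorphic implies deformed}, and uniqueness of the invariant hyper-holomorphic connection with the right boundary value is exactly Lemma \ref{lemma: holomorphic}.

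The genuine gap is your mechanism for the count $2$ versus $4$. First, the attribution is backwards: in the Calabi structure $\om_1$ is the cohomologically \emph{non-trivial} form (it restricts to the Fubini--Study form on the zero section), while $\om_2,\om_3$ are the exact ones. More seriously, for $\om_1$ the eigenvalues of $F_A$ relative to $\om_1$ do \emph{not} coincide: with $a_3=a_4=0$ they are $a_2'/(f_0f_2)$, $-2a_2/f_3^2$, $-(a_2+k)/f_5^2$, $-(a_2-k)/f_7^2$, four generically distinct functions, and the first integral of the dHYM ODE remains a genuine quartic in $a_2$ (equation (\ref{equ: dhym om1})); the degree does not drop from four to two. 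The actual reason only $2$ of the $4$ branches survive lies at the singular orbit: after fixing the integration constant by $a_2(\sqrt{2c})=k$, the quartic evaluated at $r=\sqrt{2c}$ has $a_2=k$ as a \emph{double} root for generic $(k,\theta)$, so exactly two branches emanate from the admissible boundary value, while the remaining two roots solve $(a_2+k)^2\tan\theta-4c(a_2+k)-4c^2\tan\theta=0$ and do not extend across the zero section. For $\om_2$, by contrast, the quartic degenerates at $r=\sqrt{2c}$ (the coefficient $\sqrt{r^4-4c^2}$ vanishes there) in such a way that \emph{all four} branches satisfy $a_3(\sqrt{2c})=0$. So the dichotomy is a statement about root multiplicities at the boundary, not about the degree of the polynomial, and as written your argument for the ``$2$'' would not go through. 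You also still owe the verification that each surviving branch stays real and single-valued on all of $[\sqrt{2c},\infty)$---the paper settles this with a discriminant computation---which you correctly flag as an obstacle but do not supply.
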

An explicit description of these solutions is given in Theorem \ref{theorem: dhym for om1} and \ref{theorem: dhym for om2}, see also Remark \ref{rem: dhym om1} and \ref{rem: dhym om2}. Thus, the moduli spaces of $\SU(3)$-invariant dHYM connections with respect to the K\"ahler forms $\om_1,\om_2,\om_3$ are all distinct and they intersect in exactly one point, namely, the hyper-holomorphic connection. It is also worth pointing out that the K\"ahler form $\om_1$ is distinguished from $\om_2$ and $\om_3$ by the fact that it is non-trivial in cohomology. 
Since $\om_1,\om_2,\om_3$ are all compatible with the Calabi metric $g_{Ca}$, this shows that the dHYM connections distinguish between isometric K\"ahler structures. A similar result was demonstrated by Lotay-Oliveira in \cite{Lotay2020} whereby they showed that deformed $\G_2$-instantons can be used to distinguish between isometric $\G_2$-structures on $3$-Sasakian $7$-manifolds. In the deformed $\spin(7)$ case for each fixed $k$ we show that:
\begin{Th}\label{thm statement 2}
The moduli spaces of deformed $\spin(7)$-instantons with respect to $\Phi_1,\Phi_2,\Phi_3$ all consist of a $0$-dimensional component comprising of the hyper-holomorphic connection and all the other components are $1$-dimensional. For $\Phi_1$, there are $2$ one dimensional components while for $\Phi_2$ and $\Phi_3$ there are $4$ one dimensional components.
\end{Th}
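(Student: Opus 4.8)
The plan is to exploit the cohomogeneity-one symmetry to reduce the deformed $\spin(7)$-instanton equations \eqref{equ: dspin7 I}-\eqref{equ: dspin7 II} to a system of ODEs in a single radial variable, in parallel with the reduction used for the dHYM problem in Theorem \ref{thm statement 1}. First I would fix $k \in \mathbb{Z}$, write down the general $\SU(3)$-invariant connection $1$-form $A$ on the corresponding line bundle over $T^*\CP^2$, and compute its curvature $F_A$ in terms of the invariant coframe adapted to the principal orbit; the curvature is governed by one (or a small number of) function(s) of the radial coordinate together with the topological constant $k$. Substituting into the projections $\pi^4_7(F_A\w F_A)$ and $\pi^2_7(F_A - \tfrac16 * F_A^3)$, computed against each of the three $\spin(7)$ $4$-forms $\Phi_1,\Phi_2,\Phi_3$ from \eqref{sp2andspin7i}, I expect \eqref{equ: dspin7 I} to be automatically satisfied by invariance (or to impose one algebraic constraint), while \eqref{equ: dspin7 II} becomes a first-order ODE. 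The key point is that the Calabi hyperK\"ahler structure relates $\Phi_i$ to the pairs $\om_j,\om_k$, so the $\spin(7)$ ODE should factor through, or be closely tied to, the dHYM ODEs already analysed; this is where I would borrow the explicit solutions of Theorem \ref{theorem: dhym for om1} and \ref{theorem: dhym for om2}.

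Next I would analyse the resulting ODE. Since the dHYM analysis produced, for each $\om_j$, an algebraic relation whose solution set has $2$ branches for $\om_1$ and $4$ branches for $\om_2,\om_3$, I expect the $\spin(7)$ equation for $\Phi_i$ to decompose analogously: a discrete (constant-curvature) solution corresponding to the hyper-holomorphic connection, plus families parametrised by an integration constant. Concretely, the hyper-holomorphic connection solves \eqref{equ: dspin7 I}-\eqref{equ: dspin7 II} for all three $\Phi_i$ simultaneously (it is anti-self-dual, hence automatically a $\spin(7)$-instanton and a fortiori a deformed one with vanishing higher-order term), giving the $0$-dimensional component; the remaining solutions come from a genuinely first-order ODE whose solution space, after imposing the smoothness/closure condition at the singular orbit (the zero section $\CP^2$), is $1$-dimensional. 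Counting the branches of the algebraic part of the equation — exactly as in the dHYM count — yields $2$ one-dimensional components for $\Phi_1$ and $4$ for $\Phi_2,\Phi_3$.

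To finish I would verify: (i) that each branch does carry a global solution, i.e. the ODE solutions extend smoothly over the singular orbit and out to infinity, which requires checking the indicial behaviour of the ODE at the zero section and confirming the relevant boundary conditions are met (this is the standard bolt-type analysis); (ii) that distinct branches give genuinely distinct connections, i.e. gauge inequivalent, which follows from their differing asymptotics or Chern--Simons-type invariants; and (iii) that the $0$-dimensional stratum is isolated, i.e. the hyper-holomorphic solution is not a limit of the one-dimensional families (or, if it is a limit, that it nonetheless sits in the closure as a boundary point rather than an interior point, so the component count stands). The main obstacle I anticipate is step (i): ensuring that the branches selected by the algebraic constraint are precisely those compatible with smooth extension over the singular orbit, since a priori some branches of the curvature ODE could be incompatible with the bundle topology for a given $k$ or could blow up before reaching infinity. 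Handling this will likely require the same case-by-case ODE analysis (monotonicity, asymptotic expansions) that underpins Theorem \ref{theorem: dhym for om1} and \ref{theorem: dhym for om2}, together with a careful accounting of how the integer $k$ enters the indicial roots.
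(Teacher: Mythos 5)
Your overall strategy (cohomogeneity-one reduction to radial ODEs, followed by a boundary analysis at the bolt) is the paper's strategy, but there are two concrete gaps. The first is your treatment of equation (\ref{equ: dspin7 I}): you expect $\pi^4_7(F_A\w F_A)=0$ to be ``automatically satisfied by invariance (or to impose one algebraic constraint)''. It is neither. The invariant part of $\Lm^4_7$ is three-dimensional (spanned, for $\Phi_1$, by $\al_2\w\om_3$, $\om_1\w\om_2$ and $\om_1\w\om_3$ via Propositions \ref{prop: spin7modulesasSp2} and \ref{prop: spaces Ei}), so (\ref{equ: dspin7 I}) becomes a system of three first-order ODEs in $a_2,a_3,a_4$, and solving this system is what produces the structural dichotomy of solutions (either $a_2=2ck/r^2$ with $a_3,a_4$ proportional to a common profile $p(r)$, or $a_3,a_4$ slaved to $a_2$); only after this reduction does (\ref{equ: dspin7 II}) collapse to a single exact scalar ODE. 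Nor can you discharge (\ref{equ: dspin7 I}) by appealing to the Kawai--Yamamoto implication from (\ref{equ: dspin7 II}), because that requires $*F_A^4\neq 24$ and, as Remarks \ref{rem: dspin7 phi1} and \ref{rem: dspin7 phi2} record, several of the resulting families satisfy $*F_A^4=24$ identically. Skipping this step means you never see why the solution set splits into precisely the components being counted.

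The second gap is the claim that the component count follows ``exactly as in the dHYM count''. The numbers $2$ and $4$ do reappear, but for a different reason than the branch count of the quartic in Theorems \ref{theorem: dhym for om1} and \ref{theorem: dhym for om2} at fixed $(k,\theta)$. For $\Phi_1$ one of the two one-dimensional components is a \emph{circle} of connections that are dHYM with phase $1$ with respect to the varying K\"ahler forms $(C_3\om_2+C_4\om_3)/\sqrt{C_3^2+C_4^2}$, while the other is a \emph{line} of $\om_1$-dHYM connections swept out by varying the phase $\theta$ over all values; neither is obtained by quoting the dHYM theorems for a fixed K\"ahler form and fixed phase, and only the first is covered by Theorem \ref{thm: result of kawai-yamamoto}\,(1). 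So ``borrowing'' the dHYM solutions cannot substitute for solving the $\spin(7)$ ODEs directly and then identifying the families a posteriori, which is what the paper does in Theorems \ref{theorem: dspin7 for phi1} and \ref{theorem: dspin7 for phi2}. Your verification points (i)--(iii) are sensible but are handled in the paper by the explicit closed-form solutions rather than by indicial or asymptotic analysis.
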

An explicit description of these solutions is given in Theorem \ref{theorem: dspin7 for phi1} and \ref{theorem: dspin7 for phi2}. As before we see that the moduli spaces of deformed $\spin(7)$-instantons distinguish between isometric $\spin(7)$-structures. It turns out that all the above deformed $\spin(7)$-instantons also occur as dHYM connections with respect to certain K\"ahler forms lying in the $2$-sphere generated by $\langle\om_1,\om_2,\om_3\rangle$ and also for different phase angle $\theta$, see Remark \ref{rem: dspin7 phi1} and \ref{rem: dspin7 phi2}. In particular, this shows that deformed $\spin(7)$-instantons are not equivalent to dHYM connections with phase $1$ when $M$ has holonomy contained in $\SU(4)$ but is non-compact; this is rather different from the compact situation as shown by Kawai-Yamamoto in \cite[Theorem 7.5]{KawaiMirror2022}, see also Theorem \ref{thm: result of kawai-yamamoto} (2) below. An analogous result was shown by Papoulias in \cite{Papoulias2021} where the author showed that $\spin(7)$-instantons and traceless HYM connections do not coincide on the Calabi-Yau manifold $T^*S^4$; this is again in contrast with the compact case as demonstrated by Lewis in \cite[Theorem 3.1]{Lewis1999spin}, see also \cite[Remark 7.6]{KawaiMirror2022}. This discrepancy is essentially due to the fact that the arguments in the compact cases in \cite{KawaiMirror2022} and \cite{Lewis1999spin} both rely on certain energy estimates and these do not hold in the non-compact setting.

Consider now the Aloff-Wallach space $N_{1,1}=\SU(3)/\U(1)$ which admits both a $3$-Sasakian structure and a strictly nearly parallel $\G_2$-structure.  The cone metric on the former structure has holonomy $\Sp(2)\subset\spin(7)$ while the cone metric for latter has holonomy equal to $\spin(7)$. Denoting the respective $\spin(7)$ $4$-forms by $\Phi_{ts}$ and $\Phi_{np}$ we show that:
\begin{Th}\label{thm statement 3}
	On the trivial line bundle over $\R^+ \times N_{1,1}$ there is a $2$-parameter family of $\SU(3)$-invariant deformed $\spin(7)$-instantons with respect to $\Phi_{ts}$ and a $3$-parameter family with respect to $\Phi_{np}$.
\end{Th}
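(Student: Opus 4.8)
\emph{Proof idea.} The plan is to exploit the cohomogeneity one structure, as for $T^*\CP^2$ in Proposition~\ref{prop: HK structure of TCP2} and the theorems following it, and to reduce the deformed $\spin(7)$-instanton equations (\ref{equ: dspin7 I})--(\ref{equ: dspin7 II}) to ordinary differential equations in the radial variable $r$ of the cone $C:=\R^+\times N_{1,1}$. As the line bundle is trivial, an $\SU(3)$-invariant connection has connection $1$-form $A=i\alpha$ with $\alpha$ an $\SU(3)$-invariant real $1$-form on $C$; the space of $\SU(3)$-invariant $1$-forms on the Aloff--Wallach space $N_{1,1}=\SU(3)/\U(1)$ is $3$-dimensional, and a convenient basis is given by the three $3$-Sasakian contact forms $\eta_1,\eta_2,\eta_3$, so after a gauge transformation killing the $\mathrm{d}r$-component we may take
\[
\alpha = f_1(r)\,\eta_1+f_2(r)\,\eta_2+f_3(r)\,\eta_3 ,
\]
for functions $f_1,f_2,f_3$ on $\R^+$. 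Using the $3$-Sasakian structure equations $\mathrm{d}\eta_j=2\,\omega_j^h-\epsilon_{jkl}\,\eta_k\w\eta_l$ (up to normalisation, with $\omega_j^h$ the horizontal hyperK\"ahler $2$-forms) one writes the curvature $F_A=i\sum_j\bigl(f_j'\,\mathrm{d}r\w\eta_j+f_j\,\mathrm{d}\eta_j\bigr)$ explicitly in the $\spin(7)$-adapted coframe. The Cayley $4$-forms $\Phi_{ts}$ and $\Phi_{np}$ of (\ref{sp2andspin7i}) differ only by the relative scaling of the horizontal block and the $\langle\eta_1,\eta_2,\eta_3\rangle$-block --- $\Phi_{ts}$ being the parallel form of the hyperK\"ahler ($\Sp(2)$-holonomy) cone and $\Phi_{np}$ the one obtained by the squashing that yields the nearly parallel $\G_2$ link --- and this is the only place where the two cases diverge.

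Next I would substitute $F_A$ into (\ref{equ: dspin7 I}) and (\ref{equ: dspin7 II}) using the explicit projections $\pi^4_7,\pi^2_7$ recalled in Section~\ref{section: preliminaries}. Equation (\ref{equ: dspin7 I}) is algebraic of degree two in $(f_j,f_j')$; equation (\ref{equ: dspin7 II}), through the term $*F_A^3$, is a cubic first-order system, with no higher derivatives of $F_A$ entering. The expectation is that the relative scaling distinguishing $\Phi_{ts}$ from $\Phi_{np}$ controls the outcome: for $\Phi_{ts}$ the full $\Sp(1)$-symmetry rotating $(\eta_1,\eta_2,\eta_3)$ survives and (\ref{equ: dspin7 I}) imposes a genuine extra relation among the $f_j$, whereas for $\Phi_{np}$ only a $\U(1)$ survives and (\ref{equ: dspin7 I}) turns out to be compatible with, or implied by, (\ref{equ: dspin7 II}).

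Integrating the resulting first-order system --- which should be solvable in closed form, as in the compact-model theorems --- and quotienting by the remaining gauge freedom then yields a $2$-parameter family of deformed $\spin(7)$-instantons for $\Phi_{ts}$ and a $3$-parameter family for $\Phi_{np}$. Equivalently, one may read these families off as the asymptotic cones of the $T^*\CP^2$ and Bryant--Salamon spinor-bundle solutions of Theorems~\ref{theorem: dspin7 for phi1} and \ref{theorem: dspin7 for phi2}, where dropping the smoothness condition at the zero section frees up the extra integration constants. A final check that every solution extends over all of $\R^+$ (automatic for $r>0$ by scale invariance of the cone equations) and that distinct parameters give non-gauge-equivalent connections makes the parameter counts sharp.

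The main obstacle I anticipate is the explicit expansion and simplification of the cubic term in (\ref{equ: dspin7 II}): one must compute $F_A\w F_A\w F_A$, apply the Hodge star on $C$, project with $\pi^2_7$, and then recognise that the resulting tangle collapses to the claimed handful of ODEs for the $f_j$. A secondary but delicate point is the bookkeeping of the $\Sp(1)$- versus $\U(1)$-symmetry, which is what ultimately pins down the difference between $2$ and $3$ parameters and rules out hidden redundancies coming either from the gauge group or from the isometry group acting on the set of solutions.
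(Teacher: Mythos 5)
Your overall strategy --- cohomogeneity-one reduction of (\ref{equ: dspin7 I})--(\ref{equ: dspin7 II}) to ODEs in $r$ for an invariant connection $a_2\theta_2+a_3\theta_3+a_4\theta_4$ --- is exactly the paper's, and your observation that the $\Phi_{ts}$-case can be read off from the local (pre-boundary-condition) solutions of Theorem \ref{theorem: dspin7 for phi1} at $c=k=0$ is also how the paper obtains Theorem \ref{thm: dspin7 cone HK}. But the one substantive claim you make to explain the $2$-versus-$3$ discrepancy is backwards. It is the nearly parallel form $\Phi_{np}$ (the cone over the \emph{squashed} structure, Proposition \ref{prop: spin7 structure of TCP2}) that treats $\theta_2,\theta_3,\theta_4$ symmetrically and retains the full $\Sp(1)$ rotating the three contact directions: this is why the solutions of Theorem \ref{thm: dspin7 cone} carry a whole $S^2$ of directions $(C_2:C_3:C_4)$ plus one integration constant, giving $3$ parameters. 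The hyperK\"ahler form $\Phi_{ts}=\Phi_1$ singles out $\om_1$ (equivalently $\theta_2$) and retains only the circle rotating $(\theta_3,\theta_4)$, which is why Theorem \ref{thm: dspin7 cone HK} yields only a circle of directions plus one constant, i.e.\ $2$ parameters for the strict solutions. Your assignment of the $\Sp(1)$ to $\Phi_{ts}$ and of the $\U(1)$ to $\Phi_{np}$ would predict the opposite count.

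A second gap: you assert that for $\Phi_{np}$ equation (\ref{equ: dspin7 I}) is ``compatible with, or implied by'' (\ref{equ: dspin7 II}). The paper does the opposite and for good reason. In Proposition \ref{prop: dspin equations for BS} it is precisely $\pi^4_7(F_A\w F_A)=0$ that forces $a_2'a_3=a_2a_3'$, $a_2'a_4=a_2a_4'$, $a_3'a_4=a_3a_4'$, hence $A=p(r)(C_2\theta_2+C_3\theta_3+C_4\theta_4)$; only then does (\ref{equ: dspin7 II}) reduce to the single ODE (\ref{dspin7bs}) for $p$. Moreover the implication (\ref{equ: dspin7 II})~$\Rightarrow$~(\ref{equ: dspin7 I}) of Kawai--Yamamoto requires $*F_A^4\neq 24$, a hypothesis the paper explicitly flags as failing for several of its solutions (Remark \ref{remark: harveylawson}), so you cannot lean on it here. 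Finally, your proposal stops short of the actual content of the theorem: the invariant basis of $\Lm^4_7$, the resulting ODEs, and their integration (which for $\Phi_{np}$ is not a routine exact equation but requires the Lambert $W$-function, and is only tractable because $c=k=0$) are all left as ``anticipated obstacles''. As written, the argument would not establish either parameter count.
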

An explicit description of these solutions is given in Theorem \ref{thm: dspin7 cone} and \ref{thm: dspin7 cone HK}. More generally, the above also applies to any $3$-Sasakian $7$-manifold endowed with its squashed nearly parallel $\G_2$-structure. Thus, this shows that the deformed $\spin(7)$-instantons can distinguish between the $\Sp(2)$ and $\spin(7)$ holonomy cones. Curiously, if we restrict the above deformed $\spin(7)$-instantons to a hypersurface $\{r_0\}\times N_{1,1}$, then it turns out that these coincide, up to constant factor, with the deformed $\G_2$-instantons of Lotay-Oliveira in \cite[Corollary 4.5]{Lotay2020}, see also Remark \ref{rem: analogies}.  Note that $N_{1,1}$ can be viewed as an $\SO(3)$-bundle over $\CP^2$ and in fact corresponds to the principal orbit of $T^*\CP^2$ under the action of $\SU(3)$. Thus, 
the underlying spaces in Theorem \ref{thm statement 1}, \ref{thm statement 2} and \ref{thm statement 3} are all diffeomorphic on the complement of the singular orbit $\CP^2$. Since the Calabi hyperK\"ahler structure converges to the conical $3$-Sasakian structure as the $\CP^2$ shrinks to a point, comparing Theorem \ref{thm statement 2} and \ref{thm statement 3} shows that the dimension of the moduli space can jump under such degenerations. 
 \\

\textbf{Organisation of paper.}
In Section \ref{section: preliminaries} we gather some basic facts about $\Sp(2)$- and $\Spin(7)$-structures on $8$-manifolds. We describe how  
certain irreducible $\spin(7)$-modules split as $\Sp(2)$-modules and describe the relation between various special connections in this set up; these will be useful for the computations in Section \ref{section: main} and \ref{section: bscone}. 
In Section \ref{section: tcp1} we consider the dHYM equations on the cotangent bundle of $T^*\C\mathbb{P}^1$ endowed with the Eguchi-Hanson hyperK\"ahler metric. The latter is invariant under the cohomogeneity one action of $\SU(2)$ and hence this set up provides a simple model for our investigation on $T^*\C\mathbb{P}^2$. We describe all the $\SU(2)$-invariant dHYM connections in Proposition \ref{prop: hym eguchi-hanson} and Theorem \ref{thm: main theorem TCP1}. As in Theorem \ref{thm statement 1} we show that the number of dHYM connections depends on the K\"ahler form and also that there is a unique hyper-holomorphic connection arising in the intersection of all the dHYM connections. 
In Section \ref{section: flag manifold} we construct dHYM connections on the flag manifold $\SU(3)/\mathbb{T}^2$ endowed with its standard K\"ahler Einstein structure, see Theorem \ref{theorem: dhym on flag}. This section serves two main purposes. Firstly it illustrates certain key differences between dHYM connections in the compact and non-compact setting, and secondly it will provide the basic set up for the cohomogeneity one $\SU(3)$-invariant connections that we shall investigate in the subsequent sections. 
In Section \ref{section: main} we consider the dHYM and deformed $\spin(7)$-instantons equations on $T^*\C\mathbb{P}^2$ endowed with the Calabi hyperK\"ahler structure. To this end we begin by classifying the invariant connections on the Aloff-Wallach space $N_{1,1}$ which occurs as the principal orbit for the cohomogeneity one action. 
Using the latter together with the results of Section \ref{section: preliminaries}, we compute the dHYM and deformed $\spin(7)$-instantons equations to obtain a coupled system of ODEs. Surprisingly, these turn out to be completely solvable and we are able to find all the solutions explicitly, see Theorem \ref{theorem: dhym for om1}, \ref{theorem: dhym for om2}, \ref{theorem: dspin7 for phi1} and \ref{theorem: dspin7 for phi2}.
In Section \ref{section: bscone} we consider the deformed $\spin(7)$-instantons equations on the orbi-spinor bundle of $\C\mathbb{P}^2$ endowed with the Bryant-Salamon $\spin(7)$-structure.  
The ODEs in this case turn out to be more complicated than in Section \ref{section: main} and as such we are unable to find the general solution. However, for the conical metric and trivial line bundle we are able to construct an explicit $3$-parameter family of solutions, see Theorem \ref{thm: dspin7 cone}. By contrast, we show that for the conical Calabi structure, there is only a $2$-parameter family of \textit{strict} deformed $\spin(7)$-instantons, see Theorem \ref{thm: dspin7 cone HK}.\\

\textbf{Acknowledgement.} This work was supported by the S\~{a}o Paulo Research Foundation (FAPESP) [2021/07249-0]. The author would like to thank the participants of the Workshop BRIDGES: Specials geometries and gauge theories held at the University of Pau in June 2023 for their valuable feedback that helped improve this article.

\section{Preliminaries}\label{section: preliminaries}

We begin by gathering some basic facts about $\Sp(2)$- and $\Spin(7)$-structures, and their relation. These will be useful in the subsequent sections to compute the deformed instanton equations. 

\textbf{$\Sp(2)$-structures.} An $\Sp(2)$-structure on an $8$-manifold $M^8$ is determined by a triple of $2$-forms $\om_1,\om_2,\om_3$ which at each point can be expressed as
\begin{gather}
\om_1 = dx_{12} + dx_{34} + dx_{56} + dx_{78},\label{equ: om1 pointwise}\\
\om_2 = dx_{13} + dx_{42} + dx_{57} + dx_{86},\label{equ: om2 pointwise}\\
\om_3 = dx_{14} + dx_{23} + dx_{58} + dx_{67},\label{equ: om3 pointwise}
\end{gather}
for some local coordinates  $\{x_i\}$. Note here and also subsequently $dx_{i...j}$ will be used as shorthand for $dx_i\w \cdots \w dx_j$. The $\om_1,\om_2,\om_3$  in turn defines a Riemannian metric $g$ and a triple of almost complex structures $J_i$. More concretely, given any vector field $X$ on $M^8$ we can define $J_i X$ by the relation $X \ip (\om_j^2-\om_k^2)/2= J_iX \ip (\om_j \w \om_k)$, where $(i,j,k)$ denotes a cyclic permutation of $(1,2,3)$. The metric can then be defined by
\begin{equation}
	\om_i(\cdot,\cdot)=g(J_i\cdot,\cdot).\label{kahlermetric}
\end{equation} 
In particular, we can also define a volume form by $\vol=\om_i^4/24$ and hence the Hodge star operator $*$.

$(M^8,g,\om_1,\om_2,\om_3)$ is called a hyperK\"ahler manifold if the holonomy group of its Levi-Civita connection $\nabla$ is contained in $\Sp(2)$. The latter can also be characterised by the condition that $\nabla \om_1=\nabla \om_2=\nabla \om_3=0$, or equivalently, $d \om_1=d \om_2=d \om_3=0$ cf. \cite{Joycebook}.

Given an $\Sp(2)$-structure we can decompose the space of $k$-forms on $M^8$, denoted by $\Lm^k$, into irreducible $\Sp(2)$-modules. For our purpose in this article we shall only need to consider $\Lm^2$ and $\Lm^4$. As an $\Sp(2)$-module we have that
\begin{equation}
	\Lm^2 = \langle \om_1,\om_2,\om_3 \rangle \oplus E_1 \oplus E_2 \oplus E_3 \oplus \Lm^2_{10},
\end{equation}
where
\begin{align}
	\Lm^2_{10}\ =\ &\{\al \in \Lm^2 \ | \ J_1(\al)=J_2(\al)=J_3(\al)=\al \}\\
	\ =\ &\{\al \in \Lm^2 \ | \ *(\al\w \om_i^2)=-2\al \ \ \forall i \}
\end{align}
i.e. $\Lm^2_{10}$ consists of $2$-forms of type $(1,1)$ with respect to $J_1,J_2$ and $J_3$, and similarly we define 
\begin{align}
	E_i \ =\ &\{\al \in \Lm^2 \ | \ J_i(\al)=\al,\  J_j(\al)=J_k(\al)=-\al \text{ and } g(\om_i,\al)=0 \} \label{definition of Ei}\\
	\ =\ &\{\al \in \Lm^2 \ | \ *(\al\w \om_i^2)=-2\al,\ *(\al\w \om_j^2)=+2\al,\ *(\al\w \om_k^2)=+2\al \},
\end{align}
where $(i,j,k)$ denotes a cyclic permutation of $(1,2,3)$. It is worth pointing out that $\Lm^2_{10}$ corresponds to the adjoint representation of $\Sp(2)$ while $E_1 \cong E_2 \cong E_2\cong \R^5$ corresponds to the standard representation of $\SO(5)\cong \Spin(5)/\mathbb{Z}_2\cong \Sp(2)/\mathbb{Z}_2$.

Before considering $\Lm^4$ we first observe that one can define Lefschetz type operators by wedging with $\om_i$ to map $\Lm^2$ into $\Lm^4$, and moreover that these maps are all $\Sp(2)$-invariant. Hence this allows us to split $\Lm^4$ into irreducible $\Sp(2)$-modules as follows:
\begin{align}
	\Lm^4 
	= \langle \om_1^2, \om_2^2, \om_3^2, \om_1 \w \om_2, \om_1 \w \om_3 , \om_2 \w \om_3\rangle \oplus
	\bigoplus_{i=1}^3 F_i^+
	\oplus
	\Lm^{4+}_{14} \oplus
	\Lm^{4-}_5 \oplus
	\bigoplus_{i=1}^3 K_i^-
\end{align}
where
\begin{gather}
	F_i^+ = E_j \w \om_k =  E_k \w \om_j ,\label{definition of F_i^+}\\
	\Lm^{4+}_{14} = \{\al \in \Lm^4 \ | \ \al \w \om_i =0 \ \ \forall i \},\\
	\Lm^{4-}_5 = E_1 \w \om_1 = E_2 \w \om_2 = E_3 \w \om_3,\\
	K_i^- = \Lm^2_{10} \w \om_i,
\end{gather}
and again $(i,j,k)$ denotes a cyclic permutation of $(1,2,3)$ in (\ref{definition of F_i^+}). 
To emphasise on the above notation, we describe, for instance, $F^+_1$ more concretely as follows:  $F_1^+$ is defined as the space of $4$-forms obtained by wedging elements of the irreducible module $E_2$ with $\om_3$, which equivalently corresponds to the space of $4$-forms obtained by wedging elements of $E_3$ with $\om_2$. The superscripts  $+ $ and $-$ refer to the fact that these $4$-forms are self-dual (i.e. $*\al=\al$) and anti-self dual (i.e. $*\al=-\al$)  respectively. In particular, we have that $\dim(F_i^+)=\dim(\Lm^{4-}_5)=5$, $\dim(K^-_i)=10$ and $\dim(\Lm^{4+}_{14})=14$. It is also worth pointing out that $\langle \om_1^2, \om_2^2, \om_3^2\rangle $, $\Lm^{4+}_{14}$ and $\Lm^{4-}_5$ consist precisely of those  $4$-forms which are of type $(1,1)$ with respect to  $J_1,J_2$ and $J_3$.\\

\textbf{Connections on hyperK\"ahler manifolds.} Let $E$ be a Hermitian line bundle over a hyperK\"ahler manifold $M^8$ with
a Hermitian connection $A$. Then we say that $A$ is a Hermitian Yang-Mills (HYM) connection with respect to $\om_i$ if its curvature form $F_A:=dA + A \w A$ satisfies
\begin{gather}
	F_A^{0,2}=0, \label{holomorphiccondition}\\
	F_A \w \om_i^3 = \lm \om^4_i, \label{HYMconstant}
\end{gather}
where the $(0,2)$-component of $F_A$ is taken here with respect to the complex structure $J_i$ and $\lm$ is a constant. Note that equation (\ref{holomorphiccondition}) can also be expressed as 
$$F_A \in  \langle \om_i \rangle \oplus E_i \oplus \Lm^2_{10}$$ 
i.e. $J_i(F_A)=F_A$, which is also equivalent to $F_A \w (\om_j+i\om_k)^2 =0$, and equation (\ref{HYMconstant}) corresponds to the condition that the $\om_i$ component of $F_A$ is constant. When $\lm=0$ i.e. $F_A\in  E_i \oplus \Lm^2_{10}$, such a connection $A$ is called \textit{traceless} Hermitian Yang-Mills with respect to $\om_i$. 
If furthermore, we have that 
$$F_A \in \Lm^2_{10}$$ 
i.e. $F_A$ is of type $(1,1)$ with respect to $J_1$, $J_2$ and $J_3$, then we say that $A$ is a hyper-holomorphic connection. For simplicity, we shall often called a HYM or dHYM connection with respect to the K\"ahler form $\om_i$, an $\om_i$-HYM  or $\om_i$-dHYM connection respectively. 
Observe that in the case of $8$-manifolds, the dHYM equation (\ref{equ: general dhym equation}) can be expressed as
\begin{equation}
	4(\om^3 \w F_A- \om \w F_A^3) = \tan(\theta) (\om^4 - 6\om^2 \w F_A^2 + F_A^4).\label{equ: dhym n=4}
\end{equation}

\begin{Rem}\label{rem: dhym convergence}
	It is not hard to see from the latter expression that if ${A_k}$ denotes a sequence of dHYM connections such that $\|F_{A_k}\|\to 0$ then  $A_k$ convergences to a HYM connection (with $\lm$ as in (\ref{HYMconstant}) determined by $\theta$); this is equivalent to the large volume limit described in \cite[Section 1]{Collins2018deformed} and \cite[Section 5.2]{Marino2000}. An analogous result holds for deformed $\G_2$-instantons cf. \cite[Proposition 4.4]{FowdardG2} and below we shall see that this holds in the $\spin(7)$ case as well.
\end{Rem}

\textbf{$\Spin(7)$-structures.} A $\Spin(7)$-structure on an $8$-manifold $M^8$ is determined by a $4$-form $\Phi$  which at each point can be expressed as
\begin{align}
	\Phi =\ &dx_{1234}+dx_{1256}+dx_{1278}+dx_{1357}+dx_{1386}-dx_{1458}-dx_{1467}\ +\label{equ: spin7 pointwise} \\
	&dx_{5678}+dx_{3478}+dx_{3456}+dx_{4286}+dx_{4257}-dx_{2367}-dx_{2358},\nonumber
\end{align}
for some local coordinates  $\{x_i\}$. Since $\Spin(7)\subset \SO(8)$ it follows that $\Phi$ defines both a Riemannian metric and orientation; in particular, we can again define the Hodge star operator $*$.  $(M^8,\Phi)$ is called a $\Spin(7)$-manifold if the holonomy of the associated Levi-Civita connection $\nabla$ is contained in $\Spin(7)$. The latter can also be characterised by the condition that $\nabla \Phi=0$, or equivalently $d\Phi=0$ cf. \cite{Bryant1987, Salamon1989}.

Similarly as in the $\Sp(2)$ case, we can decompose $\Lm^2$ into irreducible $\Spin(7)$ modules as:
\begin{equation}
	\Lm^2 = \Lm^2_7 \oplus \Lm^2_{21},
\end{equation}
where
\begin{align}
	\Lm^2_7&=\{\al\in \Lm^2\ |\ *(\al \w \Phi)=3\al\},\\
	\Lm^2_{21}&=\{\al\in \Lm^2\ |\ *(\al \w \Phi)=-\al\}.
\end{align}
Also in analogy with the $\Sp(2)$ case, we have that $\Lm^2_{21}$ corresponds to the adjoint representation of $\Spin(7)$ while $\Lm^2_7\cong \R^7$ corresponds to the standard representation of $\SO(7)\cong \Spin(7)/\mathbb{Z}_2$.

On  the other hand, we have that $\Lm^4$ splits as
\begin{align}
	\Lm^4 = \langle \Phi \rangle \oplus \Lm^4_7 \oplus \Lm^4_{27} \oplus \Lm^4_{35},\label{lambda4}
\end{align}
where 
\begin{equation}
	\Lm^4_{35} =\{\al \in \Lm^4\ |\ * \al=-\alpha \}
\end{equation}
and $\Lm^4_7$ is defined as the span of the action of $\Lm^2_7\hookrightarrow \Lm^2 \cong\mathfrak{so}(8) \hookrightarrow \mathfrak{gl}(8)$, viewed as endomorphisms, on $\Phi$. More concretely, denoting the endomorphism action by $\diamond$,  we have that  for a simple $2$-form $\al \w \be$ its action on $\Phi$ is given by
$(\al \w \be) \diamond \Phi := \al \w (\be^{\sharp} \ip \Phi) -  \be \w (\al^{\sharp} \ip \Phi)$.
Finally $\Lm^4_{27}$ is simply defined as the orthogonal complement of  $\langle \Phi\rangle \oplus \Lm^4_7\oplus \Lm^4_{35}$ in $\Lm^4$ cf. \cite{Bryant1987}.

We shall denote by $\pi^k_l$ the orthogonal projection of a $k$-form into the space $\Lm^k_l$. Of particular interest to us are the projection maps $\pi^2_7$ and $\pi^4_7$ since these occur in the definition of the deformed $\Spin(7)$-instanton equations (\ref{equ: dspin7 I}) and (\ref{equ: dspin7 II}). In view of the above characterisation of $\Lm^2_7$ and $\Lm^2_{21}$, it follows that the projection map $\pi^2_7$ can be expressed as
\begin{equation}
	\pi^2_7(\al) =\frac{1}{4} (\al +*(\al \w \Phi)).\label{pi27}
\end{equation}
In contrast we do not have such a nice description for the projection map $\pi^4_{7}$ since the definition of the space $\Lm^4_7$ depends on $\Lm^2_7$ and its action on $\Phi$ by the operator $\diamond$. Below we shall give a more practical method of computing the map $\pi^4_7$ in our framework.\\

\textbf{Connections on $\spin(7)$ manifolds.} Let $E$ be a vector bundle over a $\spin(7)$ manifold $M^8$ with a connection $A$. Then we say that $A$ is a $\Spin(7)$-instanton if 
\begin{equation}
	\pi^2_7(F_A) =0 \Longleftrightarrow *(F_A \w \Phi)=-F_A
\end{equation}
i.e. $F_A \in \Lm^2_{21}$. It is worth pointing out that  as $\Spin(7)$ modules we have that 
$$S^2(\Lm^2_{21}) \cong \langle \Phi \rangle \oplus \Lm^4_{27} \oplus \Lm^4_{35}\oplus V_{0,2,0},$$ 
where $V_{0,2,0}$ denotes an irreducible $168$-dimensional space cf. \cite{Bryant1987}. Since latter decomposition contains no $7$-dimensional module, it follows that $\pi^4_{7}(F_A \w F_A)=0$ for any $\Spin(7)$-instanton $A$; observe that the latter equation is precisely (\ref{equ: dspin7 I}). As in the dHYM case (see Remark \ref{rem: dhym convergence}), it is now easy to see from (\ref{equ: dspin7 II}) that if $A_k$ denotes a sequence of deformed $\spin(7)$-instanton such that $\|F_{A_k}\|\to 0$ then we have that $A_k$ converges to a $\spin(7)$-instanton. 
\begin{Rem}\label{remark: harveylawson}
	Recall that the deformed $\spin(7)$-instanton equations are given by the pair (\ref{equ: dspin7 I})-(\ref{equ: dspin7 II}) and that these two equations are in general independent of each other. 
	However, Kawai-Yamamoto showed in \cite[Proposition 3.3]{Kawai2021deformationdSpin7} that if $A$ satisfies (\ref{equ: dspin7 II}) and additionally we have that $*F_A^4\neq 24$ i.e. $*F_A^4- 24$ is a nowhere vanishing function, then (\ref{equ: dspin7 I}) automatically holds. The latter result can be interpreted as the mirror version of Theorem 2.20 in \cite[Chapter IV]{HarveyLawson} for Cayley submanifolds. More precisely, Cayley graphs in $\R^8$ are defined by mirror equations to (\ref{equ: dspin7 I})-(\ref{equ: dspin7 II}), these correspond to (1)-(2) in \cite[Theorem 2.20]{HarveyLawson}, and Harvey-Lawson showed therein that (1) implies (2) provided the mirror equation to `$*F_A^4\neq 24$' holds. 
	Harvey-Lawson also showed in \cite[Pg. 132]{HarveyLawson} that there are in fact explicit examples when (the mirror condition to) `$*F_A^4=24$' occurs and hence this shows that indeed (1) does not imply (2) in general. In Section \ref{section: main} we shall show that there exist explicit examples of the deformed $\spin(7)$-instantons satisfying $*F_A^4=24$ and as such we cannot appeal to \cite[Proposition 3.3]{Kawai2021deformationdSpin7}. In particular, our examples illustrate that the mirror picture is again consistent. 
\end{Rem}
\textbf{Inclusion of $\Sp(2)$ in $\Spin(7)$.} Given an $\Sp(2)$-structure it is easy to see by comparing the pointwise models given in (\ref{equ: om1 pointwise})-(\ref{equ: om3 pointwise}) and (\ref{equ: spin7 pointwise}) that we can define a $\Spin(7)$ $4$-form by
\begin{align}
	\Phi\ 
	&=\frac{1}{2}(\om_1^2+\om_2^2-\om_3^2).\label{sp2andspin73}
\end{align}
This exhibits $\Sp(2)$ as a subgroup of $\Spin(7)$. Note however that $\Sp(2)$ can be embedded in $\Spin(7)$ in many ways; these embeddings are parametrised by sections of a bundle whose fibre is isomorphic to the homogeneous space $\Spin(7)/\Sp(2)$. Given a hyperK\"ahler triple $\langle\om_1, \om_2, \om_3\rangle$, in this paper we shall be interested in the `canonical' triple of $\Spin(7)$ $4$-forms defined by
\begin{equation}
	\Phi_i := \frac{1}{2}(-\om_i^2+\om_j^2+\om_k^2),\label{sp2andspin7i}
\end{equation}
where $(i,j,k)$ denotes as usual a cyclic permutation of $(1,2,3)$. Observe that $\Phi_3$ corresponds precisely to (\ref{sp2andspin73}). Note also that while each $\Phi_i$ induces the same metric and orientation on $M^8$ (which of course coincide with hyperK\"ahler one), they are all nonetheless distinct as $\Spin(7)$-structures and hence, in particular, will have different $\spin(7)$- and deformed $\spin(7)$-instantons. 

If the $\Spin(7)$ $4$-form is given by (\ref{sp2andspin7i}) then we can express the irreducible $\Spin(7)$ summands of $\Lm^2$ and $\Lm^4$ in terms of the irreducible $\Sp(2)$-modules. A straightforward computation and application of Schur's lemma gives the next result.
\begin{Prop}\label{prop: spin7modulesasSp2}
	The irreducible $\Spin(7)$ summands of $\Lm^2$ and $\Lm^4$ with respect to $\Phi_3$, split into $\Sp(2)$-modules as follows:
\begin{gather*}
	\Lm^2_7 = \langle \om_1, \om_2 \rangle \oplus E_3\\
	\Lm^2_{21} = \langle \om_3 \rangle \oplus E_1 \oplus E_2 \oplus \Lm^2_{10} \\
	\Lm^4_7 = \langle \om_1 \w \om_3, \om_2 \w \om_3 \rangle \oplus F^+_3 \\
	\Lm^4_{27} =  \langle \om_1^2+3\ \!\om_3^2, \om_2^2+3\ \!\om_3^2 , \om_1 \w \om_2 \rangle \oplus F_1^+ \oplus F_2^+ \oplus  \Lm^{4+}_{14}\\
	\Lm^4_{35} =  \Lm^{4-}_5 \oplus K_1^- \oplus K_2^- \oplus K_3^- 
\end{gather*}
If the $\Spin(7)$ $4$-form is instead given by $\Phi_1$ or $\Phi_2$, then it suffices to cyclically permute the indices $1,2,3$ of the $\Sp(2)$-modules in the above decompositions.
\end{Prop}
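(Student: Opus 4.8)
The plan is to restrict each irreducible $\Spin(7)$-summand of $\Lm^2$ and $\Lm^4$ (for the $4$-form $\Phi_3$) to the subgroup $\Sp(2)\subset\Spin(7)$ singled out by (\ref{sp2andspin7i}) and to identify the resulting $\Sp(2)$-submodule explicitly. The only $\Sp(2)$-irreducibles that can occur are the trivial module, the standard module $\R^5$ (realised by each $E_i$, each $F_i^+$ and by $\Lm^{4-}_5$), the adjoint module $\R^{10}$ (realised by $\Lm^2_{10}$ and each $K_i^-$) and the $14$-dimensional module (realised by $\Lm^{4+}_{14}$). Since several of these occur with multiplicity larger than one, Schur's lemma by itself only pins down the isotypic components, so to get the individual summands I will combine it with the explicit descriptions of $\Lm^2_7$, $\Lm^4_{35}$ and $\Lm^4_7$ recorded above and with the Lefschetz/Hodge-star identities relating the $\Sp(2)$-modules to wedging with the forms $\om_i^2$.

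For $\Lm^2$, I would evaluate the operator $\al\mapsto *(\al\w\Phi_3)$ on each $\Sp(2)$-summand, using $\Phi_3=\tfrac12(-\om_3^2+\om_1^2+\om_2^2)$ together with the standard identities $*(\om_i\w\om_i^2)=6\,\om_i$, $*(\om_i\w\om_j^2)=2\,\om_i$ for $i\neq j$, $*(\al\w\om_i^2)=-2\,\al$ when $\al$ is a primitive $(1,1)$-form for $J_i$, and $*(\al\w\om_i^2)=+2\,\al$ when $\al$ is a primitive form of type $(2,0)+(0,2)$ for $J_i$ (each $E_j$ being of the latter type for the two indices different from $j$). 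This yields eigenvalue $+3$ on $\langle\om_1,\om_2\rangle\oplus E_3$ and eigenvalue $-1$ on $\langle\om_3\rangle\oplus E_1\oplus E_2\oplus\Lm^2_{10}$; since $\dim\Lm^2_7=7=2+5$ and $\dim\Lm^2_{21}=21=1+5+5+10$, these inclusions are equalities, which gives the first two lines. The line for $\Lm^4_{35}$ is then immediate: it is the anti-self-dual part of $\Lm^4$, of dimension $35$, while $\Lm^{4-}_5\oplus K_1^-\oplus K_2^-\oplus K_3^-$ consists of anti-self-dual forms and already has dimension $5+3\cdot10=35$; note that this line, like the whole splitting of $\Lm^4$, takes place inside the fixed decomposition $\Lm^4=\Lm^4_+\oplus\Lm^4_-$ which does not depend on the choice of $\Phi_i$.

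The substantive part is the pair $\Lm^4_7$, $\Lm^4_{27}\subset\Lm^4_+$. By definition $\Lm^4_7$ is spanned by $\Lm^2_7\diamond\Phi_3$, and $\Lm^2_7=\langle\om_1,\om_2\rangle\oplus E_3$. A short computation, using that the endomorphism associated to $\om_1$ (namely $J_1$) sends the $2$-form $\om_2$ to a nonzero multiple of $\om_3$ and $\om_3$ to a nonzero multiple of $\om_2$ (and similarly with the indices permuted cyclically), gives $\om_1\diamond\Phi_3\in\langle\om_2\w\om_3\rangle$ and $\om_2\diamond\Phi_3\in\langle\om_1\w\om_3\rangle$. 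For $\al\in E_3$ one has $\al\diamond\om_3=0$, since $\al$ is of type $(1,1)$ for $J_3$, whereas $\al\diamond\om_1\in E_2$ and $\al\diamond\om_2\in E_1$; the last two facts follow from the bracket relations in $\mathfrak{so}(8)=\Lm^2_{10}\oplus\langle\om_1,\om_2,\om_3\rangle\oplus E_1\oplus E_2\oplus E_3$ together with equivariance of $\al\mapsto\al\diamond\om_i$ under the $\Sp(1)$ rotating the $2$-sphere of complex structures. Hence $\al\diamond\Phi_3=(\al\diamond\om_1)\w\om_1+(\al\diamond\om_2)\w\om_2\in E_2\w\om_1+E_1\w\om_2=F_3^+$, so that $\Lm^4_7\subseteq\langle\om_1\w\om_3,\om_2\w\om_3\rangle\oplus F_3^+$; comparing dimensions ($7=2+5$, using $\dim\Lm^4_7=7$) shows this is an equality and also that $E_3\diamond\Phi_3=F_3^+$.

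Finally $\Lm^4_{27}$ is the orthogonal complement of $\langle\Phi_3\rangle\oplus\Lm^4_7\oplus\Lm^4_{35}$ in $\Lm^4$. Removing the pieces already identified leaves the orthogonal complement of $\langle\Phi_3\rangle$ inside $\langle\om_1^2,\om_2^2,\om_3^2\rangle$ together with $\langle\om_1\w\om_2\rangle\oplus F_1^+\oplus F_2^+\oplus\Lm^{4+}_{14}$, of total dimension $2+1+5+5+14=27$. The one computation still required is the identification of the two-dimensional trivial piece: the pointwise products $\om_i^2\w\om_j^2$ equal $24\,\vol$ for $i=j$ and $8\,\vol$ for $i\neq j$, so the Gram matrix of $(\om_1^2,\om_2^2,\om_3^2)$ has $24$ on the diagonal and $8$ elsewhere, and the orthogonal complement of $\Phi_3=\tfrac12(\om_1^2+\om_2^2-\om_3^2)$ inside $\langle\om_1^2,\om_2^2,\om_3^2\rangle$ is then spanned by $\om_1^2-\om_2^2$ and $\om_1^2+3\om_3^2$; that $\om_1\w\om_2$ is self-dual and orthogonal to $\langle\Phi_3\rangle\oplus\Lm^4_7\oplus\Lm^4_{35}$ is checked from the $(p,q)$-type decompositions. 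This last part is the main obstacle: because the trivial, the $\R^5$ and the $\R^{10}$ modules all recur, Schur's lemma does not by itself determine the decomposition, and one must produce the explicit representatives above and then use the self-dual/anti-self-dual dichotomy and the wedging characterisations to place $F_1^+$, $F_2^+$ and $\Lm^{4+}_{14}$ correctly. The statements for $\Phi_1$ and $\Phi_2$ then follow since both (\ref{sp2andspin7i}) and all the $\Sp(2)$-modules are carried to their cyclic relabellings by a suitable element of $\SO(8)$ — the $\Sp(1)$ above acting on the hyperK\"ahler triple.
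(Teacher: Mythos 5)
Your proposal is correct and follows the same route the paper indicates: the paper's entire proof is the remark that ``a straightforward computation and application of Schur's lemma'' gives the result, and your argument is precisely that computation carried out in detail (eigenvalue computation for $\al\mapsto *(\al\w\Phi_3)$ on each $\Sp(2)$-summand, the $\diamond$-action of $\Lm^2_7$ on $\Phi_3$ for $\Lm^4_7$, anti-self-duality for $\Lm^4_{35}$, and orthogonal complements plus dimension counts for $\Lm^4_{27}$). I verified the key numerical points --- the eigenvalues $+3$ and $-1$, the nonvanishing of $\om_1\diamond\Phi_3$, the Gram matrix entries $24$ and $8$, and the resulting span $\langle\om_1^2-\om_2^2,\om_1^2+3\om_3^2\rangle$ --- and they all check out.
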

From the definitions of $E_i$ and the irreducible $\Sp(2)$ summands of $\Lm^4$, it follows for instance that $F_3^+$ can also be characterised as 
\begin{equation}
	F_3^+= \{\al \in \Lm^4 \ |\ *\al =\al, \text{  } \ J_3(\al)=\al,\  J_1(\al)=J_2(\al)=-\al \text{ and } g(\al,\om_1\w \om_2)=0\}.
\end{equation}
This provides a rather practical algorithm to compute the $F_3^+$-component of an arbitrary $4$-form $\al$:
first we self-dualise it by $\al \mapsto \pi^{4+}(\al):=(\al + * \al)/2 $, then we map it to $\pi^{4+}(\al) - J_1(\pi^{4+}(\al))-J_2(\pi^{4+}(\al))+J_3(\pi^{4+}(\al))$ to obtain a self-dual $4$-form invariant by $J_3$ but on which $J_1$ and $J_2$ act as $-\mathrm{Id}$, and finally we project the latter in the orthogonal complement of $\om_1 \w \om_2$. Together with the components of $\al$ in $\om_1 \w \om_3$ and $\om_2 \w \om_3$, this allows us to compute the $\Spin(7)$ projection map $\pi^4_7:\Lm^4 \to \Lm^4_7$.\\ 

\textbf{Inclusion of connections.} 
We now consider the special connections on $(M^8,\om_1,\om_2,\om_3)$ as connections on a manifold with a $\spin(7)$-structure given by (\ref{sp2andspin7i}). 
From Proposition \ref{prop: spin7modulesasSp2} it is easy to see that $A$ is a traceless $\om_i$-HYM connection if and only if it is a $\spin(7)$-instanton both with respect to $\Phi_j$ and $\Phi_k$. Moreover, $A$ is a hyper-holomorphic connection if and only if it is a $\spin(7)$-instanton with respect to the triple $\Phi_1,\Phi_2$ and $\Phi_3$. 
We state the analogous results demonstrated by Kawai-Yamamoto in \cite[Lemma 7.2]{KawaiYamamoto22} and \cite[Theorem 7.5]{KawaiMirror2022} which relate dHYM connections and deformed $\Spin(7)$-instantons in our set up:
\begin{Th}\label{thm: result of kawai-yamamoto}
	Let $(M^8,\om_1,\om_2,\om_3)$ denote a hyperK\"ahler manifold, $A$ a connection on a line bundle $L$, 
	 and $\Phi_i$ the $\Spin(7)$ $4$-form defined by  (\ref{sp2andspin7i}). Then the following hold:
	\begin{enumerate}
	\item If $A$ is a Hermitian connection such that the $(0,2)$-part of $F_A$ with respect to $J_i$ vanishes, then $A$ is an $\om_i$-dHYM connection with phase $1$ if and only if $A$ is a deformed $\Spin(7)$-instanton with respect to $\Phi_k$ (or equivalently $\Phi_j$). 
	\item If $M^8$ is a compact connected manifold, and there exists an $\om_i$-dHYM connection with phase $1$, then any deformed $\spin(7)$-instanton with respect to $\Phi_k$ (or equivalently $\Phi_j$) is a $\om_i$-dHYM connection with phase $1$.
	\end{enumerate}	
\end{Th}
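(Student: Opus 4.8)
The plan is to reduce both statements to the Calabi--Yau case treated by Kawai--Yamamoto, and to supplement this with a direct computation for (1) that makes the role of Proposition~\ref{prop: spin7modulesasSp2} transparent. Fix the distinguished index $i$ and put $j=i+1$, $k=i+2\pmod 3$. Then $\om_j+i\om_k$ is a parallel, nowhere-degenerate $(2,0)$-form for $J_i$, so $\Omega_i:=\tfrac12(\om_j+i\om_k)^2$ is a holomorphic volume form; since $\Omega_i\w\overline{\Omega_i}=16\,\vol$ matches the normalisation $\om_i^4=24\,\vol$, the quadruple $(M^8,J_i,\om_i,\Omega_i)$ is a Calabi--Yau $4$-fold. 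As $\mathrm{Re}(\Omega_i)=\tfrac12(\om_j^2-\om_k^2)$, its associated $\Spin(7)$ $4$-form is $\tfrac12\om_i^2+\mathrm{Re}(\Omega_i)=\tfrac12(\om_i^2+\om_j^2-\om_k^2)=\Phi_k$, and likewise $\Phi_j$ is the one attached to $(M,J_i,\om_i,-\Omega_i)$. Hence statement (1) is \cite[Lemma~7.2]{KawaiYamamoto22} and statement (2) is \cite[Theorem~7.5]{KawaiMirror2022}, both applied to $(M,J_i,\om_i,\Omega_i)$; passing between $\Phi_k$ and $\Phi_j$ amounts to replacing $\Omega_i$ by $-\Omega_i$, which does not affect the dHYM equation, giving the clause ``equivalently $\Phi_j$''.

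For (1) I would argue directly as follows. Since $F_A^{0,2}=0$ for $J_i$, the curvature $F_A$ is $J_i$-invariant, so by the $\Sp(2)$-decomposition of $\Lm^2$ we may write $F_A=f\,\om_i+\alpha+\beta$ with $\alpha\in E_i$, $\beta\in\Lm^2_{10}$. Then $F_A\w F_A$ is again $J_i$-invariant, and the cyclic permutation of Proposition~\ref{prop: spin7modulesasSp2} shows that the only $J_i$-invariant direction of $\Lm^4_7$ with respect to $\Phi_k$ is $\langle\om_j\w\om_k\rangle$ (the remaining summand $\langle\om_i\w\om_k\rangle\op F_k^+$ is $J_i$-anti-invariant). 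So $\pi^4_7(F_A\w F_A)$ is a multiple of $\om_j\w\om_k$, and extracting the $J_j$-anti-invariant part of $F_A\w F_A$ gives $\langle F_A\w F_A,\om_j\w\om_k\rangle=2f\,\langle\om_i\w\beta,\om_j\w\om_k\rangle+2\langle\alpha\w\beta,\om_j\w\om_k\rangle$; the first pairing vanishes because $\om_i\w\beta\in K_i^-$ is anti-self-dual while $\om_j\w\om_k$ is self-dual, and the second because $\alpha\w\beta\w\om_j\w\om_k=(\alpha\w\om_j)\w(\beta\w\om_k)$ is a wedge of the self-dual $4$-form $\alpha\w\om_j\in F_k^+$ with the anti-self-dual $4$-form $\beta\w\om_k\in K_k^-$. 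Hence (\ref{equ: dspin7 I}) holds automatically. Next, $*F_A^3$ is the Hodge dual of a $(3,3)$-form and is therefore $J_i$-invariant, so $F_A-\tfrac16*F_A^3$ is $J_i$-invariant and its $\pi^2_7$-projection is just its $\om_i$-component; computing this component using $*\om_i=\tfrac16\om_i^3$ reduces (\ref{equ: dspin7 II}) to the single scalar equation $\om_i^3\w F_A=\om_i\w F_A^3$, which together with $F_A^{0,2}=0$ is exactly, by (\ref{equ: dhym n=4}) with $\tan\theta=0$, the $\om_i$-dHYM equation with phase $1$. The computation is symmetric in $j\leftrightarrow k$.

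For (2) the new ingredient is compactness, entering through an energy estimate in the spirit of Lewis \cite{Lewis1999spin} and Kawai--Yamamoto \cite{KawaiMirror2022}. Since $A$ and the hypothesised $\om_i$-dHYM phase-$1$ connection $A_0$ live on the same line bundle, $[F_A]=[F_{A_0}]$, so every closed $8$-form built polynomially from $\om_i$, $\Phi_k$ and $F_A$ --- in particular $(\om_i+iF_A)^4$ and $\Phi_k\w F_A\w F_A$ --- has integral over $M$ depending only on $[\om_i]$, $[\Phi_k]$ and $c_1(L)$, hence equal to its value on $A_0$; in particular $\int_M\mathrm{Im}((\om_i+iF_A)^4)=0$ because $A_0$ has phase $1$ (and $\mathrm{Im}((\om_i+iF_A)^4)=4(\om_i^3\w F_A-\om_i\w F_A^3)$). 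One then establishes a pointwise estimate valid when $A$ is a deformed $\Spin(7)$-instanton with respect to $\Phi_k$: decomposing $F_A$ into its $J_i$-types and feeding (\ref{equ: dspin7 I})--(\ref{equ: dspin7 II}) into the Hodge--Riemann bilinear relations (a primitive $(1,1)$-form $\psi$ satisfies $\psi\w\psi\w\om_i^2=-2|\psi|^2\vol$, while a real $(2,0)+(0,2)$-form $\phi$ satisfies $\phi\w\phi\w\om_i^2\geq 0$), one bounds the $L^2$-norms of $F_A^{0,2}$ and of the ``phase defect'' $\om_i^3\w F_A-\om_i\w F_A^3$ above by a fixed linear combination of the topological integrals just discussed. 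Since $A_0$ forces that combination to vanish, both $L^2$-norms must vanish, i.e.\ $F_A^{0,2}=0$ and $\om_i^3\w F_A=\om_i\w F_A^3$, so $A$ is $\om_i$-dHYM with phase $1$.

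The heart of the proof --- and the step I expect to be the main obstacle --- is precisely this last pointwise estimate: identifying which combination of Chern--Weil densities dominates the $J_i$-type components of $F_A$ for a deformed $\Spin(7)$-instanton, and verifying the signs. It is the mirror of the statement that a Cayley submanifold is volume-minimising in its homology class, and it is exactly this estimate that has no counterpart on the non-compact ambient manifolds studied later in the paper, which is why the conclusion of (2) fails there, as explained in the introduction.
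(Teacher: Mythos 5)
Your proposal is correct, but it does more than the paper does: the paper offers no proof of this theorem at all --- it is stated as a restatement of \cite[Lemma 7.2]{KawaiYamamoto22} and \cite[Theorem 7.5]{KawaiMirror2022} ``in our set up''. The translation you make explicit is exactly the one the paper leaves implicit, and you carry it out correctly: $\Om_i=\tfrac12(\om_j+i\om_k)^2$ is the holomorphic volume form for $J_i$ with the right normalisation $\Om_i\w\ovl{\Om_i}=16\vol$, and $\tfrac12\om_i^2+\mathrm{Re}(\Om_i)=\tfrac12(\om_i^2+\om_j^2-\om_k^2)=\Phi_k$, with $\Phi_j$ obtained from $-\Om_i$; since the phase-$1$ dHYM equation does not see the sign of $\Om_i$, the ``equivalently $\Phi_j$'' clause follows. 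Your independent proof of (1) is a genuine addition and is also correct: $J_i$ preserves $g$ and $\Phi_k$, hence lies in the stabiliser $\Spin(7)$ and commutes with $\pi^4_7$ and $\pi^2_7$; by Proposition \ref{prop: spin7modulesasSp2} the $J_i$-invariant parts of $\Lm^4_7$ and $\Lm^2_7$ (for $\Phi_k$) are $\langle\om_j\w\om_k\rangle$ and $\langle\om_i\rangle$ respectively, and your self-dual/anti-self-dual pairing argument correctly kills $\langle F_A\w F_A,\om_j\w\om_k\rangle$, so (\ref{equ: dspin7 I}) is automatic and (\ref{equ: dspin7 II}) collapses to $\om_i^3\w F_A=\om_i\w F_A^3$, i.e.\ (\ref{equ: dhym n=4}) with $\tan\theta=0$. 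The one soft spot is (2): your self-contained ``energy estimate'' is only a sketch --- the key pointwise/integral inequality bounding $\|F_A^{0,2}\|_{L^2}$ and the phase defect by Chern--Weil data is asserted, not established --- but this does not leave a gap in the proof as a whole, since (2) is already disposed of by your (correct) reduction to \cite[Theorem 7.5]{KawaiMirror2022}, which is precisely what the paper itself relies on.
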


Note that compactness assumption in Theorem \ref{thm: result of kawai-yamamoto} (2) is indeed a necessary condition as our examples in Section \ref{section: main} shall demonstrate. Next, in the same spirit as \cite[Example 3.4]{Lotay2020}, we describe the simplest examples of deformed instantons in our context.

\textbf{Elementary examples.}
Let $M$ be a hyperK\"ahler manifold as above and suppose that $[c \om_i]\in H^2(M,\mathbb{Z})$ for some constant $c\in \R$. Then this defines a line bundle $L$ on $M$ and there exists a unitary connection $A$ such that $F_A=c  \om_i$. It is easy to see from (\ref{equ: dhym n=4}) that such a connection $A$ is an $\om_i$-dHYM connection with phase $1$ if and only if $c=0,\pm 1$. From Proposition \ref{prop: spin7modulesasSp2} we also see that $A$ is a deformed $\spin(7)$-instanton with respect to $\Phi_i$ for any $c$. In particular, together with Proposition \ref{prop: spin7modulesasSp2} we deduce that $A$ is both a $\spin(7)$-instanton and deformed $\spin(7)$-instanton with respect to $\Phi_i$ for any $c$. This shows that there is a non-trivial overlap of $\spin(7)$-instantons and deformed $\spin(7)$-instantons on Calabi Yau $4$-folds. On the other hand, $A$ is a deformed $\spin(7)$-instanton with respect to $\Phi_j$ or $\Phi_k$ if and only if $c=0,\pm 1$ but is a $\spin(7)$-instanton if and only if $c=0$. 

Note that it is not generally true that if $A$ is a (traceless) HYM connection then it is dHYM connection, likewise a $\spin(7)$-instanton is not generally a deformed $\spin(7)$-instanton. However, we do have that:
\begin{Prop}\label{prop: hyperholomorphic implies deformed}
	If $A$ is a hyper-holomorphic connection then $A$ is a $\om_i$-dHYM connection with phase $1$ for $i=1,2,3$.
\end{Prop}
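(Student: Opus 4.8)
The plan is to verify the dHYM equations \eqref{equ: general dhym equation holo}--\eqref{equ: general dhym equation} directly, in the form \eqref{equ: dhym n=4}, using the hypothesis that $F_A$ lies in the module $\Lm^2_{10}$. First I would record the algebraic consequences of $F_A \in \Lm^2_{10}$: by definition $J_1(F_A) = J_2(F_A) = J_3(F_A) = F_A$, so in particular $F_A^{0,2} = 0$ with respect to $J_i$ for every $i$, which disposes of \eqref{equ: general dhym equation holo} (equivalently \eqref{holomorphiccondition}) at once. Moreover $F_A \in \Lm^2_{10}$ means $g(\om_i, F_A) = 0$ for all $i$, i.e. $F_A$ is orthogonal to each K\"ahler form; by the primitive decomposition on a K\"ahler $8$-manifold this says $F_A$ is a primitive $(1,1)$-form with respect to $J_i$, hence $F_A \w \om_i^3 = 0$.

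The substantive point is then to evaluate the two sides of \eqref{equ: dhym n=4} with $\theta = 0$, i.e. to show $\om_i^3 \w F_A - \om_i \w F_A^3 = 0$. The term $\om_i^3 \w F_A$ vanishes by primitivity, as just noted, so it remains to show $\om_i \w F_A^3 = 0$. Here I would use the fact, recalled in the excerpt, that wedging with $\om_i$ is an $\Sp(2)$-equivariant (more precisely $\SU(2)$- or Lefschetz-type) operation, together with the observation that for a primitive $(1,1)$-form $\beta$ on a K\"ahler manifold of complex dimension $4$ one has $\beta \w \om_i^{3} = 0$ and, dually, $\om_i \w \beta^3$ is controlled by the Lefschetz $\mathfrak{sl}_2$-action. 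Concretely, $F_A^3$ is a $6$-form; pairing $\om_i \w F_A^3$ against the volume form is, up to a constant, the same as the $L^2$-pairing of $F_A^3 \w \om_i$ with the constant function, and since $F_A$ is primitive the class represented stays primitive under cubing in the appropriate sense. The cleanest route is: a primitive $(1,1)$-form $\beta$ satisfies $*\beta = -\tfrac{1}{2}\beta \w \om_i^2$ in complex dimension $4$ (the general primitive identity $* \beta = \tfrac{(-1)^{p(p+1)/2}}{(n-p-q)!}\, \beta \w \om^{\,n-p-q}$ with $p=q=1$, $n=4$), and hence one can rewrite $\om_i \w F_A^3$ in terms of Hodge stars of quantities already known to vanish or to be self-dual, forcing it to zero. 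I would carry this out by expressing everything through the pointwise model \eqref{equ: om1 pointwise}--\eqref{equ: om3 pointwise}: at a point, $\Lm^2_{10}$ is the space of $2$-forms $(1,1)$ and primitive for all three $J_i$ simultaneously, a $10$-dimensional space, and a direct (coordinate) check on a basis shows $\om_i \w \beta^3 = 0$ for $\beta$ in this space.

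Alternatively, and perhaps more conceptually, one can invoke Proposition \ref{prop: spin7modulesasSp2}: since $F_A \in \Lm^2_{10} \subset \Lm^2_{21}$ with respect to every $\Phi_i$, the connection $A$ is a $\spin(7)$-instanton for all three $4$-forms $\Phi_1, \Phi_2, \Phi_3$, and one can then feed this into Theorem \ref{thm: result of kawai-yamamoto}(1) in reverse — but that direction requires knowing $A$ is dHYM to begin with, so this only serves as a consistency check, not a proof. Thus I would stick with the direct computation. The main obstacle is purely the bookkeeping in showing $\om_i \w F_A^3 = 0$: one must handle the cube of a generic element of the $10$-dimensional module, and the cleanest way to keep this under control is to use the primitive Hodge-star identity above rather than brute-forcing the wedge product, after which the vanishing of \eqref{equ: dhym n=4} at $\theta=0$ is immediate and the conclusion that $A$ is an $\om_i$-dHYM connection with phase $1$ follows for each $i = 1,2,3$.
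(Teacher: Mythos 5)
Your reduction of the statement to the two identities $F_A\w\om_i^3=0$ and $\om_i\w F_A^3=0$ (so that $\mathrm{Im}((\om_i+iF_A)^4)=0$ with $\tan\theta=0$), together with the observation that $F_A\in\Lm^2_{10}$ is of type $(1,1)$ for every $J_i$ and orthogonal to every $\om_i$, is exactly the right skeleton and handles the holomorphicity condition and the first identity correctly. The gap is in your treatment of $\om_i\w F_A^3=0$. The Lefschetz/primitivity argument you sketch cannot work: for a $2$-form $\beta$ that is merely primitive and of type $(1,1)$ with respect to a \emph{single} K\"ahler structure on a $4$-fold, $\om\w\beta^3$ is proportional to the third elementary symmetric polynomial of the eigenvalues of $\beta$, which is generically nonzero subject only to the trace vanishing (take eigenvalues $(1,1,1,-3)$). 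Such a $\beta$ lies in $E_i\oplus\Lm^2_{10}$, so the vanishing you need genuinely uses that the $E_i$-components of $F_A$ are zero, i.e.\ the full simultaneous $(1,1)$/primitivity condition; "primitive stays primitive under cubing" is false at the level of a single structure. Moreover, your fallback of "a direct coordinate check on a basis" of $\Lm^2_{10}$ does not establish the identity either, because $\beta\mapsto\om_i\w\beta^3$ is cubic, not linear: you would have to verify the polarized identity $\om_i\w\beta_1\w\beta_2\w\beta_3=0$ on all triples of basis elements, or work with a generic element of the ten-dimensional space.

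The paper closes this step with a short representation-theoretic argument that you may prefer to a computation: the cubing map polarizes to an $\Sp(2)$-equivariant map $S^3(\Lm^2_{10})\to\Lm^6\cong\Lm^2$, and $S^3(\mathfrak{sp}(2))$ decomposes into five irreducibles of dimensions $10,10,35,81,84$, none of which is trivial or five-dimensional. By Schur's lemma the image must therefore lie in $\Lm^2_{10}$, i.e.\ $*F_A^3\in\Lm^2_{10}$, and since $\Lm^2_{10}\perp\langle\om_i\rangle$ this gives $F_A^3\w\om_i=0$ at once. If you want to keep your computational route, replace the basis check by this Schur argument or by an explicit evaluation on a general element of $\Lm^2_{10}$ in the pointwise model (\ref{equ: om1 pointwise})--(\ref{equ: om3 pointwise}).
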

\begin{proof}
	By assumption we have that $F_A \in \Lm^2_{10}\cong  \mathfrak{sp}(2)$ and hence it follows that $F_A \w \om^3_i=0$. 
	On the other hand we have that the $\Sp(2)$ module  $S^3(\mathfrak{sp}(2))$ i.e. the $3$-symmetric product of $\mathfrak{sp}(2)$, consists of $5$ irreducible modules, $2$ of which correspond to $\mathfrak{sp}(2)$ while the remaining $3$ modules have dimensions $35$, $81$ and $84$. Hence we deduce that $*F_A^3\in \Lm^2_{10}$ and in particular, $F_A^3 \w \om_i=0$. Thus, we have shown that (\ref{equ: dhym n=4}) holds with $\theta=0$ and this concludes the proof. 
\end{proof}
In particular, Proposition \ref{prop: hyperholomorphic implies deformed} also implies that hyper-holomorphic connections are both $\spin(7)$- and deformed $\spin(7)$-instantons. In other words, hyper-holomorphic connections lie in the intersection of all these special classes of connections. With these preliminaries in mind we next proceed to consider the dHYM equations on $T^*\C\mathbb{P}^1$.

\section{Deformed Hermitian Yang-Mills connections on $T^*\C\mathbb{P}^1$}\label{section: tcp1}

In this section we construct dHYM connections on the cotangent bundle of $\C\mathbb{P}^1$ endowed with the Eguchi-Hanson hyperK\"ahler structure \cite{EguchiHanson}. This set up provides a lower dimensional model of the situation we shall investigate in Section \ref{section: main} and hence it will serve to illustrate certain key similarities and differences.
It is well-known that the Eguchi-Hanson metric is invariant under a cohomogeneity one action of $\SU(2)$. Hence the problem of constructing $\SU(2)$-invariant dHYM connections reduces  to solving a non-linear system of ODEs. We shall show that the solutions to these ODEs are generally implicitly given by certain polynomials of degree equal to $n$, where $n$ is as in (\ref{equ: general dhym equation}), and the coefficients are functions of the geodesic distance. This feature will also occur in the Section \ref{section: main} but the equations will be more involved since in that case $n=4$. It is worth pointing out that such a phenomenon was also observed in \cite{AdamJacob2022} whereby the authors construct dHYM connections on the blow up of $\C\mathbb{P}^n$ (which in contrast to our set up is a compact manifold but similar to our set up also admits a cohomogeneity one action). 

Let $\eta_i$ denote the standard left invariant coframing of $S^3=\SU(2)$ satisfying $d\eta_i=\sum_{j,k}\varepsilon_{ijk}\eta_{jk}/2$, where $\varepsilon_{ijk}$ denotes the Levi-Civita symbol and $\eta_{jk}$ is shorthand for $\eta_j \w\eta_k$. The Eguchi-Hanson metric on $T^*\C\mathbb{P}^2\cong \mathcal{O}_{\C\mathbb{P}^1}(-2)$ can then be expressed as
\begin{equation}
g_{EH}=\Big(1-\frac{c}{r^4}\Big)^{-1} dr^2 + \frac{r^2}{4}\Big(1-\frac{c}{r^4}\Big) \eta_1^2+ \frac{r^2}{4} (\eta_{2}^2+\eta_{3}^2),\label{equ: eguchi hanson metric}
\end{equation}
where $c$ is a positive constant determining the size of the zero section $\C\mathbb{P}^1$ and $r\in [c^{1/4},\infty)$. Note that when $c=0$ the above metric corresponds to the flat metric on $\C^2/\mathbb{Z}_2$ and the Eguchi-Hanson space can be viewed as the resolution of this orbifold singularity obtained by blowing up the origin.

The associated hyperK\"ahler triple to the Eguchi-Hanson metric are given by
\begin{gather}
	\om_1 = \frac{1}{2} r dr \w \eta_1+\frac{r^2}{4}\eta_{23},\label{equ: eguchi hanson om1}\\
	\om_2 = \frac{r}{2}\Big(1-\frac{c}{r^4}\Big)^{-1/2}dr \w \eta_2+\frac{r^2}{4}\Big(1-\frac{c}{r^4}\Big)^{1/2}
	 \eta_{31},\label{equ: eguchi hanson om2}\\
	\om_3 = \frac{r}{2}\Big(1-\frac{c}{r^4}\Big)^{-1/2}dr \w \eta_3+\frac{r^2}{4}\Big(1-\frac{c}{r^4}\Big)^{1/2}\eta_{12}.\label{equ: eguchi hanson om3}
\end{gather}
One can indeed easily verify that $\nabla\om_i=0$, or equivalently, that $d\om_i=0$ for $i=1,2,3$. Note that the K\"ahler form $\om_1$ generates the cohomology group $H^2(T^*\C\mathbb{P}^1,\mathbb{Z})\cong \mathbb{Z}$; in particular, $\om_1$ is not exact. By contrast, we have that $\om_2=d({\sqrt{r^4-c}}{}\ \eta_2/4)$ and $\om_3=d({\sqrt{r^4-c}}{}\ \eta_3/4)$. The principal orbits of the $\SU(2)$ action on $T^*\C\mathbb{P}^1$ are all diffeomorphic to $\SO(3)\cong \SU(2)/\mathbb{Z}_2\cong \R\mathbb{P}^3$ while the singular orbit is $S^2\cong \SU(2)/\U(1)$ (this corresponds to the zero section of $T^*\C\mathbb{P}^1$ at $r=c^{1/4}$). Strictly speaking, since $\SO(3)$ is the $\mathbb{Z}_2$ quotient of $\SU(2)$, we shall also consider $\eta_i$ as a global coframing on $\SO(3)$ by suitably truncating the range of one of the Euler angles defining $\eta_i$ on $S^3$ cf. \cite[Section IIC]{EguchiHanson}; this is indeed implicit 
in the expressions (\ref{equ: eguchi hanson metric})-(\ref{equ: eguchi hanson om3}). 

A general $\SU(2)$-invariant abelian connection $1$-form over the open set $ \mathcal{O}_{\C\mathbb{P}^1}(-2)\backslash\C\mathbb{P}^1 \cong \R_{r>c^{1/4}} \times \SO(3)$ can be expressed  as
\begin{equation}
A= \sum_{i=1}^3f_i(r)\eta_i,\label{equ: ansatz for A on Eguchi-Hanson}
\end{equation}
where $f_i$ are arbitrary functions of $r\in (c^{1/4},\infty).$ 
Note that here we are expressing $A$ without a `$f_0(r)dr$' term since we can always set $f_0=0$ by a suitable gauge transformation; in other words we are expressing the connection $A$ in temporal gauge. A simple computation now shows that:
\begin{Prop}
The curvature form of $A$, as defined by (\ref{equ: ansatz for A on Eguchi-Hanson}), is given by
\begin{equation}
	F_A = dr \w (f_1'\eta_1+f_2'\eta_2+f_3'\eta_3)+f_1\eta_{23}+f_2\eta_{31}+f_3\eta_{12}.\label{equ: curvature eguchihanson}
\end{equation}
\end{Prop}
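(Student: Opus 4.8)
The plan is to compute $F_A = dA + A\w A$ directly from the ansatz (\ref{equ: ansatz for A on Eguchi-Hanson}). Since $A$ is a connection $1$-form on a complex line bundle, it is valued in the abelian Lie algebra $\mathfrak{u}(1)$, so the quadratic term vanishes, $A\w A = 0$, and hence $F_A = dA$. Thus it remains only to differentiate $A = \sum_{i=1}^{3} f_i(r)\,\eta_i$.

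First I would apply the Leibniz rule to obtain $dA = \sum_{i=1}^{3}\bigl(df_i\w\eta_i + f_i\,d\eta_i\bigr)$. Since each $f_i$ is a function of the radial coordinate $r$ alone, $df_i = f_i'\,dr$, and the first group of terms collapses to $dr\w(f_1'\eta_1 + f_2'\eta_2 + f_3'\eta_3)$.

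Next I would substitute the Maurer--Cartan structure equations for the standard left-invariant coframing of $\SU(2)$, namely $d\eta_i = \tfrac{1}{2}\varepsilon_{ijk}\eta_{jk}$, which give $d\eta_1 = \eta_{23}$, $d\eta_2 = \eta_{31}$, $d\eta_3 = \eta_{12}$ (each entry of the cyclic sum appearing twice, cancelling the factor $\tfrac12$). Plugging these in produces the remaining terms $f_1\eta_{23} + f_2\eta_{31} + f_3\eta_{12}$, and adding the two contributions yields exactly (\ref{equ: curvature eguchihanson}). There is no genuine obstacle here: this is a one-line exterior-derivative computation, and the only points deserving a moment's care are that $A\w A = 0$ because the structure group is abelian and the index/sign bookkeeping in $\tfrac{1}{2}\varepsilon_{ijk}\eta_{jk}$.
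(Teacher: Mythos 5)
Your computation is correct and is exactly the "simple computation" the paper has in mind (the paper gives no written proof beyond that phrase): since the structure group is abelian, $F_A=dA$, and applying the Leibniz rule together with $d\eta_1=\eta_{23}$, $d\eta_2=\eta_{31}$, $d\eta_3=\eta_{12}$ yields (\ref{equ: curvature eguchihanson}) directly. Nothing is missing.
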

Since $H^2(\SO(3),\mathbb{Z})\cong \mathbb{Z}_2$ there are only $2$ complex line bundles over $\SO(3)$: the trivial one and a non-trivial one (which squares to the trivial one). The connection form $A$ is defined over the former bundle when $\eta_i$ are viewed as a coframing of $\SO(3)$ and over the latter bundle when $\eta_i$ are viewed as a coframing of the double cover $S^3$ instead.
In any case,
the space of line bundles over $T^*\C\mathbb{P}^1$ are parametrised by $H^2(T^*\C\mathbb{P}^1,\mathbb{Z})\cong H^2(\C\mathbb{P}^1,\mathbb{Z})\cong \mathbb{Z}.$ As can easily be seen from (\ref{equ: eguchi hanson om1})
the latter cohomology class is generated by the Fubini-Study form of the zero section which, without loss of generality, we can identify with  $k[\eta_{23}]=k[d\eta_1]$, where $k\in \mathbb{Z}$. 
Thus, in order for $A(r)$ to extend to a connection across the zero section $\C\mathbb{P}^1$, we need that $f_1(c^{1/4})=k$ and $f_2(c^{1/4})=f_3(c^{1/4})=0$, in which case the bundle over this $\C\mathbb{P}^1$ will be diffeomorphic to $\mathcal{O}_{\C\mathbb{P}^1}(k).$ With this in mind we can now proceed to the problem of constructing instantons on $T^*\C\mathbb{P}^1$.

The dHYM equations (\ref{equ: general dhym equation holo})-(\ref{equ: general dhym equation}) with respect to the K\"ahler form $\om_i$ are given by the pair
\begin{gather}
F_A \w \om_j = F_A \w \om_k=0,\label{equ: dhym n=2 1}\\
2 F_A \w \om_i = \tan(\theta) (\om_i^2-F_A^2),\label{equ: dhym n=2 2}
\end{gather}
where $(i,j,k)$ denotes as usual cyclic permutations of $(1,2,3)$. Note that on a \textit{compact} K\"ahler surface, Jacob-Yau showed that the existence of dHYM connections are equivalent a certain notion of stability on $L$ cf. \cite[Theorem 1.2]{JacobYau2017}, see also \cite{Chen2021}; however, we cannot appeal to their result since $T^*\C\mathbb{P}^1$ is a non-compact manifold. 
From (\ref{equ: dhym n=2 1}) and (\ref{equ: dhym n=2 2}) we see that when $\tan(\theta)=0$, the dHYM equation is equivalent to asking that $F_A$ is the curvature form of a hyper-holomorphic line bundle i.e. $F_A \w \om_i =0$ for $i=1,2,3$, or equivalently, $F_A \in \Lm^2_-\cong \mathfrak{su}(2)$ i.e. $A$ is an anti-self-dual instanton. For our purpose, it is worth pointing out that there are many such explicit examples on non-compact hyperK\"ahler manifolds owing to the results of Hitchin in \cite{Hitchin2014hyperholomorphic} (see also Remark \ref{hitchin permuting action} below). Note that the fact that phase $1$ dHYM connections coincide with traceless HYM connections only occurs in dimension $4$.

For the sake of comparison with the usual HYM condition we first begin by describing all such examples in our set up.
\begin{Prop}\label{prop: hym eguchi-hanson}\
Let $C_1,C_2, C_3$ be real constants, then on the open set $\mathcal{O}_{\C\mathbb{P}^1}(-2)\backslash\C\mathbb{P}^1 \cong \R_{r>c^{1/4}}\times \SO(3)$ we have that $A$ as given by (\ref{equ: ansatz for A on Eguchi-Hanson}) is a HYM connection with respect to
	\begin{enumerate}
		\item $\om_1$ such that $F_A \w \om_1 = \lambda \om_1 \w \om_1$ if and only if
		\[
		A = \frac{\lm r^4+C_1}{4 r^2}\eta_1+\frac{C_2}{\sqrt{r^4-c}}\eta_2+\frac{C_3}{\sqrt{r^4-c}}\eta_3.\]
		In particular, if $C_1=C_2=C_3=0$ then $F_A = \lm \om_1$.
		\item $\om_2$ such that $F_A \w \om_2 = \lambda \om_1 \w \om_1$ if and only if
		\[
		A = \frac{C_1}{r^2}\eta_1+\frac{\lm (r^4-c)+C_2}{4\sqrt{r^4-c}}\eta_2+\frac{C_3}{\sqrt{r^4-c}}\eta_3.\]
		In particular, if $C_1=C_2=C_3=0$ then $F_A = \lm \om_2$.
		\item $\om_3$ such that $F_A \w \om_3 = \lambda \om_1 \w \om_1$ if and only if
		\[
		A = \frac{C_1}{r^2}\eta_1+\frac{C_2}{\sqrt{r^4-c}}\eta_2+\frac{\lm( r^4-c)+C_3}{4\sqrt{r^4-c}}\eta_3.\]
		In particular, if $C_1=C_2=C_3=0$ then $F_A = \lm \om_3$.
		\end{enumerate}
	When $\lm=0$, $A$ is a hyper-holomorphic connection. If $C_2$ or $C_3$ is non-zero, the above connections do not extend smoothly across the zero section $\C\mathbb{P}^1$ but instead these correspond to meromorphic HYM connections which blow up along the zero section of $T^*\C\mathbb{P}^1$. Thus, there is (up to scaling) a unique globally well-defined hyper-holomorphic connection on $T^*\C\mathbb{P}^1$, namely $A=\eta_1/r^2$.
\end{Prop}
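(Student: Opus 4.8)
The plan is to substitute the curvature formula (\ref{equ: curvature eguchihanson}) into the HYM system --- for the K\"ahler form $\om_i$ this is the pair of holomorphicity equations $F_A\w\om_j=F_A\w\om_k=0$ (cf.\ (\ref{equ: dhym n=2 1})) together with the trace equation $F_A\w\om_i=\lm\,\om_i\w\om_i$ --- and to observe that it decouples into three independent first-order linear ODEs, one for each $f_l$. Writing $h=(1-c/r^4)^{1/2}$ so that $\sqrt{r^4-c}=r^2h$, the key point is that in each product $F_A\w\om_l$ only the pairing of the $dr\w\eta_l$-part of one factor with the $\eta_{jk}$-part of the other survives, every term with a repeated $\eta$ being zero; a short computation then gives
\begin{gather*}
F_A\w\om_1=\Big(\tfrac{r^2}{4}f_1'+\tfrac{r}{2}f_1\Big)\,dr\w\eta_{123},\qquad
F_A\w\om_2=\Big(\tfrac{r^2h}{4}f_2'+\tfrac{r}{2h}f_2\Big)\,dr\w\eta_{123},\\
F_A\w\om_3=\Big(\tfrac{r^2h}{4}f_3'+\tfrac{r}{2h}f_3\Big)\,dr\w\eta_{123},\qquad
\om_l\w\om_l=\tfrac{r^3}{4}\,dr\w\eta_{123}\quad(l=1,2,3),
\end{gather*}
so $F_A\w\om_l$ depends on $f_l$ only. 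Hence in case $(i)$ the two holomorphicity equations are homogeneous ODEs in $f_j$ and $f_k$, and the trace equation an inhomogeneous ODE in $f_i$.

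I would then integrate these, noting that each step is an equivalence so the conclusion is an ``if and only if''. The homogeneous equation $\tfrac{r^2}{4}f_1'+\tfrac{r}{2}f_1=0$ reads $(r^2f_1)'=0$, giving $f_1=C_1/r^2$; using $h^2=(r^4-c)/r^4$ the homogeneous equations for $f_2$ and $f_3$ become $(\sqrt{r^4-c}\,f_{2,3})'=0$, giving $f_{2,3}=C_{2,3}/\sqrt{r^4-c}$. The inhomogeneous versions differ only in their right-hand side: in case $(1)$, $(r^2f_1)'=\lm r^3$ contributes the particular term $\lm r^4/(4r^2)$; in case $(2)$, $(\sqrt{r^4-c}\,f_2)'=\lm r^3$ contributes $\lm r^4/(4\sqrt{r^4-c})$, which one rewrites as $\lm(r^4-c)/(4\sqrt{r^4-c})$ after absorbing $\lm c/4$ into $C_2$ (and symmetrically for case $(3)$). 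Superposing the particular and homogeneous solutions in each case gives exactly the three families in the statement, and the ``in particular'' clauses follow by specialising the integration constants and comparing with (\ref{equ: eguchi hanson om1})--(\ref{equ: eguchi hanson om3}).

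For the remaining assertions: when $\lm=0$ all three of $F_A\w\om_1,F_A\w\om_2,F_A\w\om_3$ vanish, which by the discussion following (\ref{equ: dhym n=2 2}) means $F_A\in\Lm^2_-\cong\mathfrak{su}(2)$, i.e.\ $A$ is anti-self-dual, hence hyper-holomorphic. Since $f_2$ and $f_3$ are proportional to $1/\sqrt{r^4-c}$ they blow up as $r\to c^{1/4}$, so any solution with $C_2\neq 0$ or $C_3\neq 0$ is merely a meromorphic HYM connection with a pole along the zero section; and by the extension criterion noted just before the proposition (namely $f_1(c^{1/4})=k$, $f_2(c^{1/4})=f_3(c^{1/4})=0$), a globally defined hyper-holomorphic connection must have $\lm=0$, $C_2=C_3=0$, and $f_1=C_1/r^2$ with $C_1=k\sqrt c$, which up to the overall scaling by $k\in\mathbb{Z}$ (and absorbing $\sqrt c$) is precisely $A=\eta_1/r^2$.

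I expect the one genuinely non-mechanical point to be the very last one: verifying that $A=\eta_1/r^2$ (suitably rescaled) really does extend to a \emph{smooth} connection on a line bundle over all of $T^*\C\mathbb{P}^1$ --- i.e.\ that matching the coefficient functions $f_i$ at $r=c^{1/4}$ genuinely suffices for regularity across the collapsing circle direction --- rather than the wedge-product computations or the ODE integrations, which are entirely routine once the orientation conventions are fixed.
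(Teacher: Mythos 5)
Your proposal is correct and follows essentially the same route as the paper, which likewise reduces the HYM system to the decoupled linear ODEs $(r^2f_1)'=\lm r^3$ (resp.\ $(\sqrt{r^4-c}\,f_{2,3})'=\lm r^3$) via the wedge-product computation deferred to the proof of Proposition \ref{prop: dhym on open set EH}, and then invokes the boundary conditions at $r=c^{1/4}$ for the extension statement. Your computation is in fact slightly more careful than the paper's (whose displayed coefficient for $F_A\w\om_1$ carries a spurious factor of $2$ that does not affect the stated solutions), so no changes are needed.
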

\begin{proof}
	The first part is a straightforward computation using (\ref{equ: curvature eguchihanson}) and (\ref{equ: dhym n=2 1}),(\ref{equ: dhym n=2 2}) with $\theta=0$; we refer the reader to the proof of Proposition \ref{prop: dhym on open set EH} below for the details. As discussed above the locally defined connection $A$ on $\mathcal{O}_{\C\mathbb{P}^1}(-2)\backslash\C\mathbb{P}^1$ extends to $T^*\C\mathbb{P}^2$ when the coefficient functions of $\eta_{2}$ and $\eta_{3}$ vanish at $r=c^{1/4}$ and this proves the last statement. 
\end{proof}
\begin{Rem}\label{hitchin permuting action}
In \cite[Sect. 2.2]{Hitchin2014hyperholomorphic} Hitchin showed that the aforementioned globally well-defined connection is in the fact the unique $S^1$-invariant hyper-holomorphic connection on $T^*\C\mathbb{P}^1$ which coincides with the Fubini-Study form on the zero section. Here the $S^1$ action is generated by the left-invariant vector field $X_1$ which is dual to $\eta_1$ i.e. $\eta_i(X_1)=\delta_{i1}$. This Killing circle action is permuting i.e. $\mathcal{L}_{X_1}\om_1=0$ but $\mathcal{L}_{X_1}\om_2=-\om_3$ and $\mathcal{L}_{X_1}\om_3=+\om_2$. Such examples of hyper-holomorphic connections arising from permuting Killing vector fields were first constructed by Haydys in \cite{Haydys2008}. In particular, this applies to any hyperK\"ahler manifold of the form $T^*(G/H)$, where $G/H$ is a Hermitian symmetric space and the permuting $S^1$ action corresponds to a rotation in the fibres. We shall again encounter this hyper-holomorphic connection in the next section where we shall have that $G/H=\SU(3)/\U(2)\cong \C\mathbb{P}^2$.
\end{Rem}
Next we consider the dHYM equations.  
\begin{Prop}\label{prop: dhym on open set EH}\
	Let $C_1,C_2, C_3$ be real constants, then on the open set $\mathcal{O}_{\C\mathbb{P}^1}(-2)\backslash\C\mathbb{P}^1\cong \R_{r>c^{1/4}}\times\SO(3)$ the connection form $A$ as given by (\ref{equ: ansatz for A on Eguchi-Hanson}) is a dHYM connection with respect to
	\begin{enumerate}
	\item $\om_1$ if and only if
	$f_2(r)=\frac{C_2}{\sqrt{r^4-c}}$, $f_3(r)=\frac{C_3}{\sqrt{r^4-c}}$
	and $f_1(r)$ is implicitly defined by
	\begin{equation}
	2\tan(\theta)f^2_1+r^2f_1-\frac{1}{8}\tan(\theta)\Big(r^4-\frac{16(C_2^2+C_3^2)}{r^4-c}\Big)=C_1.\label{equ: solution1 dhym eguchihanson}
	\end{equation}
	\item $\om_2$ if and only if
	$f_1(r)=\frac{C_1}{r^2}$, $f_3(r)=\frac{C_3}{\sqrt{r^4-c}}$
	and $f_2(r)$ is implicitly defined by
	\begin{equation}
	2\tan(\theta)f^2_2+\sqrt{r^4-c}f_2+2\tan(\theta)\Big(\frac{C_1^2}{r^4}+\frac{C_3^2}{r^4-c}-\frac{r^4}{16}\Big)=C_2.\label{equ: solution2 dhym eguchihanson}
	\end{equation}
	\item $\om_3$ if and only if
	$f_1(r)=\frac{C_1}{r^2}$, $f_2(r)=\frac{C_2}{\sqrt{r^4-c}}$
	and $f_3(r)$ is implicitly defined by
	\begin{equation}
	2\tan(\theta)f^2_3+\sqrt{r^4-c}f_3+2\tan(\theta)\Big(\frac{C_1^2}{r^4}+\frac{C_2^2}{r^4-C}-\frac{r^4}{16}\Big)=C_3.\label{equ: solution3 dhym eguchihanson}
	\end{equation}
\end{enumerate}
\end{Prop}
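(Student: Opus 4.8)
The plan is to reduce the dHYM system $(\ref{equ: dhym n=2 1})$--$(\ref{equ: dhym n=2 2})$ with respect to $\om_i$ to ordinary differential equations in the coefficient functions $f_1,f_2,f_3$ of $(\ref{equ: ansatz for A on Eguchi-Hanson})$, using the curvature formula $(\ref{equ: curvature eguchihanson})$ and the explicit triple $(\ref{equ: eguchi hanson om1})$--$(\ref{equ: eguchi hanson om3})$, and then to integrate. First I would record that on the $4$-manifold $\R_{r>c^{1/4}}\times\SO(3)$ every top-degree form is a function times $dr\wedge\eta_1\wedge\eta_2\wedge\eta_3$, and that $d(\eta_1\wedge\eta_2\wedge\eta_3)=0$ since $d\eta_i=\varepsilon_{ijk}\eta_{jk}/2$. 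The key computational point is that each K\"ahler form has the block shape $\om_\ell=a_\ell(r)\,dr\wedge\eta_\ell+b_\ell(r)\,\eta_{jk}$ with $(\ell,j,k)$ cyclic, so that a short computation gives
\begin{gather*}
F_A\wedge\om_1=\tfrac14(r^2 f_1)'\,dr\wedge\eta_{123},\qquad F_A\wedge\om_2=\tfrac14(\sqrt{r^4-c}\,f_2)'\,dr\wedge\eta_{123},\\
F_A\wedge\om_3=\tfrac14(\sqrt{r^4-c}\,f_3)'\,dr\wedge\eta_{123},\\
\om_\ell\wedge\om_\ell=\tfrac{r^3}{4}\,dr\wedge\eta_{123},\qquad F_A\wedge F_A=(f_1^2+f_2^2+f_3^2)'\,dr\wedge\eta_{123},
\end{gather*}
in particular $F_A\wedge\om_\ell$ depends only on the single function $f_\ell$, and the $(1-c/r^4)^{\pm1/2}$ factors cancel in $\om_\ell^2$ (as they must, since the three $\om_\ell$ share the Eguchi-Hanson volume form).

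Given this, I would treat $\om_1$ in detail, the cases $\om_2,\om_3$ being identical after cyclically permuting the indices and carrying the weights $(1-c/r^4)^{\pm1/2}$. The holomorphicity condition $(\ref{equ: dhym n=2 1})$, i.e. $F_A\wedge\om_2=F_A\wedge\om_3=0$, forces $\sqrt{r^4-c}\,f_2$ and $\sqrt{r^4-c}\,f_3$ to be constants, giving $f_2=C_2/\sqrt{r^4-c}$ and $f_3=C_3/\sqrt{r^4-c}$. Feeding this and the formulas above into the phase condition $(\ref{equ: dhym n=2 2})$ produces
\[
\tfrac12(r^2 f_1)'=\tan(\theta)\Bigl(\tfrac{r^3}{4}-\bigl(f_1^2+\tfrac{C_2^2+C_3^2}{r^4-c}\bigr)'\Bigr),
\]
an identity between total $r$-derivatives; integrating once and renaming the integration constant yields exactly $(\ref{equ: solution1 dhym eguchihanson})$. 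Running the same substitutions for $\om_2$ and $\om_3$ gives $(\ref{equ: solution2 dhym eguchihanson})$ and $(\ref{equ: solution3 dhym eguchihanson})$. Conversely, reversing these steps shows that any triple $(f_1,f_2,f_3)$ of the stated form solves $(\ref{equ: dhym n=2 1})$--$(\ref{equ: dhym n=2 2})$, which establishes the equivalences.

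The one point I would be careful about, and the reason the conclusion is an \emph{algebraic} (quadratic in $f_i$) relation rather than a genuine nonlinear ODE, is precisely this last observation: the $\om_i$-component equation $(\ref{equ: dhym n=2 2})$ is an exact differential, because $F_A\wedge\om_i$, $\om_i^2$ and $F_A^2$ are each $d/dr$ of an explicit function times $dr\wedge\eta_{123}$. Everything else is bookkeeping with the $\SU(2)$ structure constants and the convention $\eta_{jk}=\eta_j\wedge\eta_k$; the decoupling of the three equations is built into the special shape of the Eguchi-Hanson hyperK\"ahler triple.
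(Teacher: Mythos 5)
Your proposal is correct and follows essentially the same route as the paper: use the invariant ansatz to turn the holomorphicity conditions $F_A\wedge\om_j=F_A\wedge\om_k=0$ into linear first-order ODEs for $f_j,f_k$, and observe that the remaining phase equation is an exact ODE that integrates to the stated quadratic relation in $f_i$. Your reformulation of each term as an explicit total $r$-derivative (e.g.\ $F_A\wedge\om_1=\tfrac14(r^2f_1)'\,dr\wedge\eta_{123}$) makes the exactness transparent from the outset, whereas the paper writes down the nonlinear ODE and verifies exactness a posteriori, but the computations and conclusions are identical.
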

\begin{proof}
We begin with case $\mathrm{(1)}$. A direct computation using (\ref{equ: eguchi hanson om1})-(\ref{equ: eguchi hanson om3}) and (\ref{equ: curvature eguchihanson}) shows that $F_A\w \om_2=F_A \w \om_3=0$ is equivalent to 
\[(r^4-c)f_i'=-2r^3f_i,\]
for $i=2,3$ and solving the latter we get $f_2(r)=\frac{C_2}{\sqrt{r^4-c}}$, $f_3(r)=\frac{C_3}{\sqrt{r^4-c}}$. Using  (\ref{equ: eguchi hanson om1}) and (\ref{equ: curvature eguchihanson}), we find that
\[F_A \w \om_1 = \frac{2}{r^2}(rf_1'+2f_1)\ \om_1 \w \om_1\]
and 
\[\om_1^2-F_A^2=(-\frac{8f_1f_1'}{r^3}+(1+16\frac{C_2^2+C_3^2}{(r^4-c)^2}))\ \om_1 \w \om_1.\]
Thus, it follows that (\ref{equ: dhym n=2 2}) is given by the non-linear ODE:
\[2(r^2+4f_1 \tan(\theta))f_1'+4rf_1-r^3\Big(1+16\frac{C_2^2+C_3^2}{(r^4-c)^2}\Big)\tan(\theta)=0.\]
A close inspection of the latter shows that it is in fact an exact ODE. Indeed the reader will find no difficulty in verifying that differentiating (\ref{equ: solution1 dhym eguchihanson}) yields the latter ODE. Thus, this proves the first part. 

For case $(2)$, by an analogous calculation as above we find that $F_A \w \om_1=F_A \w \om_3 =0 $ if and only if  $f_1=\frac{C_1}{r^2}$ and $f_3=\frac{C_3}{\sqrt{r^4}-c}$. On the other hand from (\ref{equ: dhym n=2 2}) we find that $f_2(r)$ has to solve the non-linear ODE:
\[(4\tan(\theta)f_2+\sqrt{r^4-c})f_2'+\frac{2r^3}{\sqrt{r^4-c}}f_2-2\Big(\frac{4 C_1^2}{r^5}+\frac{4C_3^2 r^3}{(r^4-c)^2}+\frac{r^3}{4}\Big)\tan(\theta)=0.\]
Again it is not hard to see that the latter is an exact ODE and that the general solution is given by (\ref{equ: solution2 dhym eguchihanson}). Case $(3)$ is identical by swapping $f_2$ and $f_3$, and this concludes the proof.
\end{proof}
Since the $\tan(\theta)=0$ case is already covered by Proposition \ref{prop: hym eguchi-hanson} we shall now restrict to the case when $\tan(\theta)\neq 0$. Observe that in this case the solutions $f_i$ are implicitly defined as the solutions to certain quadratic polynomials (and that these reduce to linear equations precisely when $\tan(\theta)=0$). 
In order to determine which of the above locally defined dHYM connections extend to $T^*\C\mathbb{P}^1$, we need to impose the boundary conditions to the functions $f_i(r)$ at $r=c^{1/4}$. Moreover, we also need to verify that these are indeed well-defined for all values of $r\in[c^{1/4},\infty)$ since this is not a priori obvious from Proposition \ref{prop: dhym on open set EH}.
\begin{Th}\label{thm: main theorem TCP1}
Let $k\in \mathbb{Z}$ and suppose that $\tan(\theta) \neq 0$, then on $T^*\C\mathbb{P}^1$ the connection form $A$ as given by (\ref{equ: ansatz for A on Eguchi-Hanson}) is a dHYM connection with respect to
\begin{enumerate}
	\item $\om_1$ if and only if
	$f_2(r)=f_3(r)=0$
	and 
\begin{equation}
	f_1(r)=\frac{-r^2\pm\sqrt{\sec^2(\theta)r^4+\tan^2(\theta)(16k^2-c)+8kc^{1/2}\tan(\theta)}}{4\tan(\theta)},\label{solution1dhymTCP1}
\end{equation}
where we only take $+(-)$ if $4k\tan(\theta)+c^{1/2}>(<)0$. 
	\item $\om_2$ if and only if
$f_1(r)=\frac{c^{1/2}k}{r^2}$, $f_3(r)=0$
and 
\begin{equation}
	f_2(r)=\sqrt{r^4-c}\ \Big(\frac{-r^2\pm\sqrt{\sec^2(\theta)r^4+16k^2\tan^2(\theta)}}{4r^2\tan(\theta)}\Big).\label{solution2dhymTCP1}
\end{equation}
	\item $\om_3$ if and only if
$f_1(r)=\frac{c^{1/2}k}{r^2}$, $f_2(r)=0$
and 
\begin{equation}
	f_3(r)=\sqrt{r^4-c}\ \Big(\frac{-r^2\pm\sqrt{\sec^2(\theta)r^4+16k^2\tan^2(\theta)}}{4r^2\tan(\theta)}\Big).\label{solution3dhymTCP1}
\end{equation}
\end{enumerate}
\end{Th}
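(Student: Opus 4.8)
The plan is to take the locally-defined dHYM connections from Proposition \ref{prop: dhym on open set EH} and determine which ones extend smoothly across the zero section $\C\mathbb{P}^1$ at $r = c^{1/4}$, as dictated by the boundary conditions discussed just before the statement: namely $f_1(c^{1/4}) = k$ and $f_2(c^{1/4}) = f_3(c^{1/4}) = 0$. First I would treat case (1). From Proposition \ref{prop: dhym on open set EH}(1) we have $f_2(r) = C_2/\sqrt{r^4-c}$ and $f_3(r) = C_3/\sqrt{r^4-c}$; since these blow up at $r = c^{1/4}$ unless $C_2 = C_3 = 0$, smoothness across the zero section forces $C_2 = C_3 = 0$, hence $f_2 = f_3 \equiv 0$. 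Then (\ref{equ: solution1 dhym eguchihanson}) becomes the quadratic $2\tan(\theta) f_1^2 + r^2 f_1 - \tfrac{1}{8}\tan(\theta) r^4 = C_1$ in $f_1$; evaluating at $r = c^{1/4}$ with $f_1 = k$ pins down $C_1 = 2\tan(\theta) k^2 + c^{1/2} k - \tfrac{1}{8}\tan(\theta) c$. Substituting this value of $C_1$ back and solving the quadratic by the usual formula yields (\ref{solution1dhymTCP1}) after simplification; I expect the discriminant to reorganise as $\sec^2(\theta) r^4 + \tan^2(\theta)(16k^2 - c) + 8 k c^{1/2}\tan(\theta)$.

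For the sign choice in (\ref{solution1dhymTCP1}), I would argue by continuity: $f_1$ is continuous in $r$ and must equal $k$ at $r = c^{1/4}$, so the correct branch is the one whose value at $r = c^{1/4}$ is $k$. Plugging $r = c^{1/4}$ into the discriminant gives $\sec^2(\theta) c + \tan^2(\theta)(16k^2 - c) + 8kc^{1/2}\tan(\theta) = c + 16 k^2 \tan^2(\theta) + 8 k c^{1/2}\tan(\theta) = (c^{1/2} + 4k\tan(\theta))^2$, so the square root equals $|c^{1/2} + 4k\tan(\theta)|$, and then $(-c^{1/2} \pm |c^{1/2}+4k\tan(\theta)|)/(4\tan(\theta)) = k$ selects $+$ when $c^{1/2} + 4k\tan(\theta) > 0$ and $-$ when it is negative, as stated. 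I would also need to check that the chosen branch is well-defined and smooth for all $r \in [c^{1/4}, \infty)$: the discriminant is a sum of a strictly increasing term $\sec^2(\theta) r^4$ and a constant, and at $r = c^{1/4}$ it already equals a perfect square $(c^{1/2}+4k\tan\theta)^2 \geq 0$, so it stays strictly positive for $r > c^{1/4}$ (and is non-negative at the endpoint), hence the square root is smooth; one also verifies $f_1$ is smooth at $r=c^{1/4}$ in terms of the coordinate on $\mathcal{O}_{\C\mathbb{P}^1}(k)$, which follows from $f_2=f_3=0$ and $f_1 - k$ vanishing to the right order.

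For cases (2) and (3), which are symmetric under swapping $\eta_2 \leftrightarrow \eta_3$, I would proceed analogously starting from Proposition \ref{prop: dhym on open set EH}(2). There $f_1(r) = C_1/r^2$, and $f_1(c^{1/4}) = k$ forces $C_1 = c^{1/2} k$, giving $f_1(r) = c^{1/2}k/r^2$; also $f_3 = C_3/\sqrt{r^4-c}$, which must vanish at the zero section, forcing $C_3 = 0$, hence $f_3 \equiv 0$. Then (\ref{equ: solution2 dhym eguchihanson}) with these substitutions becomes a quadratic for $f_2$; imposing $f_2(c^{1/4}) = 0$ determines $C_2$, and solving the quadratic produces (\ref{solution2dhymTCP1}). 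A subtlety here is that $f_2$ carries the explicit factor $\sqrt{r^4 - c}$, which vanishes at $r = c^{1/4}$ automatically, so the boundary condition $f_2(c^{1/4}) = 0$ is consistent; I would extract $C_2$ by matching the constant term and check that the resulting formula is the correct root, again using continuity to fix the sign (though the statement leaves the $\pm$ as a genuine two-branch ambiguity, reflecting genuinely distinct solutions). The main obstacle I anticipate is purely bookkeeping: correctly carrying the constants $C_1, C_2, C_3$ through the substitution, isolating them from the boundary conditions, and then simplifying the quadratic-formula discriminants into the clean closed forms displayed — in particular recognising the endpoint value of the discriminant in case (1) as a perfect square, which is what makes the sign condition statable. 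There is no conceptual difficulty beyond this once Proposition \ref{prop: dhym on open set EH} is in hand; the content of the theorem is the extension/regularity analysis layered on top of the local classification.
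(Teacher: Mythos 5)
Your proposal is correct and follows essentially the same route as the paper: impose the boundary conditions $f_1(c^{1/4})=k$, $f_2(c^{1/4})=f_3(c^{1/4})=0$ on the local solutions of Proposition \ref{prop: dhym on open set EH} to pin down the integration constants (in particular $C_1=2\tan(\theta)k^2+c^{1/2}k-\tfrac{c}{8}\tan(\theta)$ in case (1) and $C_1=c^{1/2}k$, $C_2=(16k^2-c)\tan(\theta)/8$, $C_3=0$ in case (2)), then solve the resulting quadratics. Your explicit computation that the discriminant at $r=c^{1/4}$ is the perfect square $(c^{1/2}+4k\tan\theta)^2$, which both fixes the branch sign and shows positivity for all $r\geq c^{1/4}$, is in fact slightly more detailed than the paper's proof, which defers that observation to the remark following the theorem.
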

\begin{proof}
	Imposing the conditions that $f_2(c^{1/4})=f_3(c^{1/4})=0$ and $f_1(c^{1/4})=k$ in solution $(1)$ of Proposition \ref{prop: dhym on open set EH} gives that $C_2=C_3=0$ and
	\[C_1=2\tan(\theta) k^2+c^{1/2}k-\frac{c}{8}\tan(\theta).\]
	With $C_1$ as above and assuming $\tan(\theta)\neq 0$, we solve for $f_1$ in (\ref{equ: solution1 dhym eguchihanson}) to find (\ref{solution1dhymTCP1}).
	It is not hard to see from (\ref{solution1dhymTCP1}) that $f_1(r)$ is indeed well-defined for all values of $r\in[c^{1/4},\infty)$. 
	Next we consider case $(2)$.
	
	Imposing the same boundary conditions as before in solution $(2)$ of Proposition \ref{prop: dhym on open set EH} we now get that $C_1=c^{1/2}k$, $C_2=(16k^2-c)\tan(\theta)/8$ and $C_3=0$. Assuming that $\tan(\theta)\neq 0$, we solve for $f_2$ in (\ref{equ: solution2 dhym eguchihanson}) to find (\ref{solution2dhymTCP1}). 	Again it is clear that $f_2(r)$ is well-defined for all values of $r\in[c^{1/4},\infty)$. 
	The same argument applies for case $(3)$ and this concludes the proof.
\end{proof}
\begin{Rem}\
	\begin{enumerate}
		\item In case $(1)$ of Theorem \ref{thm: main theorem TCP1}, for generic values of $k$ and $\theta$ there is exactly one dHYM connection. Indeed when $r=c^{1/4}$, the term under the square root is given by $(4k\tan(\theta)+c^{1/2})^2$ and thus, we see that there is exactly one solution (among the two defined by (\ref{solution1dhymTCP1})) which satisfies $f_1(c^{1/4})=k$, except if $4k\tan(\theta)+c^{1/2}=0$ in which case we get $2$ distinct solutions. 
		It is also worth pointing out that by taking $\tan(\theta)=\frac{8 k c^{1/2}}{c-16 k^2}$ we recover the HYM solution $f_1=\lm r^2$ of Proposition \ref{prop: hym eguchi-hanson} $\mathrm{(1)}$ with $\lm=k/c^{1/2}, -c^{1/2}/16k$. 
		
		\item By contrast to case $(1)$, in cases $(2)$ and $(3)$ for generic values of $k$ and $\theta$ there are two dHYM connections. It is also worth pointing out that by taking $k=0$ we recover the HYM solution $f_2=\lm \sqrt{r^4-c}$ as in Proposition \ref{prop: hym eguchi-hanson} $\mathrm{(2)}$ i.e. $F_A$ is just a multiple of $\om_2$, with $\lm=(-1\pm |\sec(\theta)|)/4\tan(\theta)$.
		The analogous statement holds for case $(3)$.
	\end{enumerate}
\end{Rem}
Our results in this section show that although each K\"ahler form $\om_i$ is compatible with the same metric $g_{EH}$, the associated dHYM connections are rather different. In particular, for fixed value of phase angle $\theta$ and $k$ i.e. fixing the line bundle, the number of dHYM connections can be used to distinguish $\om_1$ from $\om_2$ and $\om_3$. We shall show in Section \ref{section: main} that a similar result also holds on $T^*\C\mathbb{P}^2$. 
On a compact manifold the phase angle $\theta$ is determined by the line bundle $L$ itself, so this freedom of varying both $\theta$ and $k$ is only occurs on non-compact manifolds; we illustrate this difference in an explicit example in next section.

\section{Deformed Hermitian Yang-Mills connections on $\SU(3)/\mathbb{T}^2$}\label{section: flag manifold}
In this section we construct $\SU(3)$-invariant dHYM connections on the flag manifold $F_{1,2}:=\SU(3)/\mathbb{T}^2$ with its standard homogeneous K\"ahler Einstein structure. This section serves two main purposes. Firstly it illustrates certain key differences between dHYM connections in the compact and non-compact setting, and secondly it will provide the basic set up for the cohomogeneity one $\SU(3)$-invariant connections that we shall investigate in the next section on $T^*\C\mathbb{P}^2$.

We begin by expressing the Maurer-Cartan form of $\SU(3)$ as
\[\Theta:=\begin{pmatrix}
	i(\theta_1+\theta_2) & i\theta_3-\theta_4 & \theta_5+i\theta_6 \\
	i\theta_3+\theta_4  & i(\theta_1-\theta_2) & i\theta_7+\theta_8 \\
	-\theta_5+i\theta_6  & i\theta_7-\theta_8 & -2i\theta_1 
\end{pmatrix}.\]
From the latter, the structure equations can be easily computed using $d\Theta+\Theta\w\Theta=0$. Since we shall use these frequently in the calculations throughout this article, we state them explicitly for the reader's convenience:
\begin{gather*}
	d\theta_{1}=-\theta_{56}+\theta_{78},\\
	d\theta_{2}=-2\theta_{34}-\theta_{56}-\theta_{78},\\
	d\theta_{3}=2\theta_{24}-\theta_{57}+\theta_{68},\\
	d\theta_{4}=-2\theta_{23}-\theta_{58}-\theta_{67},\\
	d\theta_{5}=3\theta_{16}+\theta_{26}+\theta_{37}+\theta_{48},\\
	d\theta_{6}=-3\theta_{15}-\theta_{25}-\theta_{38}+\theta_{47},\\
	d\theta_{7}=-3\theta_{18}+\theta_{28}-\theta_{35}-\theta_{46},\\
	d\theta_{8}=3\theta_{17}-\theta_{27}+\theta_{36}-\theta_{45},
\end{gather*}
where as before $\theta_{i...j}$ is shorthand for $\theta_i \w \cdots \w \theta_j$.
In what follows, we denote by $e_i$ the dual left invariant vector field to $\theta_i$ i.e. $\theta_i(e_j)=\delta_{ij}$.

Recall that the Hopf fibration is given by
\[\U(1) \hookrightarrow S^5 = \frac{\SU(3)}{\SU(2)}  \to \C\mathbb{P}^2 = \frac{\SU(3)}{\U(1)\SU(2)},\]
where the $\U(1)$ action is generated by $e_1$ and the $\SU(2)$ action is generated by $\langle e_2,e_3,e_4\rangle$. The Fubini-Study K\"ahler form of the base $\C\mathbb{P}^2$ 
can be identified with 
$\theta_{56}-\theta_{78}=:\om_{\C\mathbb{P}^2}$ (this corresponds to the curvature of the connection $1$-form $-\theta_1$).  
In view of twistor theory we also have the $S^2$ bundle  
$$\pi: F_{1,2}:=\SU(3)/\mathbb{T}^2 \to \C\mathbb{P}^2 
,$$ where the $\mathbb{T}^2$ action here is generated by $e_1$ and $e_3$; we refer the reader to \cite[Chap. 13]{Besse2008} and \cite[Chap. 4]{Salamon1989} for basic facts on twistor theory. We can express the K\"ahler Einstein structure of the flag manifold $F_{1,2}$ by
\begin{gather}
	g_{KE}= 2\theta_{2}^2+2\theta_{4}^2+\theta_{5}^2+\theta_{6}^2+\theta_{7}^2+\theta_{8}^2,\\
	\om_{KE}=d\theta_{3}=2\theta_{24}-\theta_{57}+\theta_{68}.\label{KE 2-form}
\end{gather}
We should point out that the twistor map $\pi$ is not holomorphic with respect to the natural complex structure of $\C\mathbb{P}^2$. Instead the twistor fibration can be viewed as the $2$-sphere bundle associated to the quaternionic K\"ahler structure of $\overline{\C\mathbb{P}^2}$ (which has the opposite orientation to the Fubini-Study one); indeed the quaternionic K\"ahler structure of $\overline{\C\mathbb{P}^2}$ is generated by the self-dual $2$-forms $\theta_{56}+\theta_{78}$, $\theta_{57}-\theta_{68}$ and $\theta_{58}+\theta_{67}$, and $F_{1,2}$ can be identified with its unit sphere bundle. Since $\om_{\C\mathbb{P}^2}$ is anti-self-dual with respect to the quaternionic K\"ahler structure it follows from twistor theory that $\pi^*\om_{\C\mathbb{P}^2}$ is of type $(1,1)$ with respect to the K\"ahler Einstein structure of $F_{1,2}$.

\begin{Rem}
	If we had considered the $\mathbb{T}^2$ action generated by $e_1$ and $e_2$ to define the flag manifold then we would have obtain an equivalent K\"ahler Einstein structure which is instead given by
	\begin{gather*}
		g_{}= 2\theta_{
			3}^2+2\theta_{4}^2+\theta_{5}^2+\theta_{6}^2+\theta_{7}^2+\theta_{8}^2,\\
		\om_{}=d\theta_{2}=-2\theta_{34}-\theta_{56}-\theta_{78}.
	\end{gather*}
	Similarly if one considers the $\mathbb{T}^2$ action generated by $e_1$ and $e_4$, we get another equivalent K\"ahler Einstein structure. This gives rise to the so-called triality. Since these are all related by a $\mathbb{Z}_3$-automorphism it suffices to restrict to only one case.
\end{Rem}
Before describing the $\SU(3)$-invariant abelian connections on $F_{1,2}$, we first note that $H^2(F_{1,2},\mathbb{Z})\cong \mathbb{Z}^2$ and hence the space of complex line bundles on $F_{1,2}$ is determined by a pair of integers. It is not hard to see that the general $\SU(3)$-invariant abelian connection $1$-form on the flag $F_{1,2}$ is given by
\[A=a_1\theta_{1}+a_3 \theta_{3},\] 
where $(a_1,a_3)\in \mathbb{Z}^2\cong H^2(F_{1,2},\mathbb{Z})$ i.e. $A$ corresponds to an $\SU(3)$-invariant connection on each of the distinct line bundles. Moreover, using the structure equations we have that
\begin{equation}
F_A=  a_1(-\theta_{56}+\theta_{78})+a_3(2\theta_{24}-\theta_{57}+\theta_{68})=-a_1 \pi^*\om_{\C\mathbb{P}^2}+a_3\om_{KE}\label{KE curvature}
\end{equation} 
and it follows from the above discussion that $A$ is a holomorphic connection with respect to the K\"ahler Einstein structure of $F_{1,2}$.

\begin{Rem}
On a general \textit{compact} K\"ahler manifold $M^{2n}$, the phase angle $\theta$ is determined by the Chern class of the line bundle itself, i.e. by $[F_A]$, via 
\begin{equation}
\theta=\arg(\int_M(\om+iF_A)^n).\label{equ: definition of phase angle compact}
\end{equation}
This is not generally applicable in the non-compact set up since the latter integral might be infinite; this was indeed the case in our cohomogeneity one framework in the last section. This explains why we had the freedom to vary both $k$ and $\theta$ independently in the Theorem \ref{thm: main theorem TCP1}. In the $F_{1,2}$ situation, however, it follows that $\theta$ is completely determined by the pair $(a_1,a_2)$. We shall now describe the dHYM connections in this case explicitly.
\end{Rem}
\begin{figure}
	\centering
	\includegraphics[height=7cm]{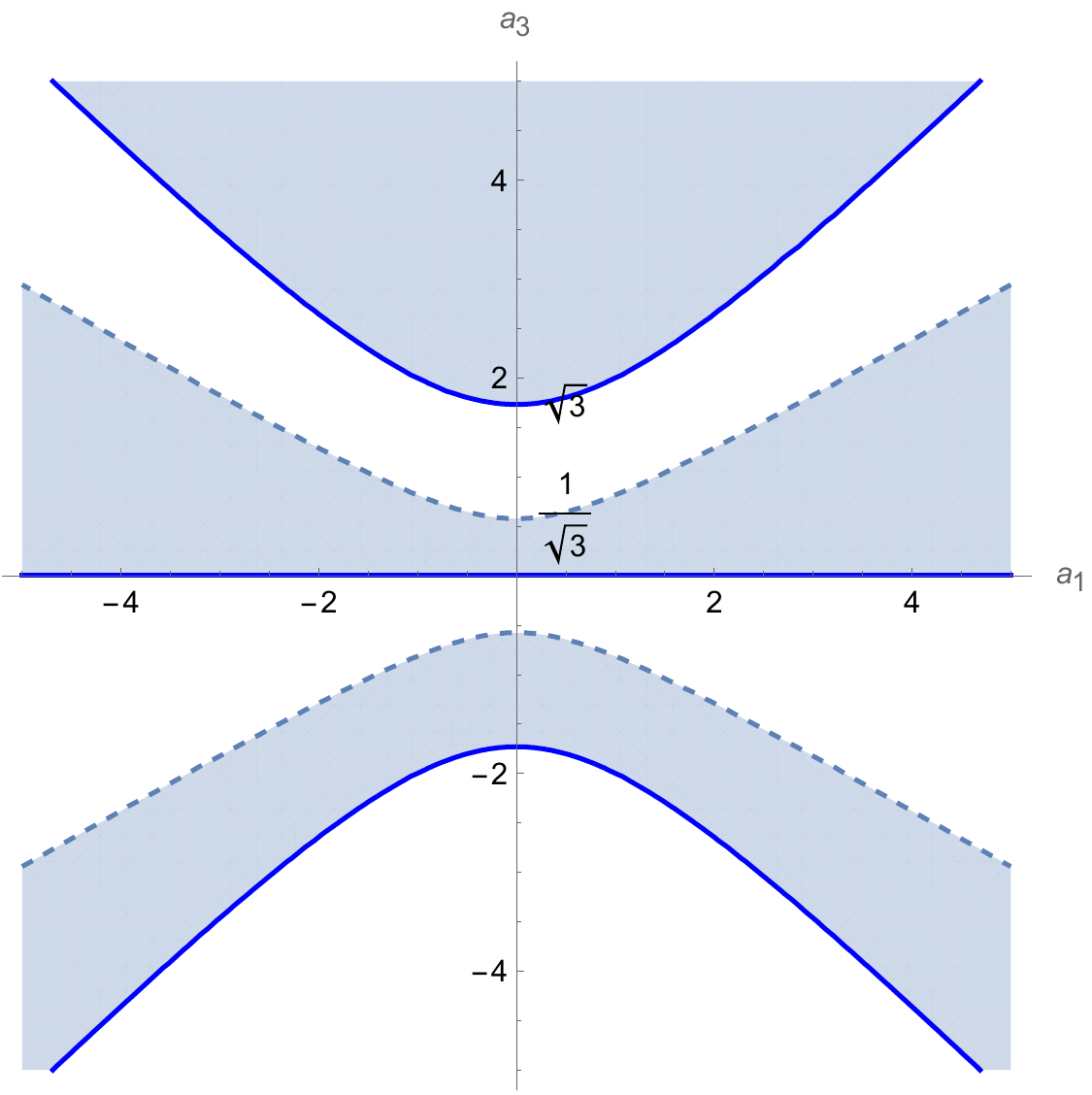}
	\caption{{{The shaded region corresponds to $\tan(\theta)\geq 0$, bold line to $\tan(\theta)=0$ and dotted line to $\cot(\theta)=0$} in (\ref{equ: f12 tantheta})}}\label{figure100}
\end{figure}
\begin{Th}\label{theorem: dhym on flag}
	The connection form $A=a_1\theta_{1}+a_3 \theta_{3}$ is a dHYM connection with respect to $\om_{KE}$ if and only if
	\begin{equation}
	\tan(\theta)=\frac{a_3(a_3^2-a_1^2-3)}{3a_3^2-a_1^2-1},\label{equ: f12 tantheta}
	\end{equation}	
or equivalently, 
\begin{equation}
\theta \equiv \arctan({a_3}) + \arctan(a_3+a_1)+\arctan(a_3-a_1) \mod 2\pi. \label{equ: f12 theta}
\end{equation}
\end{Th}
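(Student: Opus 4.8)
The plan is to substitute the curvature formula \eqref{KE curvature} into the dimension-four dHYM equation \eqref{equ: dhym n=4} with $\om=\om_{KE}$ and extract the scalar identity \eqref{equ: f12 tantheta}. First I would set up convenient orthonormal data: since $F_A = -a_1\pi^*\om_{\C\mathbb{P}^2}+a_3\om_{KE}$ and both $\pi^*\om_{\C\mathbb{P}^2}=\theta_{56}-\theta_{78}$ and $\om_{KE}=2\theta_{24}-\theta_{57}+\theta_{68}$ are of type $(1,1)$, the holomorphicity condition \eqref{equ: general dhym equation holo} is automatic, exactly as noted after \eqref{KE curvature}. So only \eqref{equ: general dhym equation}, equivalently its $n=3$ form $\mathrm{Im}((\om_{KE}+iF_A)^3)=\tan(\theta)\,\mathrm{Re}((\om_{KE}+iF_A)^3)$, must be imposed. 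The cleanest route is to diagonalise $F_A$ against $\om_{KE}$: with respect to the three holomorphic coordinate planes of the K\"ahler Einstein structure, $\om_{KE}$ has "eigenvalues" $(1,1,1)$ while $F_A$ has eigenvalues $(\lambda_1,\lambda_2,\lambda_3)$, and a short computation using $\theta_{24},\theta_{57},\theta_{68}$ and $\theta_{56},\theta_{78}$ as the $2$-planes should give $\lambda_1=a_3$, $\lambda_2=a_3+a_1$, $\lambda_3=a_3-a_1$ (the precise labels depending on orientation conventions, but the multiset is forced by \eqref{KE curvature}).

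With this diagonalisation, $(\om_{KE}+iF_A)^3 = \prod_{j=1}^3(1+i\lambda_j)\cdot(\text{volume form})$, so \eqref{equ: general dhym equation} becomes simply $\mathrm{Im}\prod_j(1+i\lambda_j)=\tan(\theta)\,\mathrm{Re}\prod_j(1+i\lambda_j)$, i.e.
\[
\tan(\theta)=\frac{\mathrm{Im}\prod_{j=1}^3(1+i\lambda_j)}{\mathrm{Re}\prod_{j=1}^3(1+i\lambda_j)}=\frac{(\lambda_1+\lambda_2+\lambda_3)-\lambda_1\lambda_2\lambda_3}{1-(\lambda_1\lambda_2+\lambda_2\lambda_3+\lambda_3\lambda_1)}.
\]
Substituting $\lambda_1=a_3,\ \lambda_2=a_3+a_1,\ \lambda_3=a_3-a_1$ and simplifying the elementary symmetric functions gives numerator $3a_3-a_3(a_3^2-a_1^2)=a_3(3-a_3^2+a_1^2)=-a_3(a_3^2-a_1^2-3)$ and denominator $1-\big((a_3^2-a_1^2)+2a_3^2\big)=1-3a_3^2+a_1^2=-(3a_3^2-a_1^2-1)$, so the two signs cancel and we recover \eqref{equ: f12 tantheta}. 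The equivalent form \eqref{equ: f12 theta} is then immediate from the identity $\arg\prod_j(1+i\lambda_j)\equiv\sum_j\arctan(\lambda_j)\pmod{2\pi}$, which is exactly $\arctan(a_3)+\arctan(a_3+a_1)+\arctan(a_3-a_1)$.

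The only genuinely non-routine step is the diagonalisation: verifying that the $2$-form $F_A$ of \eqref{KE curvature} is simultaneously "diagonal" with $\om_{KE}$ in a single adapted orthonormal coframe, with the stated eigenvalues. This needs one to pick the correct compatible complex structure on $F_{1,2}$ — the one for which $\om_{KE}$ is the K\"ahler form and $\pi^*\om_{\C\mathbb{P}^2}$ is of type $(1,1)$ (guaranteed by the twistor discussion preceding the theorem) — and to check that the pairs $\{\theta_2,\theta_4\}$, $\{\theta_5,\theta_7\}$, $\{\theta_6,\theta_8\}$ (up to reordering) furnish the coordinate $2$-planes in which both forms are block-diagonal. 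Once the eigenvalues $\{a_3,\ a_3\pm a_1\}$ are in hand everything else is the elementary-symmetric-function bookkeeping above. I would also remark that, in contrast to the non-compact examples of Section \ref{section: tcp1}, here $\theta$ is not a free parameter: \eqref{equ: f12 tantheta} shows it is pinned down by $(a_1,a_3)$, consistent with \eqref{equ: definition of phase angle compact} on the compact manifold $F_{1,2}$.
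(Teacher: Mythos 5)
Your proposal is correct in substance and lands on the same key computation as the paper, but the two arguments are organised differently. The paper first verifies \eqref{equ: f12 tantheta} by brute force: it writes the $n=3$ form of \eqref{equ: general dhym equation} as $3F_A\w\om_{KE}^2-F_A^3=\tan(\theta)(\om_{KE}^3-3F_A^2\w\om_{KE})$ and computes the four wedge products explicitly as multiples of $\theta_{245678}$; only afterwards, to obtain \eqref{equ: f12 theta}, does it pass to the endomorphism picture and read off the eigenvalues $a_3,\ a_3\pm a_1$. You instead run the eigenvalue/elementary-symmetric-function computation once and derive both formulas from it, which is cleaner and makes the equivalence of \eqref{equ: f12 tantheta} and \eqref{equ: f12 theta} transparent; the cost is that the whole argument now rests on correctly identifying the spectrum of $F_A$ as a Hermitian endomorphism of $T^{1,0}$, whereas the paper's direct wedge computation needs no such identification for the first formula. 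One small inaccuracy in your write-up: the pairs $\{\theta_5,\theta_7\}$ and $\{\theta_6,\theta_8\}$ do \emph{not} block-diagonalise $F_A$, because the term $a_1(-\theta_{56}+\theta_{78})$ mixes these two holomorphic planes --- this is precisely why the paper's matrix has the off-diagonal $2\times 2$ block $\begin{pmatrix} a_3 & -a_1\\ -a_1 & a_3\end{pmatrix}$ rather than being diagonal. Diagonalising that block still yields eigenvalues $a_3\pm a_1$, so your multiset $\{a_3,\ a_3+a_1,\ a_3-a_1\}$ and everything downstream of it is correct; you just need a further unitary rotation inside $\langle\theta_5,\theta_6,\theta_7,\theta_8\rangle$ to reach the genuinely adapted coframe. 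Your closing remark about $\theta$ being pinned down by $(a_1,a_3)$ via \eqref{equ: definition of phase angle compact} matches the paper's discussion preceding the theorem.
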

\begin{proof}
First setting $n=3$ in (\ref{equ: general dhym equation}) gives $$ 3 F_A \w \om_{KE}^2- F_A^3 = \tan(\theta) (\om_{KE}^3- 3 F_A^2 \w \om_{KE}).$$ 
Since we already know that $A$ is a holomorphic connection, it suffices to solve the latter equation. A straightforward computation using (\ref{KE 2-form}) and (\ref{KE curvature}) shows that
\begin{gather*}
	F_A \w \om_{KE}^2= 12 a_3 \theta_{245678},\\
	F_A^3= -12(a_1^2 a_3-a_3^3)\theta_{245678},\\
	\om_{KE}^3= 12\theta_{245678},\\
	F_A^2 \w \om_{KE} = -4(a_1^2-3a_3^2)\theta_{245678},
\end{gather*}
and from this we easily deduce (\ref{equ: f12 tantheta}). To obtain the second expression we follow the approach in \cite[Section 2]{Collins2018deformed}. First using $\om_{KE}$ we can view the $(1,1)$-form $F_A$ as an endomorphism of holomorphic tangent bundle $T^{1,0}$; concretely 
it is can expressed as
\[\begin{pmatrix}
	{a_3} & 0 &  0 \\
	0 & a_3 &  -a_1 \\
	0 & -a_1 &  a_3 
\end{pmatrix}\]
and has eigenvalues $a_3, a_3+a_1$ and $a_3-a_1$. Thus, we have that 
\[re^{i\theta}=
\frac{(\om_{KE}+iF_A)^3}{\om_{KE}^3}=(1+i{a_3})(1+i(a_3+a_1))(1+i(a_3-a_1)),
\]
for some positive number $r$, and using (\ref{equ: definition of phase angle compact}) we get (\ref{equ: f12 theta}).
\end{proof} 
In \cite{JacobYau2017} Jacob-Yau showed that solutions to the dHYM equations on a \textit{compact} K\"ahler manifold are unique in a given cohomology class. Hence it follows that the solutions in Theorem \ref{theorem: dhym on flag} in fact consist of all the dHYM connections on $F_{1,2}$ i.e. not just the invariant ones. Figure \ref{figure100} illustrates how the phase angle depends on the choice on the line bundle over $F_{1,2}$. 
Having given a concrete description of $\SU(3)$, next we proceed to our main goal, namely, the construction of  the deformed instantons equations on $T^*\C\mathbb{P}^2$. 

\section{Deformed Hermitian Yang-Mills and deformed $\spin(7)$ instantons on $T^*{\C\mathbb{P}^2}$}\label{section: main}

In this section we construct dHYM connections on the cotangent bundle of $\C\mathbb{P}^2$ endowed with the Calabi hyperK\"ahler structure \cite{CalabiAnsatz1979}. It is well-known that the latter is invariant under a cohomogeneity one action of $\SU(3)$ and hence the problem of constructing $\SU(3)$-invariant dHYM connections reduces to solving a non-linear system of ODEs. This is a higher dimensional analogue of the problem considered in Section \ref{section: tcp1} albeit more involved. As we already saw in Section \ref{section: preliminaries} one can also define `natural' $\spin(7)$-structures on a  hyperK\"ahler $8$-manifold by (\ref{sp2andspin7i}). Hence we shall also consider deformed $\spin(7)$ instantons in this set up and compare the two cases. 

We begin by describing the cohomogeneity one hyperK\"ahler structure of $T^*\C\mathbb{P}^2$ following the same notation as introduced in the previous section. First we recall the well-known fact that for each pair of integers $k,l$ there are different embeddings of $\U(1)$ in the maximal torus of $\SU(3)$. Concretely, we denote these by $\U(1)_{k,l}:=\mathrm{diag}(e^{ik\vp},e^{il\vp}, e^{-i(k+l)\vp})$, where $\vp \in [0,2\pi)$. Observe that the infinitesimal vector field associated to this $\U(1)_{k,l}$ action is given by $\frac{k+l}{2}e_1+\frac{k-l}{2}e_2$. The homogeneous spaces $N_{k,l}:=\SU(3)/\U(1)_{k,l}$ are called the Aloff-Wallach spaces. Here we shall only consider the case when $k=l=1$ since $N_{1,1}$ is precisely the principal orbit of the cohomogeneity one action of $\SU(3)$ on $T^*\C\mathbb{P}^2$ and for convenience we shall denote $\U(1)_{1,1}$ simply by $\U(1)$. On the other hand, the singular orbit of the $\SU(3)$ action is isomorphic to $\C\mathbb{P}^2\cong \SU(3)/\U(2)$, where the $\U(2)$ is generated by $\langle e_1,e_2,e_7,e_8\rangle$ (see Proposition \ref{prop: HK structure of TCP2} below). It is worth pointing out that in fact the Aloff Wallach spaces $N_{1,1}, N_{1,-2}$ and $N_{-2,1}$ are all diffeomorphic under the action of the Weyl group of $\SU(3)$; however, when viewed as bundles over the latter $\C\mathbb{P}^2$ they correspond to either an $\R\mathbb{P}^3$ or $S^3$ bundle and this subtlety turns out to be rather important as we shall see below. In \cite{Cvetivc2002} the authors show that the Calabi hyperK\"ahler structure in fact belongs to a $1$-parameter family of complete $\spin(7)$ metrics and moreover that there also exist $\SU(3)$-invariant $\spin(7)$ metrics with $N_{k,l}$ as the principal orbits for each $k,l$. In this paper we shall not consider these more general families since these are not explicit and hence more involved to study. 

For the homogeneous space $N_{1,1}$, we have the reductive splitting
\begin{gather*}
\mathfrak{su}(3)\cong \mathfrak{u}(1)\oplus \mathfrak{m},
\end{gather*}
where $\mathfrak{u}(1)\cong\langle e_1 \rangle$ and $\mathfrak{m}$ denotes its $\mathrm{Ad}\big|_{\U(1)}$ complement. As a $\U(1)$-module we have that
$$\mathfrak{m} \cong 3\R \oplus 2\C,$$ 
where the three copies of $\R$ are generated by $\langle e_2,e_3,e_4\rangle$ and the two copies of $\C$ are spanned by $\langle e_5, e_6 \rangle$  and $\langle e_8, e_7 \rangle$. Note that the $\U(1)$ acts with weight $3$ on each of these copies of $\C$ i.e. we have that $[e_1,e_5]=3e_6$,  $[e_1,e_6]=-3e_5$ and $[e_1,e_8]=3e_7$, $[e_1,e_7]=-3e_8$.  The tangent space of $N_{1,1}$ at a point can be identified with the isotropy representation $\mathfrak{m}$. It follows that the space of $\SU(3)$-invariant $2$-forms on $N_{1,1}$ can then be identified with $\U(1)$-invariant $2$-forms on $\mathfrak{m}$. A simple computation shows that as $\U(1)$-module 
$$\Lm^2(\mathfrak{m})\cong 7\R \oplus 6 \C$$ 
and hence there are $7$ such invariant $2$-forms: $3$ of them are obtained as combinations of $\langle\theta_2,\theta_3,\theta_4\rangle$ and the other $4$ are given by
$$\langle \theta_{56}, \theta_{78}, \theta_{57}+\theta_{86},\theta_{58}+\theta_{67}\rangle.$$ Using these invariant forms on $N_{1,1}$ we can now express the $\Sp(2)$-structure of $T^*\C\mathbb{P}^2$ as follows:
\begin{Prop}\label{prop: HK structure of TCP2}
	The Calabi hyperK\"ahler structure of $T^*{\C\mathbb{P}^2}$ is explicitly given by
	\begin{gather}
		g_{Ca} = f_0^2dr^2+f_2^2\theta^2_2+f_3^2(\theta_3^2+\theta_4^2)+
		f_5^2 (\theta_5^2+\theta_6^2)+f_7^2(\theta_7^2+\theta_8^2),\label{equ: tcp2 metric}\\
		\om_1 = f_0f_2dr \w \theta_2 +f_3^2 \theta_{34}+f_5^2\theta_{56}+f_7^2\theta_{78},\label{equ: tcp2 om1}\\
		\om_2 = f_0f_3dr \w \theta_3 +f_2f_3 \theta_{42}+f_5f_7\sigma_{2},\label{equ: tcp2 om2}\\
		\om_3 = f_0f_3dr \w \theta_4 +f_2f_3 \theta_{23}+f_5f_7\sigma_{3},\label{equ: tcp2 om3}
	\end{gather}	
	where $\sigma_{2}:=\theta_{57}+\theta_{86}$, $\sigma_{3}:=\theta_{58}+\theta_{67}$,
	\begin{gather*}
		f_0=-(1-4c^2r^{-4})^{-1/2},\\
		f_2=r(1-4c^2r^{-4})^{1/2}=\frac{2}{r}(r^2/2+c)^{1/2}(r^2/2-c)^{1/2},\\
		f_3=r,\\
		f_5=(r^2/2+c)^{1/2},\\
		f_7=(r^2/2-c)^{1/2},
	\end{gather*}
	and $c$ is a constant determining the size of the zero section $\C\mathbb{P}^2$. Without loss of generality, we shall assume that $c>0$ and take $r\in [\sqrt{2c},+\infty)$. The bolt (or zero section) in this case is a $\C\mathbb{P}^2$ spanned by $\langle e_3,e_4,e_5,e_6\rangle$ and where the Fubini-Study form is, up to a constant, given by $\theta_{34}+\theta_{56}$. 
\end{Prop}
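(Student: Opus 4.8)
The plan is to verify Proposition \ref{prop: HK structure of TCP2} directly by checking that the stated triple $(\om_1,\om_2,\om_3)$ is closed, that it induces the stated metric $g_{Ca}$, and that the metric is complete; the identification of $T^*\C\mathbb{P}^2$ with the underlying space and of the bolt with the stated $\C\mathbb{P}^2$ then follows from the known Calabi ansatz. Since the holonomy-$\Sp(2)$ condition is equivalent to $d\om_1 = d\om_2 = d\om_3 = 0$ (as recalled after \eqref{kahlermetric}), the heart of the computation is the closedness check, and everything reduces to applying the structure equations of $\SU(3)$ written out in Section \ref{section: flag manifold}.

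First I would record the exterior derivatives of the invariant $2$-forms appearing in the ansatz. Writing $\al := \theta_1$ (note $d\theta_1 = -\theta_{56}+\theta_{78}$ is \emph{not} one of the $\om_i$ here, unlike in the $K_{\C\mathbb{P}^2}$ case), one computes from the structure equations the derivatives of $\theta_{34}$, $\theta_{56}$, $\theta_{78}$, $\theta_{42}$, $\theta_{23}$, $\sigma_2 = \theta_{57}+\theta_{86}$ and $\sigma_3 = \theta_{58}+\theta_{67}$, keeping track of the $dr$-free terms (these must cancel among themselves, forcing algebraic relations among the $f_i$) and the terms proportional to $\theta_2$, $\theta_3$, $\theta_4$ coming from $d\theta_5,\dots,d\theta_8$ (these pair with the $dr\w\theta_i$ terms in $\om_i$ after differentiating the coefficient functions $f_0f_2$, etc.). Then $d\om_1 = 0$ becomes a first-order ODE relating $(f_0f_2)'$ to $f_3^2, f_5^2, f_7^2$ together with purely algebraic identities; similarly for $d\om_2$ and $d\om_3$. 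One checks that the explicit functions $f_0 = -(1-4c^2r^{-4})^{-1/2}$, $f_2 = r(1-4c^2r^{-4})^{1/2}$, $f_3 = r$, $f_5 = (r^2/2+c)^{1/2}$, $f_7 = (r^2/2-c)^{1/2}$ satisfy all of these. The key algebraic relations to watch are $f_5^2 - f_7^2 = 2c$, $f_5^2 + f_7^2 = r^2 = f_3^2$, and $f_0 f_2 = -f_3 = -r$ (equivalently $f_2 = -f_0^{-1}f_3$), which are precisely what make the $dr$-free parts vanish and the coefficient-derivative parts match.

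Next I would verify \eqref{kahlermetric}, i.e. that $\om_i(\cdot,\cdot) = g_{Ca}(J_i\cdot,\cdot)$ for the almost complex structures $J_i$ determined by the $\Sp(2)$-structure, equivalently that the $\om_i$ are orthonormal in the appropriate sense and that $\om_1^4/24$ recovers the Riemannian volume of $g_{Ca}$; this is a pointwise linear-algebra check in the coframe $\{dr, \theta_2,\dots,\theta_8\}$ using $f_5^2 f_7^2 = (r^2/2)^2 - c^2 = f_2^2 f_3^2/4 \cdot$(constant) — here again $f_2 f_3 = 2 f_5 f_7$ is the relation that makes $\om_2, \om_3$ have the correct normalization matching $\om_1$. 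Finally, completeness and the smooth extension across $r = \sqrt{2c}$: near the bolt, $f_7 \to 0$ while $f_2, f_3, f_5$ stay positive, and I would check that $f_7^2(\theta_7^2+\theta_8^2)$ together with $f_0^2 dr^2$ degenerates in the standard $\R^4$-collapsing pattern so that the total space is smooth and diffeomorphic to the rank-$4$ vector bundle $T^*\C\mathbb{P}^2 \to \C\mathbb{P}^2$ with zero section spanned by $\langle e_3,e_4,e_5,e_6\rangle$ and Fubini-Study form $\theta_{34}+\theta_{56}$; for the asymptotics and completeness for all $r\in[\sqrt{2c},\infty)$ I would cite Calabi \cite{CalabiAnsatz1979} (and Page-Pope for the general line-bundle Einstein metrics) rather than redo the analysis. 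The main obstacle is purely bookkeeping: organizing the roughly two dozen terms in each $d\om_i$ so that the algebraic cancellations are transparent, and being careful with signs in $\sigma_2 = \theta_{57}+\theta_{86}$ (note the $86$, not $68$) since the whole point of the sign conventions is to make $d\om_2, d\om_3$ close against the $\pm 3\al\w(\cdot)$ terms produced by $d\theta_5,\dots,d\theta_8$.
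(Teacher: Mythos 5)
Your proposal is correct and follows essentially the same route as the paper: compute $d\om_1,d\om_2,d\om_3$ from the $\SU(3)$ structure equations, reduce closedness to the algebraic relations $f_3^2=f_5^2+f_7^2$, $f_2f_3=2f_5f_7$ together with the first-order ODEs $f_0f_2=-\tfrac12(f_3^2)'=-(f_5^2)'=-(f_7^2)'$ and $f_0f_3=-\tfrac12(f_2f_3)'=-(f_5f_7)'$ (which your explicit identities $f_5^2+f_7^2=r^2$, $f_0f_2=-r$, etc.\ verify), and then cite Calabi for the smooth extension across the bolt and completeness. The extra pointwise check that the triple has the standard $\Sp(2)$ normal form inducing $g_{Ca}$ is a harmless addition the paper leaves implicit.
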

\begin{proof}
	First we need to verify that $d\om_1=d\om_2=d\om_3=0.$ Consider the $\Sp(2)$-structure defined by (\ref{equ: tcp2 metric})-(\ref{equ: tcp2 om3}) whereby $f_i(r)$ are arbitrary functions. Then a straightforward calculation using the structure equations of $\SU(3)$ shows that $d\om_1=0$ is equivalent to
	\begin{gather*}
		f_3^2=f_5^2+f_7^2,\\
		f_0f_2=-\frac{1}{2}(f_3^2)'=-(f_5^2)'=-(f_7^2)'.
	\end{gather*}
	On the other hand, the condition that $d\om_2=0$ (which coincides with $d\om_3=0$) is equivalent to
	\begin{gather*}
		f_2f_3=2f_5f_7,\\
		f_0f_3=-\frac{1}{2}(f_2f_3)'=-(f_5f_7)'.
	\end{gather*}
	The functions $f_i(r)$ as given in the proposition follow immediately by solving the above system of ODEs. To conclude we still need to show that the metric $g_{Ca}$ extends smoothly across the zero section and hence is indeed complete. Since this was already demonstrated in \cite{CalabiAnsatz1979}, see also \cite{Cvetivc2002}, this concludes the proof.
\end{proof}
As already mentioned in Remark \ref{hitchin permuting action}, there exists again a permuting Killing vector field, given by $e_2/2$, such that $\mathcal{L}_{e_2}\om_1=0$, $\mathcal{L}_{e_2}\om_2=+2\om_3$ and $\mathcal{L}_{e_2}\om_3=-2\om_2$. As in Section \ref{section: tcp1} this will again give rise to a hyper-holomorphic connection (see Lemma \ref{lemma: holomorphic} below). 
Note that we can always set the constant $c=1$ by a suitably reparametrising the radial coordinate $r$, however we shall find it more useful to keep it so as to facilitate comparison with the conical situation whereby $c=0$, see Section \ref{section: bscone}. To compute the deformed $\spin(7)$ instanton equations later on we shall need the next result. 
\begin{Prop}\label{prop: spaces Ei}
With respect to the Calabi hyperK\"ahler structure, as given in Proposition \ref{prop: HK structure of TCP2}, the space of $\SU(3)$-invariant $2$-forms in each of the spaces $E_1, E_2,E_3$, as defined by (\ref{definition of Ei}), is one dimensional and these are generated by
\begin{gather}\label{prop: compute Eis}
\al_1=f_0f_2dr \w \theta_2 +f_3^2 \theta_{34}-f_5^2\theta_{56}-f_7^2\theta_{78},\\
\al_2=f_0f_3dr \w \theta_3 +f_2f_3 \theta_{42}-f_5f_7\sigma_{2},\\
\al_3=f_0f_3dr \w \theta_4 +f_2f_3 \theta_{23}-f_5f_7\sigma_{3},
\end{gather}
respectively. In particular, from (\ref{definition of F_i^+}) we have that $F_i^+=\langle \al_j \w \om_k \rangle$.
\end{Prop}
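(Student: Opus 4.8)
The plan is to pass to a pointwise-orthonormal coframe in which $\om_1,\om_2,\om_3$ acquire the standard normal forms (\ref{equ: om1 pointwise})--(\ref{equ: om3 pointwise}), so that the complex structures $J_1,J_2,J_3$ act on $\Lm^2$ as explicit commuting sign-permutations, and then to reduce everything to a finite-dimensional eigenspace computation on the space of $\SU(3)$-invariant $2$-forms. Concretely, by (\ref{equ: tcp2 metric}) the $1$-forms $e^1=f_0\,dr$, $e^2=f_2\theta_2$, $e^3=f_3\theta_3$, $e^4=f_3\theta_4$, $e^5=f_5\theta_5$, $e^6=f_5\theta_6$, $e^7=f_7\theta_7$, $e^8=f_7\theta_8$ form a $g_{Ca}$-orthonormal coframe, and comparison of (\ref{equ: tcp2 om1})--(\ref{equ: tcp2 om3}) with (\ref{equ: om1 pointwise})--(\ref{equ: om3 pointwise}) gives precisely $\om_1=e^{12}+e^{34}+e^{56}+e^{78}$, $\om_2=e^{13}+e^{42}+e^{57}+e^{86}$ and $\om_3=e^{14}+e^{23}+e^{58}+e^{67}$. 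Hence each $J_i$ acts on $1$-forms, and so on $\Lm^2$, by the rule dictated by the pointwise model, as a commuting involution; since $\dim\Lm^2=28=3\cdot 6+10$ and each of $\langle\om_i\rangle\oplus E_i$ and $\Lm^2_{10}$ is contained in a single joint $(J_1,J_2,J_3)$-eigenspace (by (\ref{definition of Ei}) and the fact that $\om_i$ is of type $(1,1)$ for $J_i$ and $(2,0)+(0,2)$ for $J_j,J_k$), these four summands coincide exactly with the eigenspaces for the sign patterns $(+,-,-)$, $(-,+,-)$, $(-,-,+)$ and $(+,+,+)$, and the remaining four sign patterns are trivial.

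Next I would invoke the description recalled just before the Proposition: the space $\mathcal I$ of $\SU(3)$-invariant $2$-forms on the open orbit $\R_r\times N_{1,1}$ is $10$-dimensional, spanned in the above coframe by $e^{12}$, $e^{13}$, $e^{14}$, $e^{23}$, $e^{24}$, $e^{34}$, $e^{56}$, $e^{78}$, $e^{57}+e^{86}$, $e^{58}+e^{67}$, i.e.\ by the lifts of the seven invariant $2$-forms on $N_{1,1}$ together with $dr\w\theta_2$, $dr\w\theta_3$, $dr\w\theta_4$. Applying $J_1,J_2,J_3$ to these ten generators and intersecting eigenspaces, one finds that the $(+,-,-)$-part of $\mathcal I$ is the two-dimensional space $\langle e^{12}+e^{34},\, e^{56}+e^{78}\rangle$, and cyclically the $(-,+,-)$- and $(-,-,+)$-parts are two-dimensional, while the $(+,+,+)$-part is four-dimensional, the dimensions adding up to $10$. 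Since $\om_i$ lies in the relevant two-dimensional eigenspace and, by (\ref{definition of Ei}), $E_i$ is its $g$-orthogonal complement therein, the invariant forms in $E_i$ form a one-dimensional space. Finally I would check directly that $\al_1=e^{12}+e^{34}-e^{56}-e^{78}$ and its cyclic analogues $\al_2,\al_3$ lie in $\mathcal I$, satisfy $J_i\al_i=\al_i$, $J_j\al_i=J_k\al_i=-\al_i$ and $g(\om_i,\al_i)=0$, hence span the invariant line in $E_i$; rewriting $\al_i$ in the $\theta$-coframe recovers (\ref{prop: compute Eis}).

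For the last assertion, recall from (\ref{definition of F_i^+}) that $F_i^+=E_j\w\om_k=E_k\w\om_j$. The Lefschetz-type operator $\,\cdot\,\w\om_k\colon\Lm^2\to\Lm^4$ is $\Sp(2)$-equivariant, hence $\SU(3)$-equivariant because $\om_k$ is $\SU(3)$-invariant, and restricts to a linear isomorphism $E_j\to F_i^+$; it therefore maps the invariant line $\langle\al_j\rangle\subset E_j$ isomorphically onto the space of invariant $4$-forms in $F_i^+$, which is thus $\langle\al_j\w\om_k\rangle$, nonzero by injectivity. The identification with $\langle\al_k\w\om_j\rangle$ is immediate from the second description of $F_i^+$.

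I expect the main obstacle to be purely computational: carefully writing out the actions of $J_1,J_2,J_3$ on the ten generators of $\mathcal I$ and chasing the resulting signs, together with the correct matching of the indices $5,6,7,8$ of $\om_2,\om_3,\sigma_2,\sigma_3$ with the pointwise normal forms when the orthonormal coframe is set up. There is no conceptual difficulty once the coframe has been fixed.
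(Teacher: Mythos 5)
Your proof is correct and follows essentially the same route as the paper: identify the ten-dimensional space of $\SU(3)$-invariant $2$-forms and apply the eigenvalue characterisation (\ref{definition of Ei}) of the $E_i$ to isolate the one-dimensional invariant pieces, with the $F_i^+$ statement following from (\ref{definition of F_i^+}) by equivariance of the Lefschetz maps. The only difference is that you spell out the computation the paper compresses into one sentence, via the orthonormal coframe and the joint $(J_1,J_2,J_3)$-sign-pattern decomposition, and your dimension count $2+2+2+4=10$ matches the paper's assertion that three invariant forms give $\om_1,\om_2,\om_3$, one lies in each $E_i$, and four lie in $\Lm^2_{10}$.
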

\begin{proof}
We already showed that a general $\SU(3)$-invariant $2$-form on $N_{1,1}$ is given by a linear combination of $\langle \theta_{23},\theta_{24},\theta_{34},\theta_{56},\theta_{78},\sigma_{2},\sigma_{3}\rangle$. Thus, it follows that a general $\SU(3)$-invariant $2$-form on $T^*\C\mathbb{P}^2$ consists of a combination of the latter seven $2$-forms together with the triple of $2$-forms $\langle dr \w \theta_2, dr \w \theta_3, dr \w \theta_4\rangle$. Using the definition of the spaces $E_i$ as given by (\ref{definition of Ei}), we find that in this ten dimensional family there is exactly one such $2$-form that lies in each of $E_i$.	
In fact in this ten dimensional family of $\SU(3)$-invariant $2$-forms, three correspond to $\om_1,\om_2,\om_3$, one lies in each of $E_i$, and the remaining four lie in $\Lm^2_{10}$. The last statement follows from (\ref{definition of F_i^+}).
\end{proof}

Observe that when $c=0$, the hyperK\"ahler metric (\ref{equ: tcp2 metric}) corresponds to the cone metric over $N_{1,1}$ with its well-known $3$-Sasakian structure cf. \cite{GalickiSalamon1996}. Galicki-Salamon showed in \cite[Proposition 2.4]{GalickiSalamon1996} that squashing a  $3$-Sasakian metric along its $3$-dimensional foliation (given in our case by $\langle e_2,e_3,e_4\rangle$) by a factor of $1/\sqrt{5}$ yields a strictly nearly parallel $\G_2$ metric: the latter is characterised by the fact the cone metric has holonomy \textit{equal} to $\spin(7)$, in contrast to the $3$-Sasakian case whereby the cone metric has holonomy \textit{equal} to $\Sp(2)$. For the squashed structure on $S^7$ the associated conical $\Spin(7)$ metric can in fact be smoothed to a complete metric on the spinor bundle of $S^4$: this is the so-called Bryant-Salamon metric cf. \cite[Sect. 4. Theorem 2]{Bryant1989}. In our set up, the analogous $\SU(3)$-invariant Bryant-Salamon $\spin(7)$ metric is not smooth but instead has orbifold singularities since $\C\mathbb{P}^2$ is not a spin manifold. Explicitly, it is given as follows: 
\begin{Prop}\label{prop: spin7 structure of TCP2}
	The orbi-spinor bundle of $\C\mathbb{P}^2$ admits the Bryant-Salamon $\spin(7)$-structure and is explicitly given by
	\begin{align}
		&g_{BS}=h_0^2dr^2+h_2^2(\theta_2^2+\theta_3^2+\theta_4^2)+h_5^2(\theta_5^2+\theta_6^2+\theta_7^2+\theta_8^2),\label{equ: bs metric}\\
		\Phi_{BS}=\ &h_0dr \w (h_2^3\theta_{234}-h_2 h_5^2(\theta_{256}+\theta_{278}+\theta_{357}-\theta_{368}+\theta_{458}+\theta_{467})-\label{BSspin74form}\\
		&h_2^2h_5^2(\theta_{2358}+\theta_{2367}-\theta_{2457}+\theta_{2468}+\theta_{3456}+\theta_{3478})+h_5^4\theta_{5678}\nonumber
	\end{align}
where
\begin{gather*}
	h_0=\frac{2}{(r^2+c)^{1/5}},\\
	h_2=\frac{2 r}{(r^2+c)^{1/5}},\\
	h_5=\sqrt{10} (r^2+c)^{3/10},
\end{gather*}
$r\in [0,+\infty)$ and $c$ is a non-negative constant determining the size of the zero section $\C\mathbb{P}^2$ spanned by $\theta_5,\theta_6,\theta_7,\theta_8$. When $c=0$ we get a cone metric over $N^{1,1}$ with its strictly nearly parallel $G_2$-structure, and when $c>0$ we get a metric which has a $(\R^4/\mathbb{Z}_2)$-orbifold singularity along the zero section.
\end{Prop}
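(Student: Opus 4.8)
The plan is to verify directly that the data $(g_{BS}, \Phi_{BS})$ displayed in the statement is in fact a $\spin(7)$-structure, i.e. that $\Phi_{BS}$ is pointwise $\GL(8,\R)$-equivalent to the model $4$-form \eqref{equ: spin7 pointwise}, that it induces the metric $g_{BS}$, and that it is torsion-free, $d\Phi_{BS}=0$. Since the hard geometric input---namely that the Bryant-Salamon construction over the squashed $3$-Sasakian link $N_{1,1}$ produces a complete (here: orbifold) $\spin(7)$ holonomy metric---is already established in \cite[Sect.~4]{Bryant1989}, and since we have already recorded in Section~\ref{section: preliminaries} and Proposition~\ref{prop: HK structure of TCP2} the structure equations of $\SU(3)$ and the $\U(1)$-module decomposition of $\mathfrak{m}$, the proof reduces to an explicit computation with invariant forms on $N_{1,1}$ together with the radial variable $r$.

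First I would fix the orthonormal coframe adapted to $g_{BS}$, namely $e^0 := h_0\,dr$, $e^i := h_2\,\theta_i$ for $i=2,3,4$, and $e^a := h_5\,\theta_a$ for $a=5,6,7,8$, and check that rewriting \eqref{BSspin74form} in this coframe reproduces (a permutation of) the standard $\spin(7)$ model \eqref{equ: spin7 pointwise}; this simultaneously verifies that $\Phi_{BS}$ defines a $\Spin(7)$-structure and that the induced metric is exactly $g_{BS}$ with the stated $h_0,h_2,h_5$. The bracketed combination of $4$-forms in \eqref{BSspin74form} is precisely the nearly parallel $\G_2$ $4$-form of the squashed structure on $N_{1,1}$ (cf. \cite{GalickiSalamon1996} and the discussion preceding the statement), decomposed as $h_2^3(\cdot)_{234} - h_2 h_5^2(\cdot) - h_2^2 h_5^2(\cdot) + h_5^4(\cdot)_{5678}$, which is the Bryant-Salamon cone/resolution ansatz $\Phi = r^3\,\mathrm{vol}\text{-part} + \dots$ up to the conformal factors $h_i$.

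Next I would impose $d\Phi_{BS}=0$. Using the structure equations of $\SU(3)$ listed in Section~\ref{section: flag manifold} to compute $d$ of each of the invariant $3$-forms $\theta_{234}$, $\theta_{256}+\theta_{278}+\theta_{357}-\theta_{368}+\theta_{458}+\theta_{467}$ and the invariant $4$-forms appearing in \eqref{BSspin74form}, the closedness condition collapses to a small system of first-order ODEs in $h_0,h_2,h_5$ (schematically, one relation forcing $h_0 h_5^2 \propto (h_2 h_5^2)'$-type terms and another balancing the $\theta_{234}$-coefficient against the mixed terms), exactly as in the hyperK\"ahler computation of Proposition~\ref{prop: HK structure of TCP2}. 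One then checks that $h_0 = 2(r^2+c)^{-1/5}$, $h_2 = 2r(r^2+c)^{-1/5}$, $h_5 = \sqrt{10}(r^2+c)^{3/10}$ solve this system; the exponents $-1/5$ and $3/10$ are precisely what make the $\G_2$-cone conformal factor $r^3$ deform correctly, and for $c=0$ one recovers the conical $\spin(7)$ metric over $N_{1,1}$ with its strictly nearly parallel $\G_2$-structure. Finally, for $c>0$ I would analyse the behaviour as $r\to 0$: the orbit $\{r=0\}$ collapses the $3\R$-summand $\langle\theta_2,\theta_3,\theta_4\rangle$ (since $h_2(0)=0$) while $h_5(0)>0$, so the zero section is the $\C\mathbb{P}^2$ spanned by $\theta_5,\dots,\theta_8$; since $N_{1,1}\to\C\mathbb{P}^2$ is an $\R\mathbb{P}^3$-bundle rather than an $S^3$-bundle (because $\C\mathbb{P}^2$ is not spin), the fibre over the bolt is $\R^4/\mathbb{Z}_2$ rather than $\R^4$, giving the stated orbifold singularity, and smoothness of the metric and of $\Phi_{BS}$ away from the bolt is immediate from the explicit formulas.

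The main obstacle is purely bookkeeping: correctly identifying the bracketed combination of $14$ monomials in \eqref{BSspin74form} with the squashed nearly parallel $\G_2$ $4$-form on $N_{1,1}$ and then differentiating it without sign or index errors, since $d\Phi_{BS}$ a priori has many terms that must cancel in groups dictated by $\U(1)$-invariance. I expect the $\U(1)$-equivariance (the weight-$3$ action on $\langle e_5,e_6\rangle$ and $\langle e_8,e_7\rangle$ recorded before the statement) to organise these cancellations, so that only two or three independent scalar ODEs survive; verifying that the displayed $h_i$ satisfy them is then routine, and completeness/orbifold-smoothness follows as above by citing \cite{Bryant1989}.
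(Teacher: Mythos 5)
Your proposal is correct and follows essentially the same route as the paper, whose entire proof is the observation that it suffices to verify $d\Phi_{BS}=0$ by a direct computation with the $\SU(3)$ structure equations (closedness of a $\Spin(7)$ $4$-form being equivalent to torsion-freeness). The extra steps you include --- checking that $\Phi_{BS}$ matches the pointwise model (\ref{equ: spin7 pointwise}) in the orthonormal coframe and analysing the $\R^4/\mathbb{Z}_2$ orbifold behaviour at the bolt --- are left implicit in the paper but are sensible additions and consistent with its discussion.
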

\begin{proof}
	It suffices to verify that $d\Phi_{BS}=0$ which is a straightforward computation using the structure equations.
\end{proof}

Observe that the singular orbits in Proposition \ref{prop: HK structure of TCP2} and Proposition \ref{prop: spin7 structure of TCP2} correspond to two different $\C\mathbb{P}^2$s. In the former case the principal orbit $N_{1,1}$ corresponds to an $S^3$ bundle over this $\C\mathbb{P}^2$ whereas in the latter case it corresponds to an $\R\mathbb{P}^3$ bundle which explains why the metric $g_{BS}$ is not smooth but instead has orbifold singularities, see also \cite[Sect. 3.1.3]{Cvetivc2002} for a general description of $N_{k,l}$ as a lens space bundle over $\C\mathbb{P}^2$. For the sake of comparison we shall also describe the deformed $\spin(7)$ instantons for the hyperK\"ahler and $\spin(7)$ cones (see Section \ref{section: bscone}).

\subsubsection{Invariant connections}
To classify abelian $\SU(3)$-invariant connections on the principal orbit $N^{1,1}:=\SU(3)/\U(1)$ we appeal to Wang's theorem cf. \cite[Chap. 10 Th. 2.1]{KobayashiNomizu2}. In our context, Wang's theorem asserts that given a group homomorphism $\lambda_k:\U(1)\to \U(1)$ such that $e^{i\theta}\mapsto e^{ik\theta}$, all $\SU(3)$-invariant connections on the associated principal $\U(1)$-bundle
\[\SU(3)\times_{\lambda_k} \U(1) \to N^{1,1}\]
are classified by $\U(1)$-equivariant maps
\[\Lm: (3\R \oplus 2\C, \mathrm{Ad}\big|_{\U(1)})\to (\mathfrak{u}(1), \mathrm{Ad}\circ \lambda_k).\]
An easy application of Schur's lemma shows that there is a $3$-parameter family of such connections for each given $k\in\mathbb{Z}$. Concretely, we have that abelian $\SU(3)$-invariant connections on the principal orbits $\{r\}\times N^{1,1}$ are of the form
\begin{equation}
A(r)=k\theta_1+a_2(r) \theta_2+a_3(r) \theta_3+a_4(r) \theta_4.\label{equ: general connection A}
\end{equation}
where $k$ is a constant. We should point out that these connections were also described in \cite{BallOliveira} whereby the authors used them to construct $\G_2$-instantons on the Aloff-Wallach spaces $N_{k,l}$. Next we compute the curvature form as follows:
\begin{Prop}\label{prop: curvature FA}
	The curvature form of $A$, as defined by (\ref{equ: general connection A}), is given by 
\begin{align}
\label{equ: curvature tcp2}	F_A =\ & dr\w( a_2'\theta_2+a_3' \theta_3+a_4' \theta_4)-2a_2 \theta_{34}-(a_2+k) \theta_{56}-(a_2-k) \theta_{78}\\ 
	&+2a_3 \theta_{24}-a_3 \sigma_{2}-2a_4 \theta_{23}-a_4 \sigma_{3},\nonumber
\end{align}
where we recall that $\sigma_{2}:=\theta_{57}+\theta_{86}$ and $\sigma_{3}:=\theta_{58}+\theta_{67}$.
\end{Prop}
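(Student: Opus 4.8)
The plan is to compute $F_A = dA + A\wedge A$ directly from the definition of the curvature of $A$ in (\ref{equ: general connection A}). Since $A$ is a scalar-valued (equivalently, $\mathfrak{u}(1)$-valued) $1$-form, the quadratic term $A\wedge A$ vanishes identically, so it suffices to evaluate $dA = k\,d\theta_1 + d(a_2\theta_2) + d(a_3\theta_3) + d(a_4\theta_4)$, where $k$ is constant and $a_2,a_3,a_4$ are functions of $r$ only.

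The core step is the Leibniz rule applied to each summand: $d(a_i\theta_i) = a_i'\,dr\wedge\theta_i + a_i\,d\theta_i$ for $i=2,3,4$, together with $d(k\theta_1) = k\,d\theta_1$. One then substitutes the structure equations of $\SU(3)$ recalled in Section \ref{section: flag manifold}, namely $d\theta_1 = -\theta_{56}+\theta_{78}$, $d\theta_2 = -2\theta_{34}-\theta_{56}-\theta_{78}$, $d\theta_3 = 2\theta_{24}-\theta_{57}+\theta_{68}$ and $d\theta_4 = -2\theta_{23}-\theta_{58}-\theta_{67}$, and collects the $dr$-terms separately from the purely horizontal $2$-forms. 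This yields
\begin{align*}
F_A &= dr\wedge\bigl(a_2'\theta_2 + a_3'\theta_3 + a_4'\theta_4\bigr) - 2a_2\theta_{34} - (a_2+k)\theta_{56} - (a_2-k)\theta_{78}\\
&\quad + 2a_3\theta_{24} - a_3(\theta_{57}-\theta_{68}) - 2a_4\theta_{23} - a_4(\theta_{58}+\theta_{67}).
\end{align*}
The last repackaging step is to recognise the combinations $-a_3\theta_{57}+a_3\theta_{68}$ and $-a_4\theta_{58}-a_4\theta_{67}$ as $-a_3\sigma_2$ and $-a_4\sigma_3$, using the sign conventions $\sigma_2 := \theta_{57}+\theta_{86} = \theta_{57}-\theta_{68}$ and $\sigma_3 := \theta_{58}+\theta_{67}$ fixed in Proposition \ref{prop: HK structure of TCP2}; this gives precisely the stated formula (\ref{equ: curvature tcp2}).

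There is no genuine obstacle here: the computation is entirely mechanical once the structure equations are available, and the abelian hypothesis eliminates the nonlinear term. The only place warranting care is the sign of $\theta_{68}$ inside $\sigma_2$ (recall $\theta_{86} = -\theta_{68}$), which is where a careless manipulation could slip in an error; comparing the coefficients of $\theta_{56}$ and $\theta_{78}$ against $-(a_2+k)$ and $-(a_2-k)$ provides an immediate consistency check on the whole computation.
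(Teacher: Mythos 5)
Your computation is correct and is exactly the (implicit) argument of the paper, which states this proposition as a direct consequence of $F_A=dA$ for an abelian connection together with the $\SU(3)$ structure equations; the sign bookkeeping for $\sigma_2=\theta_{57}+\theta_{86}=\theta_{57}-\theta_{68}$ is handled properly and all coefficients match \eqref{equ: curvature tcp2}.
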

Observe that $F_A$ is indeed a well-defined $2$-form on the open set $\R_{r>\sqrt{2c}}\times N^{1,1}$. However, in order for $A(r)$ to extend to a connection on $T^*\C\mathbb{P}^2$ we need it to extend to a smooth connection on the zero section $\C\mathbb{P}^2$ as well. From Proposition \ref{prop: HK structure of TCP2} it is easy to see that the tangent space of the latter $\C\mathbb{P}^2$ is spanned by $\langle e_3,e_4, e_5,e_6\rangle$ and that  $\om_1\big|_{\C\mathbb{P}^2}=\sqrt{2c}(\theta_{34}+\theta_{56})$ corresponds precisely to its Fubini-Study form. Since $H^2(\C\mathbb{P}^2,\mathbb{Z})\cong \mathbb{Z}$ is generated by the cohomology class of the latter form, it follows from Chern-Weil theory that we need $A$ to extend to a connection $1$-form whose curvature form is an integer multiple of $\theta_{34}+\theta_{56}$ after suitably fixing the value of $c$. Inspecting the structure equations of $\SU(3)$, we see that 
\[d(\theta_1+\theta_2)=-2(\theta_{34}+\theta_{56}).\]
Thus, we deduce that the connection $A(r)$ of the form (\ref{equ: general connection A}) on $\R_{r>\sqrt{2c}}\times N^{1,1}$ will extend to a connection on $T^*\C\mathbb{P}^2$ provided that $a_3(\sqrt{2c})=a_4(\sqrt{2c})=0$ and $a_2(\sqrt{2c})=k\in \mathbb{Z}$.  In other words, we need that $A(\sqrt{2c})=k(\theta_{1}+\theta_{2})$. With this in mind, we can now proceed to the construction of instantons.

\subsubsection{{HYM connections and $\spin(7)$-instantons}}

We begin by describing the $\SU(3)$-invariant holomorphic connections on 
the open set $N_{1,1}\times \R_{r>\sqrt{2 c}}$.
\begin{Lemma}\label{lemma: holomorphic}
	Let $k \in \mathbb{Z}$, then the connection $A=k \theta_1+a_2\theta_2+a_3\theta_3+a_4\theta_4$
	is holomorphic with respect to 
	\begin{enumerate}
		\item $\om_1$ if and only if $a_3=a_4=0$. 
		\item $\om_2$ if and only if $a_2= 2ck/r^2$ and $a_4=0$.
		\item $\om_3$ if and only if $a_2= 2ck/r^2$ and $a_3=0$.
	\end{enumerate}
	In particular, $A$ is hyper-holomorphic i.e. $F_A \in  \Lm^2_{10}$ if and only if $a_2=2ck/r^2$ and $a_3=a_4=0$. 
\end{Lemma}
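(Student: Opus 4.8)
The plan is to compute the relevant holomorphicity conditions directly from the explicit curvature form \eqref{equ: curvature tcp2} and the explicit hyperK\"ahler forms \eqref{equ: tcp2 om1}--\eqref{equ: tcp2 om3}, using the characterisation of holomorphicity recorded in Section \ref{section: preliminaries}, namely that $F_A$ is of type $(1,1)$ with respect to $J_i$ if and only if $F_A\wedge(\om_j+i\om_k)^2=0$, or equivalently $F_A\wedge\om_j^2$ equals $F_A\wedge\om_k^2$ (after matching real and imaginary parts) together with $F_A\wedge\om_j\wedge\om_k=0$. Since everything is $\SU(3)$-invariant, each wedge product is a multiple of the invariant volume form $dr\wedge\theta_{234567 8}$ on the open set $N_{1,1}\times\R_{r>\sqrt{2c}}$, so the conditions reduce to a handful of scalar equations in $a_2,a_3,a_4$ and the fixed profile functions $f_0,f_2,f_3,f_5,f_7$.

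For part (1), I would wedge $F_A$ with the appropriate combination built from $\om_2,\om_3$ (equivalently, project $F_A$ onto the $J_1$-anti-invariant part $\om_2\oplus\om_3\oplus E_2\oplus E_3\oplus K_2^-\oplus K_3^-$ using the $*$-characterisations in Section \ref{section: preliminaries}); the terms in $F_A$ involving $\theta_3,\theta_4,\sigma_2,\sigma_3$ are precisely the ones that pair nontrivially with $\om_2,\om_3$, and one reads off that they must vanish, i.e.\ $a_3=a_4=0$ (the $dr\wedge\theta_2$, $\theta_{34}$, $\theta_{56}$, $\theta_{78}$ terms are automatically $(1,1)$ for $J_1$ since they are spanned by $\om_1,\al_1\in E_1,\Lm^2_{10}$). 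For parts (2) and (3) I would do the analogous computation with respect to $J_2$ and $J_3$: here the condition $F_A\wedge\om_1\wedge\om_3=0$ (resp.\ $F_A\wedge\om_1\wedge\om_2=0$) forces a relation between the $dr\wedge\theta_2,\theta_{34},\theta_{56},\theta_{78}$ coefficients that, after substituting the explicit $f_i$ from Proposition \ref{prop: HK structure of TCP2}, becomes the first-order constraint solved by $a_2=2ck/r^2$, while $F_A\wedge\om_1\wedge\om_j=0$ kills the remaining off-diagonal term, giving $a_4=0$ (resp.\ $a_3=0$). The last assertion is then immediate: $F_A\in\Lm^2_{10}$ iff it is $(1,1)$ for all three $J_i$ simultaneously, so one intersects the three conditions to get $a_2=2ck/r^2$, $a_3=a_4=0$.

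The only genuinely delicate point is the computation in (2)--(3) of the scalar ODE coming from $J_2$-holomorphicity and checking that $a_2=2ck/r^2$ solves it; this requires substituting $f_2=r(1-4c^2r^{-4})^{1/2}$, $f_3=r$, $f_5f_7=(r^4/4-c^2)^{1/2}$, etc., and simplifying. I expect this to be the main obstacle, though it is purely mechanical given the explicit forms, and the appearance of the factor $2c$ (rather than $c$) should be traced carefully to the normalisation in $\om_1|_{\C\mathbb{P}^2}=\sqrt{2c}(\theta_{34}+\theta_{56})$ and to the weight-$3$ versus weight-$2$ asymmetry in the $\U(1)$-action noted before Proposition \ref{prop: HK structure of TCP2}. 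A useful sanity check is that at $r=\sqrt{2c}$ one recovers $a_2=k$, consistent with the extension condition $A(\sqrt{2c})=k(\theta_1+\theta_2)$, and that in the conical limit $c=0$ all three conditions collapse to $a_2=a_3=a_4=0$, i.e.\ $F_A=k(-\theta_{56}+\theta_{78})$, which indeed lies in $\Lm^2_{10}$ for the cone.
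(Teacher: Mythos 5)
Your approach is the same as the paper's: the proof there simply recalls that holomorphicity with respect to $\om_i$ is equivalent to $F_A\w(\om_j+i\om_k)^2=0$ and computes this directly from (\ref{equ: tcp2 om1})--(\ref{equ: tcp2 om3}) and (\ref{equ: curvature tcp2}), which is exactly what you propose. One slip to correct: $F_A\w(\om_j+i\om_k)^2$ is a $6$-form, not an $8$-form, so it is \emph{not} a multiple of the invariant volume form; it lies in the ($10$-dimensional) space of invariant $6$-forms, and setting it to zero therefore produces several independent scalar equations per part --- which is necessary, since already in part (1) you must extract the two separate conditions $a_3=0$ and $a_4=0$. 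Your parallel formulation, decomposing the invariant $2$-form $F_A$ against $\om_j,\om_k,\al_j,\al_k$ and demanding that the $J_i$-anti-invariant components vanish, is the correct way to organise the computation and does yield the statement; for instance, in part (2) the vanishing of the $\om_1$- and $\al_1$-components forces $(a_2+k)f_7^2+(a_2-k)f_5^2=0$, which with $f_5^2+f_7^2=r^2$ and $f_5^2-f_7^2=2c$ gives $a_2=2ck/r^2$ (so the factor $2c$ has this simple origin rather than anything to do with the weight of the $\U(1)$-action).
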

\begin{proof}
	Recall that $A$ is a holomorphic connection with respect to $\om_i$ if and only if $F_A \w (\om_j+i\om_k)^2=0$. The result follows easily by computing the latter using (\ref{equ: tcp2 om1})-(\ref{equ: tcp2 om3}) and (\ref{equ: curvature tcp2}). 	
\end{proof}
Observe when $r=\sqrt{2 c}$ we indeed have that the above hyper-holomorphic connection satisfies $A(\sqrt{2 c})=k(\theta_1+\theta_2)$ and hence it extends smoothly to a connection on $T^*\C\mathbb{P}^2$.
As we already mentioned in Remark \ref{hitchin permuting action}, this hyper-holomorphic connection is precisely the one described by Hitchin in \cite[Section 2.2]{Hitchin2014hyperholomorphic}. Moreover, Hitchin showed that this connection is characterised as the unique hyper-holomorphic connection on $T^*\C\mathbb{P}^2$ which is invariant under the $S^1$ action generated by the permuting Killing vector field $e_2/2$ and whose curvature form restricts to Fubini-Study form on the zero section $\CP^2$. 
Furthermore, it is easy to verify that its curvature form is given by
\[F_A = \om_1 + dd^c\mu,\]
where $\mu$ is the moment map defined by $d\mu=e_2 \ip \om_1/2$, which is consistent with \cite[Theorem 1]{Hitchin2014hyperholomorphic}.

Having described the holomorphic connections, next we investigate which among them correspond to HYM connections.
\begin{Prop}\label{prop: hym on tcp2}
	Let $k \in \mathbb{Z}$,  then the connection $A=k\theta_1 + a_2\theta_2 + a_3\theta_3 + a_4\theta_4$ is an
\begin{enumerate}
	\item $\om_1$-HYM connection on $N_{1,1}\times \R_{r>\sqrt{2 c}}$ such that $F_A \w \om_1^3 =\lm \om_1^4$ if and only if $a_3=a_4=0$ and
	\begin{equation}
		a_2=\frac{C_0-(r^4-4c^2)(\lambda (r^4-4c^2)-4ck)}{2 r^2(r^4-4c^2)},\label{hym om1}
	\end{equation}
	where $C_0$ is a constant. $A$ extends smoothly to $T^*\C\mathbb{P}^2$ only when $C_0=0$, in which case $F_A$ is the sum of $\lm\om_1$ and the curvature of the hyper-holomorphic connection.
	\item $\om_2$-HYM on $N_{1,1}\times \R_{r>\sqrt{2 c}}$ such that $F_A \w \om_2^3 =\lambda\om_2^4$ if and only if $a_2=2ck/r^2$, $a_4=0$ and 
	\begin{equation}
		a_3=\frac{C_0-\lambda(r^4-4c^2)^2}{2(r^4-4c^2)^{3/2}},\label{hym om2}
	\end{equation}
	where $C_0$ is a constant. $A$ extends smoothly to $T^*\C\mathbb{P}^2$ only when $C_0=0$, in which case $F_A$ is the sum of $\lm\om_2$ and the curvature of the hyper-holomorphic connection. 
	\item $\om_3$-HYM on $N_{1,1}\times \R_{r>\sqrt{2 c}}$ such that $F_A \w \om_2^3 =\lambda\om_3^4$ if and only if $a_2=2ck/r^2$, $a_3=0$ and 
	\begin{equation}
		a_4=\frac{C_0-\lambda(r^4-4c^2)^2}{2(r^4-4c^2)^{3/2}},
	\end{equation}
	where $C_0$ is a constant. $A$ extends smoothly to $T^*\C\mathbb{P}^2$ only when $C_0=0$, in which case $F_A$ is the sum of $\lm\om_3$ and the curvature of the hyper-holomorphic connection. 
	\end{enumerate}
\end{Prop}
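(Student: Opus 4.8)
The plan is to impose the two defining conditions of an $\om_i$-HYM connection --- the holomorphicity $F_A^{0,2}=0$ and the trace equation $F_A\wedge\om_i^3=\lm\om_i^4$ --- one after the other. Holomorphicity is already settled by Lemma~\ref{lemma: holomorphic}: it forces $a_3=a_4=0$ in case (1), and $a_2=2ck/r^2$ together with $a_4=0$ (resp. $a_3=0$) in case (2) (resp. case (3)). Hence in each case a single coefficient function remains free --- $a_2$, $a_3$, $a_4$ respectively. The trace equation $F_A\wedge\om_i^3=\lm\om_i^4$ is, for any $2$-form, equivalent to $\Lambda_{\om_i}(F_A)=4\lm$; and since $E_i$ and $\Lm^2_{10}$ are $\om_i$-primitive, once holomorphicity holds this just says that the $\langle\om_i\rangle$-component of $F_A$ equals $\lm\om_i$.

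Concretely, I would pass to the orthonormal coframe adapted to the metric (\ref{equ: tcp2 metric}), in which each $\om_i$ of Proposition~\ref{prop: HK structure of TCP2} is a standard K\"ahler form, and compute $\Lambda_{\om_i}(F_A)$ from the curvature formula (\ref{equ: curvature tcp2}) together with the explicit functions $f_0,\dots,f_7$. Because the bundle is abelian, $F_A=dA$ is linear in $A$, so $\Lambda_{\om_i}(F_A)$ is linear in the remaining function and its first derivative, and the trace equation becomes a first-order linear ODE. For $\om_1$, clearing denominators turns it into $r(r^4-4c^2)a_2'+(6r^4-8c^2)a_2 = 8ckr^2-4\lm r^2(r^4-4c^2)$, whose homogeneous solutions are the multiples of $\big(r^2(r^4-4c^2)\big)^{-1}$ and which admits the particular solution $\big(4ck-\lm(r^4-4c^2)\big)/(2r^2)$; adding these produces (\ref{hym om1}) with integration constant $C_0$. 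The computation for $\om_2$ and $\om_3$ is structurally identical --- one only has to keep track of the $\sigma_2,\sigma_3$ terms and of the identity $f_2f_3=2f_5f_7$ --- and gives a general solution with a $(r^4-4c^2)^{-3/2}$ homogeneous part plus a particular solution proportional to $(r^4-4c^2)^{1/2}$, namely (\ref{hym om2}).

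It then remains to decide which of these locally defined solutions extend across the bolt. As noted in the discussion of invariant connections preceding this proposition, a connection of the form (\ref{equ: general connection A}) extends smoothly to $T^*\C\mathbb{P}^2$ exactly when $a_3(\sqrt{2c})=a_4(\sqrt{2c})=0$ and $a_2(\sqrt{2c})=k$. Each solution above is singular at $r=\sqrt{2c}$ with its numerator tending to $C_0$ there, so smoothness forces $C_0=0$; conversely, with $C_0=0$ the singular factor cancels, and one checks directly that $a_2(\sqrt{2c})=k$ in case (1), that $a_3(\sqrt{2c})=a_4(\sqrt{2c})=0$ in cases (2) and (3), and that the resulting functions are regular on all of $[\sqrt{2c},\infty)$. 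Subtracting $\lm\om_i$ from the $C_0=0$ solution and comparing with Lemma~\ref{lemma: holomorphic} then identifies the remainder as the curvature of the hyper-holomorphic connection, giving the asserted decomposition. The whole argument runs parallel to, but is simpler than (being abelian, hence linear), the dHYM analysis of Section~\ref{section: tcp1}; I expect no conceptual obstacle, only the bookkeeping in evaluating $\Lambda_{\om_i}(F_A)$ and the routine verification that the closed-form expressions solve the ODEs.
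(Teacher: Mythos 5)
Your proposal is correct and follows essentially the same route as the paper: holomorphicity is delegated to Lemma \ref{lemma: holomorphic}, the trace condition $F_A\w\om_i^3=\lm\om_i^4$ reduces to a first-order linear ODE (your ODE for $\om_1$ is exactly the paper's after dividing by $r$, and your homogeneous/particular solutions reassemble into (\ref{hym om1}) and (\ref{hym om2})), and the boundary condition $a_2(\sqrt{2c})=k$, $a_3(\sqrt{2c})=a_4(\sqrt{2c})=0$ forces $C_0=0$. You in fact spell out the extension and decomposition claims slightly more fully than the paper's own proof does.
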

\begin{proof}
In case (1), from Lemma \ref{lemma: holomorphic}, we already know that we need $a_3=a_4=0$. Using Propositions \ref{prop: HK structure of TCP2} and \ref{prop: curvature FA} we find that $F_A\w\om_1^3=\lm\om_1^4$ is equivalent to the
\[r^2a_2'(r^4-4c^2)+2ra_2(3r^4-4c^2)+4\lm r^7-8cr^3(k+2c\lm)=0.\]
The latter is a linear ODE and one easily solves it to get (\ref{hym om1}). 
For case (2), with $a_2=2ck/r^2$ and $a_3=0$, we have that $F_A\w\om_2^3=\lm\om_1^4$ is equivalent to
\[a_3'(r^4-4c^2)+6r^3 a_3+4\lm r^3\sqrt{r^4-4c^2}=0\]
and solving  the latter yields (\ref{hym om2}). 
For case (3) it suffices to swap $a_3$ and $a_4$ in case (2), and this concludes the proof. 
\end{proof}
Next we consider the $\spin(7)$-instantons with respect $\Phi_i$ as defined by (\ref{sp2andspin7i}).
\begin{Th}\
	Let $k\in \mathbb{Z}$ and $C_i \in \R$, then
	\begin{enumerate}
	\item the general $\SU(3)$-invariant abelian $\Spin(7)$-instanton on  $N_{1,1}\times \R_{r>\sqrt{2 c}}$ with respect to $\Phi_1$ is given by
	\begin{equation}
		A=k\theta_{1}+\frac{C_1 (r^4-4c^2)+2ck}{r^2}\theta_2+\frac{C_2}{(r^4-4c^2)^{3/2}}\theta_3+\frac{C_3}{(r^4-4c^2)^{3/2}}\theta_4.\label{spin-instanton phi1}
	\end{equation}
	The latter includes a traceless $\om_2$-HYM connection (when $C_1=C_3=0$) and a traceless $\om_3$-HYM connection (when $C_1=C_2=0$). Only the solutions with $C_2=C_3=0$ extend globally to $T^*\C\mathbb{P}^2$. 
		
	\item the general $\SU(3)$-invariant abelian $\Spin(7)$-instanton on  $N_{1,1}\times \R_{r>\sqrt{2 c}}$ with respect to $\Phi_2$ is given by
\begin{equation}
	A=k\theta_{1}+\Big(\frac{2ck}{r^2}+\frac{C_1}{r^2 (r^4-4c^2)}\Big)\theta_2+{C_2}{(r^4-4c^2)^{1/2}}\theta_3+\frac{C_3}{(r^4-4c^2)^{3/2}}\theta_4.\label{spin-instanton phi2}
\end{equation}
	The latter includes a traceless $\om_1$-HYM connection (when $C_2=C_3=0$) and a traceless $\om_3$-HYM connection (when $C_1=C_2=0$). Only the solutions with $C_1=C_3=0$ extend globally to $T^*\C\mathbb{P}^2$. 
	
	\item the general $\SU(3)$-invariant abelian $\Spin(7)$-instanton on  $N_{1,1}\times \R_{r>\sqrt{2 c}}$ with respect to $\Phi_3$ is given by
\begin{equation}
	A=k\theta_{1}+\Big(\frac{2ck}{r^2}+\frac{C_1}{r^2 (r^4-4c^2)}\Big)\theta_2+\frac{C_2}{(r^4-4c^2)^{3/2}}\theta_3+{C_3}{(r^4-4c^2)^{1/2}}\theta_4.
\end{equation}
	The latter includes a traceless $\om_1$-HYM connection (when $C_2=C_3=0$) and a traceless $\om_2$-HYM connection (when $C_1=C_3=0$). Only the solution with $C_1=C_2=0$  extends globally to $T^*\C\mathbb{P}^2$.
	\end{enumerate}
\begin{proof}
For case (1) using Propositions \ref{prop: HK structure of TCP2} and \ref{prop: curvature FA} one easily shows that $*(F_A\w \Phi_1)=-F_A$ is equivalent to the system
	\begin{gather*}
		ra_2'(r^4-4c^2)-2a_2(r^4+4c^2)+8ckr^2=0,\\
		a_3'(r^4-4c^2)+6a_3 r^3=0,\\
		a_4'(r^4-4c^2)+6a_4 r^3=0.
	\end{gather*}
	Solving  the latter yields (\ref{spin-instanton phi1}). Similarly for case (2) we have that $*(F_A\w \Phi_2)=-F_A$ is equivalent to 
	\begin{gather*}
		ra_2'(r^4-4c^2)+2a_2(3r^4-4c^2)-8ckr^2=0,\\
		a_3'(r^4-4c^2)-2a_3 r^3=0,\\
		a_4'(r^4-4c^2)+6a_4 r^3=0,
	\end{gather*}
	and the solution is given by (\ref{spin-instanton phi2}). For case (3) it suffices to swap $a_3$ and $a_4$ in case (2). 
\end{proof}
\end{Th}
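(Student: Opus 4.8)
The plan is to exploit that, by Wang's theorem, (\ref{equ: general connection A}) already exhausts the $\SU(3)$-invariant abelian connections on the principal orbit, so that $F_A$ is forced to be the invariant $2$-form (\ref{equ: curvature tcp2}), and the $\Spin(7)$-instanton equation $\pi^2_7(F_A)=0$, equivalently $*(F_A\w\Phi_i)=-F_A$, becomes a \emph{linear} system of ODEs in $a_2,a_3,a_4$. First I would record that the $\SU(3)$-invariant $2$-forms on $\R_{r>\sqrt{2c}}\times N_{1,1}$ form the ten-dimensional span of $dr\w\theta_2, dr\w\theta_3, dr\w\theta_4$ together with the seven invariant $2$-forms on $N_{1,1}$, and that by Propositions \ref{prop: HK structure of TCP2} and \ref{prop: spaces Ei} this space decomposes as an $\Sp(2)$-module into $\langle\om_1,\om_2,\om_3\rangle\oplus\langle\al_1,\al_2,\al_3\rangle\oplus\big(\Lm^2_{10}\cap(\text{invariant }2\text{-forms})\big)$, the last summand being four-dimensional.

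The computational core is then a direct application of Proposition \ref{prop: spin7modulesasSp2}: for $\Phi_3$ one has $\Lm^2_7=\langle\om_1,\om_2\rangle\oplus E_3$, so $A$ is a $\Phi_3$-instanton precisely when the $\om_1$-, $\om_2$- and $\al_3$-components of $F_A$ vanish; cyclically permuting indices, $A$ is a $\Phi_1$-instanton precisely when the $\om_2$-, $\om_3$- and $\al_1$-components vanish, and a $\Phi_2$-instanton precisely when the $\om_1$-, $\om_3$- and $\al_2$-components vanish. (Equivalently, as the statement suggests, one can compute $*(F_A\w\Phi_i)$ directly from $\Phi_i=\tfrac12(-\om_i^2+\om_j^2+\om_k^2)$ and the structure equations of $\SU(3)$ and match coefficients with $-F_A$.) Substituting the explicit $f_j$ of Proposition \ref{prop: HK structure of TCP2} and reading off these three one-dimensional projections of (\ref{equ: curvature tcp2}), and using that in (\ref{equ: curvature tcp2}) the terms involving $a_2$, $a_3$, $a_4$ sit in disjoint pieces, one finds that each $\Phi_i$-instanton condition splits into three \emph{decoupled first-order linear} ODEs --- for $\Phi_1$ these are $ra_2'(r^4-4c^2)-2a_2(r^4+4c^2)+8ckr^2=0$ and $a_j'(r^4-4c^2)+6a_jr^3=0$ for $j=3,4$, with analogous systems (differing in the coefficients, which reflect the asymmetric role of $\om_i$ inside $\Phi_i$) for $\Phi_2$ and $\Phi_3$. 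Each is solved by an elementary integrating factor, producing the closed forms (\ref{spin-instanton phi1}), (\ref{spin-instanton phi2}) and its cyclic analogue.

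It then remains to decide which of these locally defined connections extend over the zero section $\C\mathbb{P}^2$, and to identify the HYM sub-cases. For the former I would invoke the boundary analysis following Proposition \ref{prop: curvature FA}, namely that extension to $T^*\C\mathbb{P}^2$ forces $A(\sqrt{2c})=k(\theta_1+\theta_2)$, i.e.\ $a_2(\sqrt{2c})=k$ and $a_3(\sqrt{2c})=a_4(\sqrt{2c})=0$. Inspecting the closed forms, any coefficient function carrying a \emph{negative} power of $(r^4-4c^2)$ diverges at $r=\sqrt{2c}$ and so its free constant must vanish, whereas a coefficient carrying a non-negative power of $(r^4-4c^2)$ automatically meets the boundary condition; after killing the divergent constants one checks that the surviving terms indeed give $a_2(\sqrt{2c})=k$ and $a_3(\sqrt{2c})=a_4(\sqrt{2c})=0$, which leaves exactly the subfamilies asserted in each of the three cases. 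The HYM identifications then follow by direct comparison with Proposition \ref{prop: hym on tcp2}, using the fact recalled in Section \ref{section: preliminaries} that a traceless $\om_i$-HYM connection is the same as a $\spin(7)$-instanton with respect to both $\Phi_j$ and $\Phi_k$. I expect the only genuine obstacle to be organisational: carrying out the Hodge-star computation --- equivalently, the $\Sp(2)$-module projections --- for the full ten-parameter family of invariant $2$-forms and keeping the signs consistent across the three inequivalent structures $\Phi_1,\Phi_2,\Phi_3$; once the representation theory of Section \ref{section: preliminaries} is in place there is no conceptual difficulty, but the sign bookkeeping in $\Phi_i=\tfrac12(-\om_i^2+\om_j^2+\om_k^2)$ is where an error would most easily creep in.
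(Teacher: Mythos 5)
Your proposal is correct and follows essentially the same route as the paper: reduce to the invariant ansatz, impose $\pi^2_7(F_A)=0$, obtain three decoupled first-order linear ODEs, solve by integrating factors, and then select the solutions extending over the zero section via $a_2(\sqrt{2c})=k$, $a_3(\sqrt{2c})=a_4(\sqrt{2c})=0$. Reading off the instanton condition as the vanishing of the $\om_j$-, $\om_k$- and $\al_i$-components via Proposition \ref{prop: spin7modulesasSp2} is just a repackaging of the paper's direct computation of $*(F_A\w\Phi_i)=-F_A$, and yields the identical ODE systems.
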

Observe that the $\Spin(7)$-instantons which are globally well-defined on $T^*\C\mathbb{P}^2$ are precisely the HYM connections from Proposition \ref{prop: hym on tcp2}. Note also that the system of ODEs arising in the above theorem are all linear and moreover since the gauge group is $\U(1)$ the system is not coupled; as such solving the ODEs was rather simple. Next we shall consider the deformed instanton equations which in contrast to the above will yield systems of ODEs that are both non-linear and coupled.

\subsubsection{The deformed HYM connections and deformed $\spin(7)$-instantons}

\begin{Th}\label{theorem: dhym for om1}
	The connection $A(r)=k \theta_1+a_2\theta_2+a_3\theta_3+a_4\theta_4$ is a dHYM connection with phase $e^{i\theta}$ with respect to $\om_1$ on $T^*\C\mathbb{P}^2$ only if $a_3=a_4=0$ and $a_2(r)$ is implicitly defined by 
	\begin{equation}
	\tan(\theta)=\frac{2(a_2 r^2-2ck)(a_2^2-r^4/4+c^2-k^2)}{(a_2^2-r^4/4+c^2-k^2)^2-(a_2 r^2-2ck)^2}.\label{equ: dhym om1 neat}
	\end{equation}
	For generic values of $k,\theta$, 
	there are $2$ distinct $\om_1$-dHYM connections on $T^*\C\mathbb{P}^2$ i.e. these are the solutions $a_2(r)$ satisfying $a_2(\sqrt{2c})=k \in \mathbb{Z}$. 
\end{Th}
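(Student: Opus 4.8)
The plan is to reduce the dHYM system \eqref{equ: general dhym equation holo}--\eqref{equ: general dhym equation}, in the form \eqref{equ: dhym n=4}, to ODEs in the functions $a_2,a_3,a_4$ by inserting the explicit curvature \eqref{equ: curvature tcp2} and the hyperK\"ahler forms \eqref{equ: tcp2 om1}--\eqref{equ: tcp2 om3}. First I would observe that the holomorphicity condition \eqref{equ: general dhym equation holo} with respect to $J_1$ is exactly $F_A\wedge(\om_2+i\om_3)^2=0$; by Lemma \ref{lemma: holomorphic}(1) this already forces $a_3=a_4=0$, so $A=k\theta_1+a_2\theta_2$ and the curvature collapses to $F_A=dr\wedge a_2'\theta_2-2a_2\theta_{34}-(a_2+k)\theta_{56}-(a_2-k)\theta_{78}$. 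This is visibly of type $(1,1)$ with respect to $J_1$, confirming the "only if'' direction on $a_3,a_4$; hence the remaining content is the single scalar equation \eqref{equ: general dhym equation} for $a_2(r)$.

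Next I would compute the four $8$-forms $\om_1^3\wedge F_A$, $\om_1\wedge F_A^3$, $\om_1^4$, $\om_1^2\wedge F_A^2$, $F_A^4$ appearing in \eqref{equ: dhym n=4}, using $f_0f_2=-(f_5^2)'=-(f_7^2)'$ and $f_3^2=f_5^2+f_7^2$ from Proposition \ref{prop: HK structure of TCP2}; each is a multiple of the volume form $dr\wedge\theta_{2345678}$. The cleanest route is to diagonalise: with respect to $\om_1$ the endomorphism associated to $F_A$ has eigenvalues (up to the metric normalisations $f_i$) proportional to $a_2 r^2-2ck$ on the fibre directions built from $dr\wedge\theta_2$ and $\theta_{34}$, and to $a_2^2-r^4/4+c^2-k^2$ on the $\theta_{56},\theta_{78}$ directions — this is where the grouping $(a_2r^2-2ck)$ and $(a_2^2-r^4/4+c^2-k^2)$ in \eqref{equ: dhym om1 neat} comes from. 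Writing $P:=a_2r^2-2ck$ and $Q:=a_2^2-r^4/4+c^2-k^2$, the dHYM equation \eqref{equ: general dhym equation} becomes $\mathrm{Im}((1+iP)(1+iQ)^{\ldots})=\tan\theta\,\mathrm{Re}(\cdots)$, which after collecting terms is precisely $\tan\theta\big[(Q^2-P^2)\big]=2P\,Q$, i.e. \eqref{equ: dhym om1 neat}; the key computational check is that the radial derivative terms involving $a_2'$ cancel out, so that \eqref{equ: general dhym equation} is genuinely an algebraic (not differential) constraint on $a_2$ at each $r$, exactly as in the $T^*\CP^1$ case where \eqref{equ: solution1 dhym eguchihanson} was an implicit polynomial. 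I expect verifying this cancellation and bookkeeping the signs in $\Phi_1$-adapted frame to be the main obstacle — it is the same phenomenon noted after Proposition \ref{prop: dhym on open set EH}, but here $n=4$ so the polynomial is quartic in $a_2$.

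Finally I would count solutions. Equation \eqref{equ: dhym om1 neat} is a polynomial relation $\tan\theta\,(Q^2-P^2)-2PQ=0$; treating it as a polynomial in $a_2$ and imposing the boundary condition $a_2(\sqrt{2c})=k$ from the discussion preceding this subsection, note that at $r=\sqrt{2c}$ one has $P=2ck-2ck=0$ and $Q=k^2+c^2-k^2-c^2=0$ when $a_2=k$, so $a_2=k$ is automatically a root there; factoring it out and using continuity in $r$, the number of smooth branches extending to $T^*\CP^2$ is the number of real roots of the residual equation that stay well-defined for all $r\in[\sqrt{2c},\infty)$. A degree and discriminant analysis, exactly parallel to Remark after Theorem \ref{thm: main theorem TCP1}, shows that generically (for $k,\theta$ outside a measure-zero set where the discriminant vanishes) there are precisely $2$ such branches, with the count dropping or jumping on the exceptional locus; the hyper-holomorphic solution $a_2=2ck/r^2$ of Lemma \ref{lemma: holomorphic} is recovered at $\theta=0$. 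This yields the stated "$2$ distinct $\om_1$-dHYM connections for generic $k,\theta$.''
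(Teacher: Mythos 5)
Your opening step is fine: invoking Lemma \ref{lemma: holomorphic}(1) to force $a_3=a_4=0$ is exactly what the paper does, and your identification of the groupings $P=a_2r^2-2ck$ and $Q=a_2^2-r^4/4+c^2-k^2$ in the final formula is on target. But there is a genuine gap at the heart of your argument: the claim that ``the radial derivative terms involving $a_2'$ cancel out, so that \eqref{equ: general dhym equation} is genuinely an algebraic (not differential) constraint on $a_2$ at each $r$'' is false. The endomorphism $\om_1^{-1}F_A$ has four eigenvalues, and the one in the $dr\wedge\theta_2$ direction is $a_2'/(f_0f_2)$; it survives into every term of \eqref{equ: dhym n=4}. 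Indeed the paper's computation gives, e.g., $F_A\w\om_1^3=-\tfrac{r(r^4-4c^2)a_2'+2(3r^4-4c^2)a_2-8ckr^2}{4r^2(r^4-4c^2)}\,\om_1^4$, and the full dHYM condition is the genuine first-order nonlinear ODE \eqref{equ: dhym om1 - ODE}. The same is true in the $T^*\C\mathbb{P}^1$ model you cite: \eqref{equ: solution1 dhym eguchihanson} is not the pointwise equation but the first integral of an exact ODE. The correct mechanism here is identical: one observes that \eqref{equ: dhym om1 - ODE} is exact, so its general solution is the quartic relation \eqref{equ: dhym om1} containing a free constant of integration $C$; only after imposing the boundary condition $a_2(\sqrt{2c})=k$, which fixes $C=-2\tan(\theta)((c^2-k^2)^2-4c^2k^2)-8ck(c^2-k^2)$, does the relation collapse to the closed algebraic form \eqref{equ: dhym om1 neat}. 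Your route would either produce the wrong equation (missing the $a_2'$ contribution) or, done honestly, contradict your own expectation that no ODE appears.

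This gap propagates into the solution count. Because the local moduli are a one-parameter family indexed by $C$, the statement ``generically $2$ solutions'' is not a pure discriminant statement about a fixed polynomial: one must first show $a_2(\sqrt{2c})=k$ is generically a \emph{double} root of the quartic at $r=\sqrt{2c}$ (it becomes a triple root exactly when $\tan(\theta)=\tfrac{2ck}{k^2-c^2}$), and then separately verify that both branches remain real and well-defined for all $r\in[\sqrt{2c},\infty)$, which the paper does by checking the sign of the discriminant of \eqref{equ: dhym om1 neat} viewed as a quartic in $a_2$. Your appeal to ``continuity in $r$'' does not substitute for this positivity check, since a priori two real branches emanating from $r=\sqrt{2c}$ could collide and become complex at some finite radius.
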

\begin{proof}
	First from Lemma \ref{lemma: holomorphic} we already know that we need $a_3=a_4=0$. A long but straightforward calculation using Propositions \ref{prop: HK structure of TCP2} and \ref{prop: curvature FA} then shows that
	\begin{gather*}
		F_A \w \om_1^3 =-\frac{r(r^4-4c^2)a_2'+2(3r^4-4c^2)a_2-8ckr^2}{4r^2(r^4-4c^2)} \om_1^4,\\
		F_A^4 = \frac{8a_2(a_2^2-k^2)a_2'}{r^3(r^4-4c^2)}\om_1^4,\\
		F_A^2 \w \om_1^2 = \frac{((3r^4-4c^2)a_2-4ckr^2)a_2'-2k^2r^3+6r^3a_2^2-8ckra_2}{r^3(3r^4-4c^2)}\om_1^4.
	\end{gather*}
	Using the above one finds that (\ref{equ: dhym n=4})  is given by 
	\begin{align}
		&\Big(8\tan(\theta)a_2^3-12a_2^2r^2-\big((6r^4-8c^2+8k^2)\tan(\theta)-16ck\big)a_2\label{equ: dhym om1 - ODE}\\ 
		&+r^2(r^4-4c^2+4k^2+8ck\tan(\theta))\Big)a_2' 
		-8r a_2^3-12 r^3 a_2^2 \tan(\theta)\nonumber\\
		&+(6r^5-8c^2r+8k^2r+16ckr \tan(\theta))a_2+(\tan(\theta)(r^4-4c^2+4k^2)-8ck)r^3 =0.\nonumber
	\end{align}
	It turns out that the latter is an exact ODE: indeed, it is not hard to verify that differentiating the expression
		\begin{align}
		&{2\tan(\theta)}a_2^4-4r^2a_2^3-{\big((3r^4-4c^2+4k^2)\tan(\theta)-8ck\big)}a_2^2\ \label{equ: dhym om1}+\\
		&r^2(r^4-4c^2+4k^2+8ck\tan(\theta))a_2\ \nonumber+\\
		&\tan(\theta)(\frac{r^4}{8}-c^2+k^2)r^4-2ckr^4-C=0,\nonumber
	\end{align}
	with respect to $r$ yields (\ref{equ: dhym om1 - ODE}). Thus, this shows that (\ref{equ: dhym om1}) is the general solution to (\ref{equ: dhym om1 - ODE}) and hence we have constructed all $\SU(3)$-invariant $\om_1$-dHYM connections on the open set $\R_{r>\sqrt{2 c}}\times N^{1,1}$. Observe that $a_2(r)$ is implicitly given as the solution to a quartic polynomial whose coefficients are functions of $r$.
	
	Since $C$ is an arbitrary constant of integration we have a $1$-parameter family of solutions on the open set $\R_{r>\sqrt{2 c}}\times N^{1,1}$. However, we still need to impose that $a_2(\sqrt{2 c})=k$ in order to get a smooth extension of the connection $A$ at the zero section $\C\mathbb{P}^2$. Substituting $r=\sqrt{2c}$ and $a_2(\sqrt{2 c})=k$ in (\ref{equ: dhym om1}) shows that we need $$C=-2\tan(\theta)((c^2-k^2)^2-4c^2 k^2)-8ck(c^2-k^2).$$
	With $C$ as above, one can rewrite (\ref{equ: dhym om1}) very neatly as (\ref{equ: dhym om1 neat}).
	Observe that if $\tan(\theta)=0$ then (\ref{equ: dhym om1 neat}) is a cubic polynomial in $a_2(r)$, otherwise (\ref{equ: dhym om1 neat}) is a quartic polynomial and hence generically there will be $4$ distinct solutions $a_4(r)$. Among these solutions only those satisfying $a_2(\sqrt{2c})=k$ will extend to global solutions on $T^*\C\mathbb{P}^2$. We shall determine those next.
	
	Setting $r=\sqrt{2c}$ in (\ref{equ: dhym om1 neat}) and solving for $a_2(\sqrt{2c})$ shows that $a_2(\sqrt{2c})=k$ occurs as a double root while the other two roots satisfy
	\[(a_2(\sqrt{2c})+k)^2\tan(\theta)-4c(a_2(\sqrt{2c})+k)-4c^2\tan(\theta)=0.\]
	From the latter expression we deduce easily that $a_2(\sqrt{2c})=k$ occurs as a triple root in the special case that $\tan(\theta)= \frac{2ck}{k^2-c^2}$ (this situation is illustrated in Figure \ref{figure3}); the fourth root in this case is $a_2(\sqrt{2c})=-(2c^2+k^2)/k$. 
	Hence for generic values of $k,\theta$ we get two distinct solutions satisfying $a_2(\sqrt{2c})=k$ (this is illustrated in Fig. \ref{figure1} and \ref{figure2}).
	
	To conclude the proof we still need to show that $a_2(r)$ is well-defined for all values of $r\in[\sqrt{2c},+\infty)$ since this is not a priori obvious from expression (\ref{equ: dhym om1 neat}).
	
	Let us first consider the situation when $\tan(\theta)=0$. In this case from (\ref{equ: dhym om1 neat}) we have that 
	\[(a_2 r^2 -2ck)(4a_2^2-r^4+4c^2-4k^2)=0\]
	i.e.
	\begin{equation}
		a_2 = \frac{2ck}{r^2},\ a_2 = \pm \frac{1}{2}\sqrt{r^4-4c^2+4k^2}.
	\end{equation}
	Hence, clearly there are $2$ distinct solutions with $a_2(\sqrt{2c})=k$ and they are well-defined for $r\in [\sqrt{2c},+\infty)$. 
	On the other hand, when $\cot(\theta)=0$, from (\ref{equ: dhym om1 neat}) we have that 
	\begin{align*}
	a_2=\frac{r^2}{2} &\pm \frac{1}{2}\sqrt{2r^4-4c^2-8ck+4k^2},\\ a_2=-\frac{r^2}{2} &\pm \frac{1}{2}\sqrt{2r^4-4c^2+8ck+4k^2}.
	\end{align*}
	Again it is not hard to see that we get two distinct solutions satisfying $a_2(\sqrt{2c})=k$ which are well-defined for $r\in[\sqrt{2c},\infty)$, except in case $c=-3k$ when $a_2(\sqrt{2c})=k$ occurs as a triple root as we already noted above. 
	
	For the more general case when $\cot(\theta),\tan(\theta)\neq 0$, we can view (\ref{equ: dhym om1 neat}) as a quartic polynomial in $a_2$. Setting $c=1$ for convenience, we find that the discriminant of this polynomial (up to a positive constant) is given by
	\[\Big((k^4-6k^2+1)\tan^2(\theta)-4k(k^2-1)\tan(\theta)+\frac{r^4/4+k^2-1}{\tan(\theta)}\Big)(r^4-4)^2(r^4+4k^2).\]
	When $r=\sqrt{2c}=\sqrt{2}$, we see that the above expression is zero which is consistent with the fact that there are two solutions $a_2(r)$ emanating from $(\sqrt{2c},k)$. Since the discriminant is positive for $r> 2$, we deduce that indeed $a_2(r)$ is well-defined for all $r\in [\sqrt{2c},\infty)$ and this concludes the proof.
\end{proof}

\begin{Rem}\label{rem: dhym om1} 
		There are $3$ distinct $\om_1$-dHYM connections corresponding to the case when $\theta=0$. One is the hyper-holomorphic connection of Lemma \ref{lemma: holomorphic} (as expected from Proposition \ref{prop: hyperholomorphic implies deformed}). If furthermore, we set $k=\pm c$ then the other two solutions correspond to $F_A= \mp \om_1$ (these are the elementary examples described in Section \ref{section: preliminaries}).
\end{Rem}

\begin{figure}
	\centering
	\includegraphics[height=7cm]{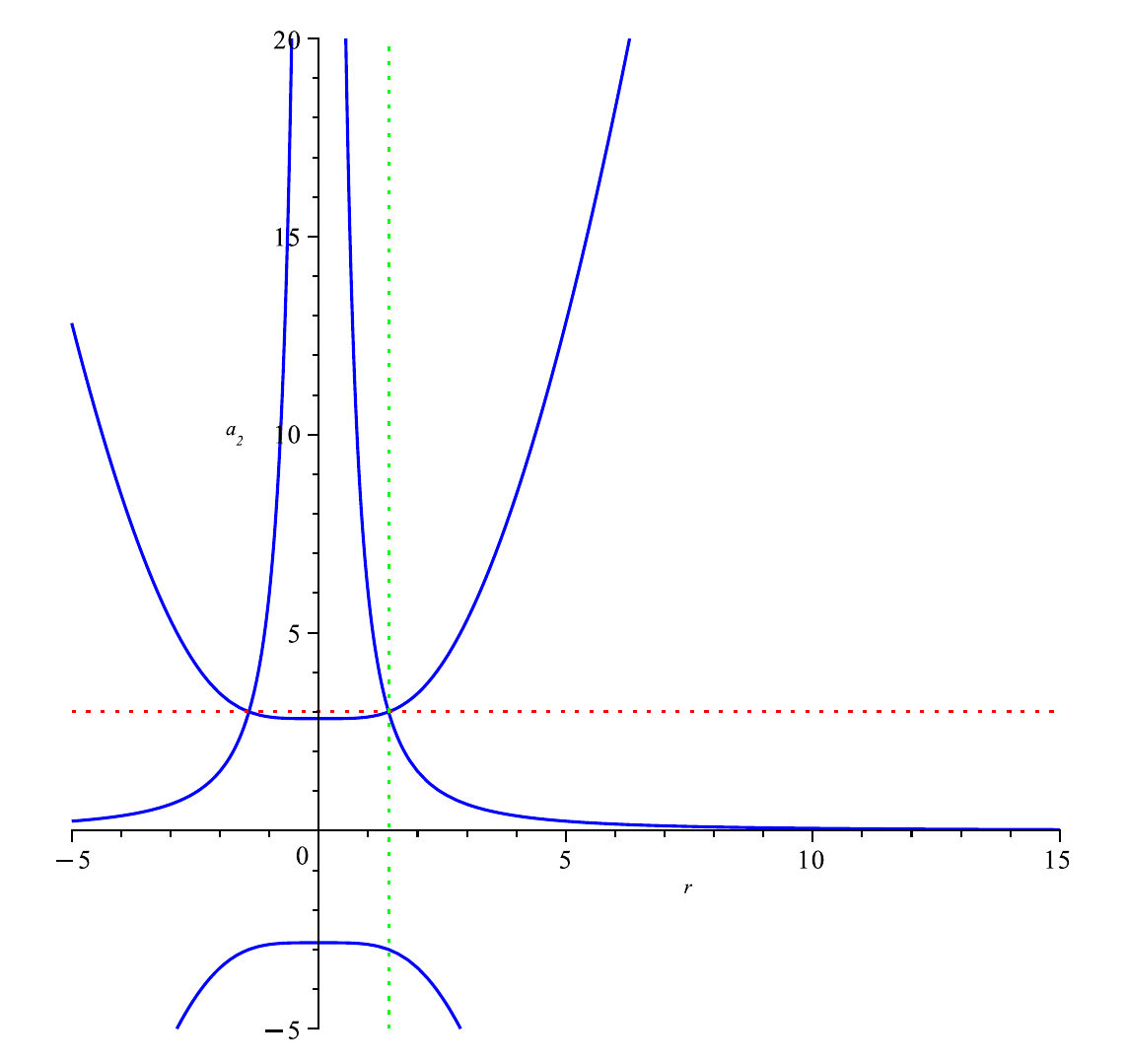}
	\caption{{Graphical solutions to (\ref{equ: dhym om1}) with $c=1$, $k=3$ and $\theta=0$ illustrating a generic case of Theorem \ref{theorem: dhym for om1}. The asymptotically zero solution corresponds to the hyper-holomorphic connection of Lemma \ref{lemma: holomorphic}}, the vertical dotted line to $r=c$ and the horizontal dotted line to $a_2=k$. There are 2 distinct solutions emanating from $(c,k)$ which are well-defined for $r\in [1,+\infty).$
	}\label{figure1}
\end{figure}
\begin{figure}
	\centering
	\includegraphics[height=7cm]{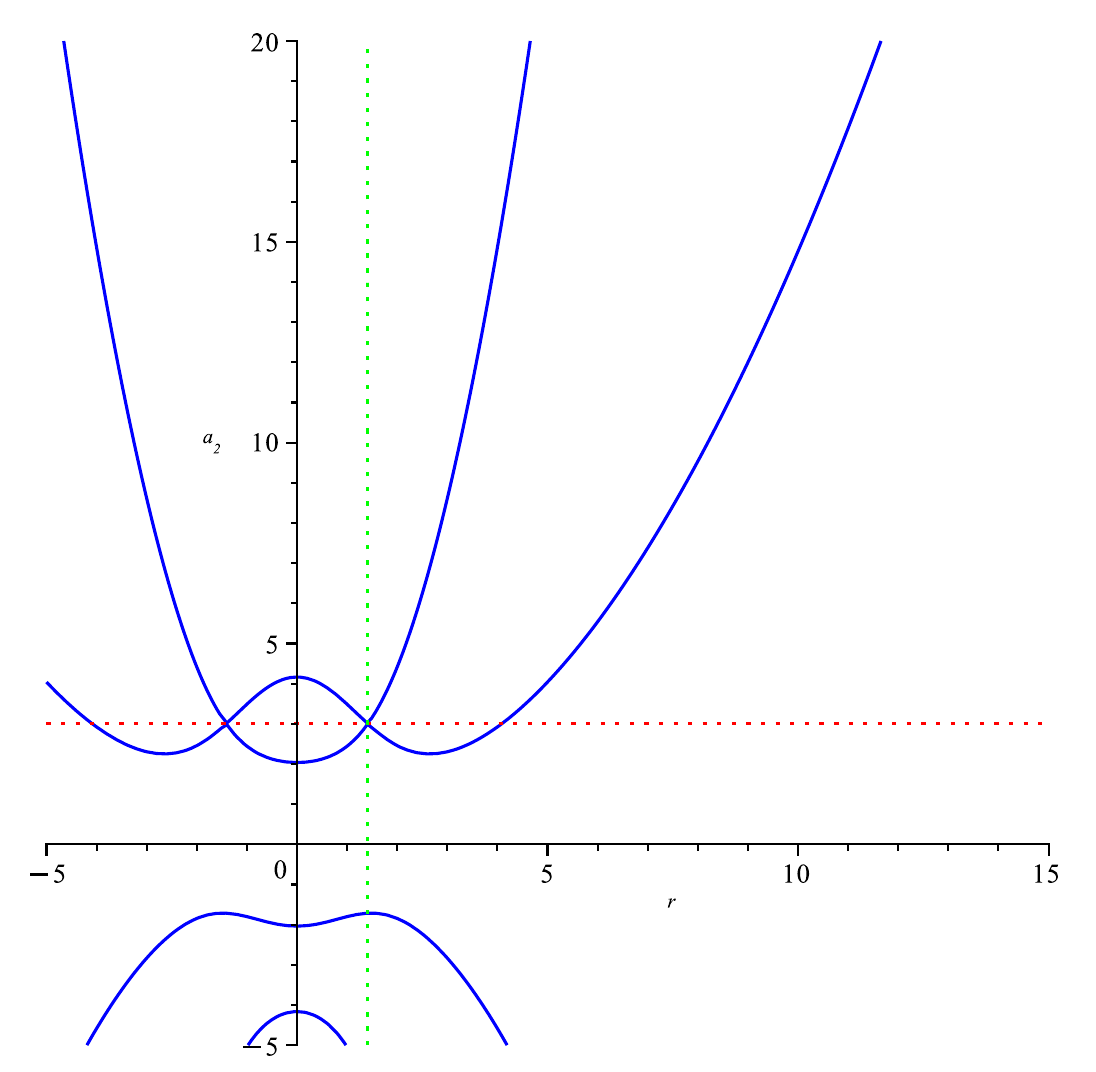}
	\caption{{Graphical solutions to (\ref{equ: dhym om1}) with  $c=1$, $k=3$ and $\theta=2$
			illustrating a generic case of Theorem \ref{theorem: dhym for om1} with non-zero phase angle $\theta$.}}\label{figure2}
\end{figure}
\begin{figure}
	\centering
	\includegraphics[height=7cm]{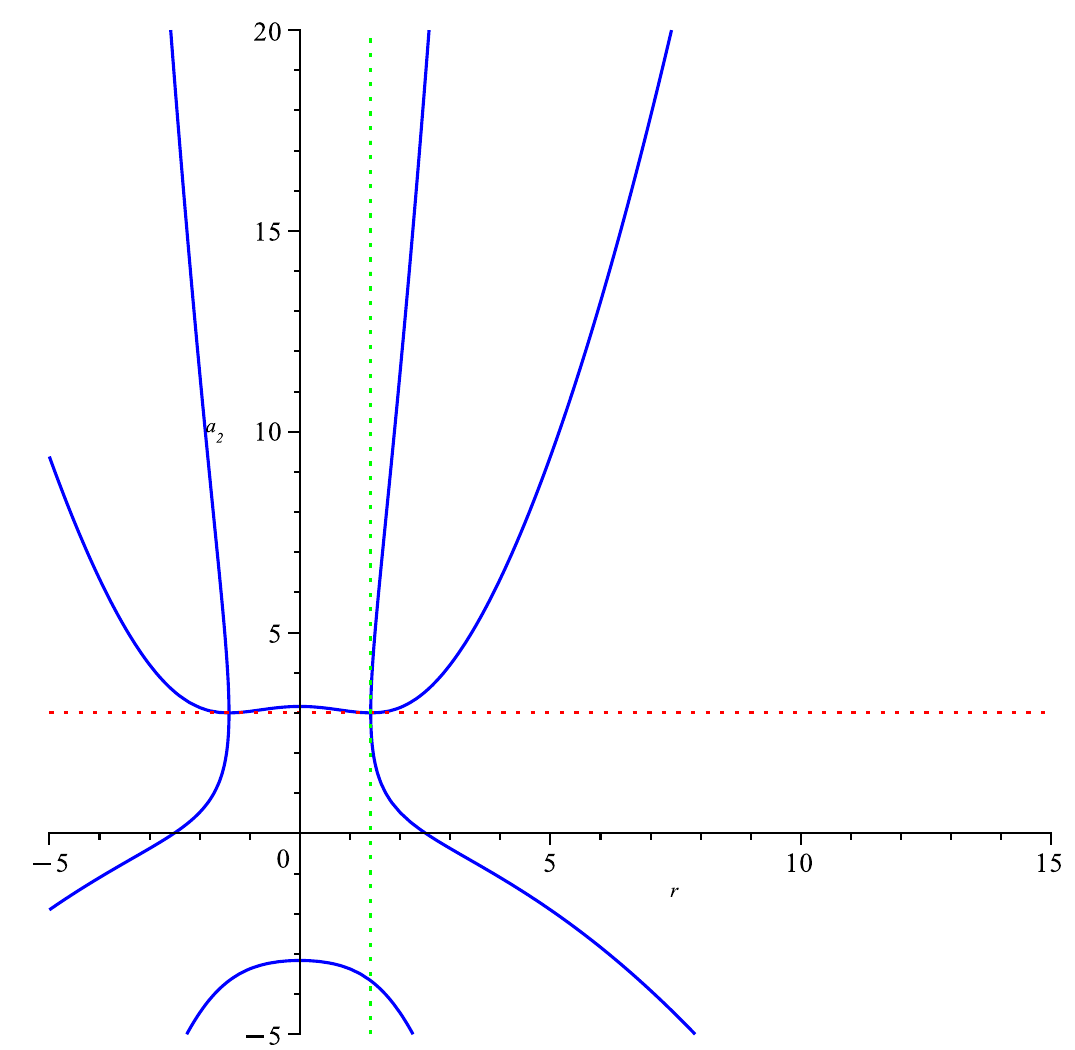}
	\caption{{Graphical solutions to (\ref{equ: dhym om1}) with $c=1$, $k=3$ and $\theta=\arctan(3/4)$ illustrating a non-generic case of Theorem \ref{theorem: dhym for om1} whereby there are 3 distinct solutions.}}\label{figure3}
\end{figure}

\begin{figure}
	\centering
	\includegraphics[height=7cm]{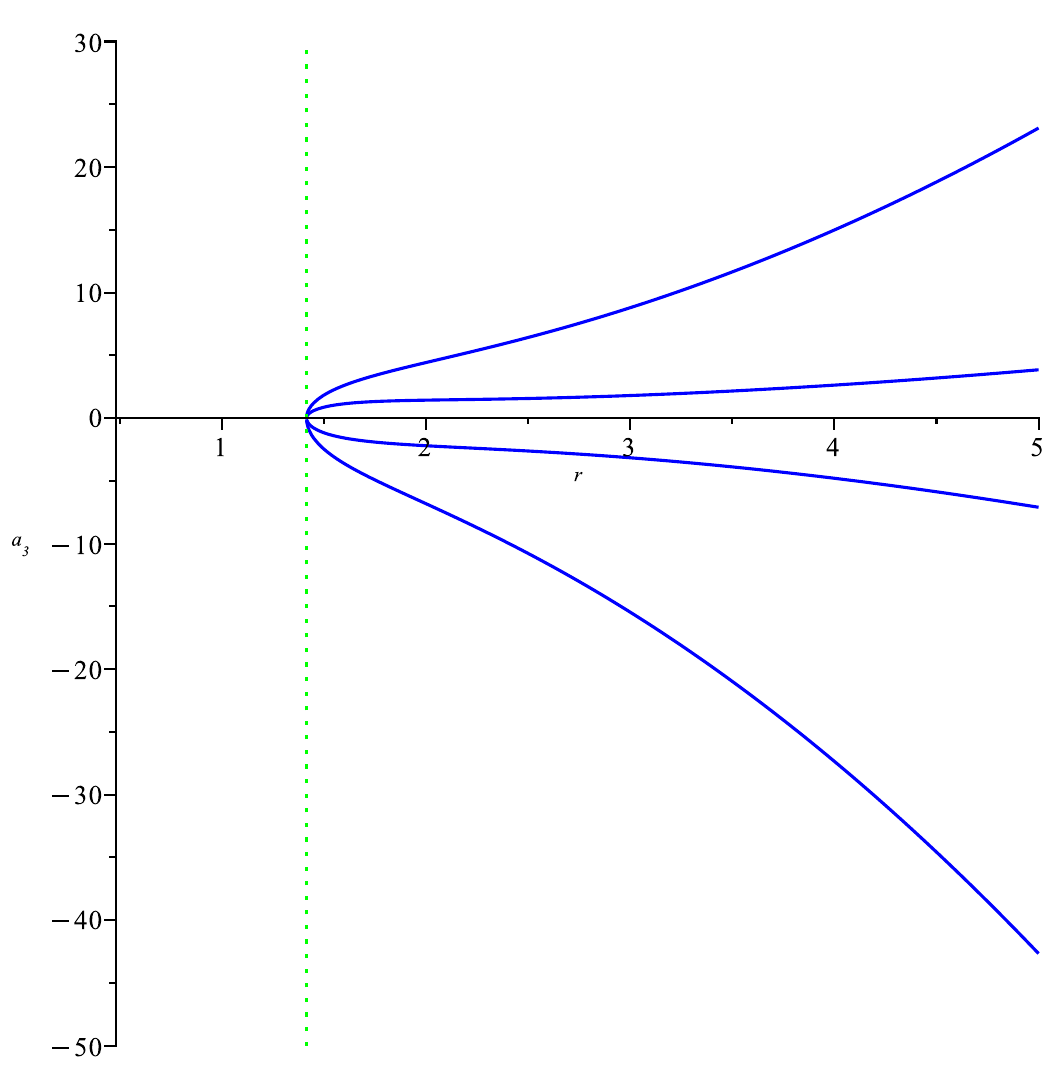}
	\caption{{Graphical solutions to (\ref{dhym om2 tcp2 solution}) with $c=k=1$ and $\theta=\arctan(2)$ illustrating a generic case of Theorem \ref{theorem: dhym for om2}.}}\label{figure4}
\end{figure}

\begin{Th}\label{theorem: dhym for om2}
The connection $A(r)=k \theta_1+a_2\theta_2+a_3\theta_3+a_4\theta_4$ is a dHYM connection with phase $e^{i\theta}$ with respect to $\om_2$ on $T^*\C\mathbb{P}^2$ if and only if $a_2= 2ck/r^2$, $a_4=0$ and $a_3(r)$ is implicitly defined by 
\begin{equation}
	\tan(\theta) =\frac{2(4a_3r^4\sqrt{r^4-4c^2})(r^8-4 r^4(a_3^2+c^2-k^2)-16c^2k^2)}{(4a_3r^4\sqrt{r^4-4c^2})^2-(r^8-4 r^4(a_3^2+c^2-k^2)-16c^2k^2)^2},\label{dhym om2 tcp2 solution}
\end{equation}
where $k \in \mathbb{Z}$. 
For generic values of $k,\theta$, there are $4$ distinct $\om_2$-dHYM connections on $T^*\C\mathbb{P}^2$ i.e. these are the solutions $a_3(r)$ satisfying $a_3(\sqrt{2c})=0$. 
\end{Th}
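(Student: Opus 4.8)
The plan is to mirror the proof of Theorem~\ref{theorem: dhym for om1}. First, by Lemma~\ref{lemma: holomorphic}(2) the vanishing of the $(0,2)$-part of $F_A$ with respect to $J_2$ forces $a_2=2ck/r^2$ and $a_4=0$, so only the component $a_3(r)$ remains free and it suffices to analyse the scalar equation (\ref{equ: dhym n=4}) for $\om=\om_2$. Using Propositions~\ref{prop: HK structure of TCP2} and~\ref{prop: curvature FA} I would compute the four $8$-forms $F_A\w\om_2^3$, $\om_2^4$, $F_A^2\w\om_2^2$ and $F_A^4$, each as an explicit multiple of $\om_2^4$ whose coefficient is a rational function of $r$, $a_3$ and $a_3'$; substituting into (\ref{equ: dhym n=4}) produces a first-order ODE of the form $P(r,a_3)\,a_3'+Q(r,a_3)=0$ with $P,Q$ polynomial in $a_3$.

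As in the $\om_1$ case I expect this ODE to be exact: there should be a quartic polynomial $G(r,a_3)$, with $r$-dependent coefficients and an integration constant $C$, whose total derivative in $r$ reproduces $P\,a_3'+Q$ up to sign, so that the general $\SU(3)$-invariant $\om_2$-dHYM connection on $\R_{r>\sqrt{2c}}\times N^{1,1}$ is given implicitly by $G(r,a_3)=0$. Then I would impose the boundary condition $a_3(\sqrt{2c})=0$ from the extension discussion preceding Lemma~\ref{lemma: holomorphic} (the condition $a_2(\sqrt{2c})=k$ being automatic since $a_2=2ck/r^2$), which fixes $C$; simplifying $G=0$ with this value of $C$ should yield the asserted compact form (\ref{dhym om2 tcp2 solution}).

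The solution count is where $\om_2$ departs from $\om_1$. Evaluating (\ref{dhym om2 tcp2 solution}) at $r=\sqrt{2c}$, the factor $\sqrt{r^4-4c^2}$ vanishes and $r^8-4r^4(a_3^2+c^2-k^2)-16c^2k^2$ collapses to $-16c^2a_3^2$, so the defining relation degenerates to $\tan(\theta)\,a_3^4=0$; hence $a_3=0$ is a root of multiplicity four, not the double root that occurred for $\om_1$, and all four branches of the quartic emanate from $(\sqrt{2c},0)$. To turn this into ``$4$ distinct global solutions'' one must check that the four branches genuinely separate for $r>\sqrt{2c}$: solving (\ref{dhym om2 tcp2 solution}) for the combination $4a_3r^4\sqrt{r^4-4c^2}$ reduces it to two quadratics in $a_3$ (indexed by the two roots $(1\pm\sec\theta)/\tan\theta$), and a short Puiseux analysis near $r=\sqrt{2c}$ shows each quadratic contributes two branches with leading behaviour $a_3\sim\pm\,\mathrm{const}\cdot\sqrt{r-\sqrt{2c}}$, the four being distinct for generic $k,\theta$. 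Finally one verifies each branch is well-defined on all of $[\sqrt{2c},+\infty)$ by a discriminant computation, treating the degenerate cases $\tan(\theta)=0$ and $\cot(\theta)=0$ (where the quartic drops to a cubic or splits off square-root factors) separately as in Theorem~\ref{theorem: dhym for om1}. The main obstacle is this endpoint analysis at the bolt together with extracting the neat form (\ref{dhym om2 tcp2 solution}) from the raw quartic; the exactness verification, while lengthy, is routine.
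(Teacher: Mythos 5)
Your proposal follows essentially the same route as the paper's proof: reduce to $a_2=2ck/r^2$, $a_4=0$ via Lemma \ref{lemma: holomorphic}, compute $F_A\w\om_2^3$, $F_A^2\w\om_2^2$ and $F_A^4$ as multiples of $\om_2^4$, observe that the resulting first-order ODE is exact with a quartic first integral in $a_3$, fix the integration constant by $a_3(\sqrt{2c})=0$, and rewrite to obtain (\ref{dhym om2 tcp2 solution}). Your endpoint analysis at the bolt (the quadruple root $a_3=0$ and the splitting into two quadratics indexed by $(1\pm\sec\theta)/\tan\theta$) is correct and in fact spells out what the paper only sketches by deferring to the argument of Theorem \ref{theorem: dhym for om1} and to Remark \ref{rem: dspin7 phi2}(3).
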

\begin{proof}
Since the proof is similar as in the previous theorem, we shall only highlight the main computations. From Lemma \ref{lemma: holomorphic} we already know that we need $a_2=2ck/r^2$ and $a_4=0$. A straightforward calculation then shows that 
\begin{gather*}
	F_A \w \om_2^3 =-\frac{a_3'(r^4-4c^2)+6r^3 a_3}{4r^3(r^4-4c^2)^{1/2}} \om_2^4,\\
	F_A^4 = \frac{8(r^4a_3^2-k^2r^4+4c^2k^2)(r^5a_3a_3'-8c^2k^2)}{r^{12}(r^4-4c^2)}\om_2^4,\\
	F_A^2 \w \om_2^2 = \frac{3r^5(r^4-4c^2)a_3 a_3'+6r^8 a_3^2-2k^2(r^8-16c^4)}{3r^8(r^4-4c^2)}\om_2^4.
\end{gather*}
As before computing (\ref{equ: dhym n=4}) we find that the resulting ODE is exact and that the general solution on the open set $\R_{r>\sqrt{2 c}}\times N^{1,1}$ is given by 
\begin{align}
&16\tan(\theta)a_3^4-32(r^4-4c^2)^{1/2} a_3^3-24\tan(\theta)(r^4-4c^2)(1+\frac{4k^2}{3r^4}) a_3^2+
\Big(\frac{32k^2}{r^4}+8\Big)(r^4-4c^2)^{3/2}a_3\nonumber\\
&+\tan(\theta)((r^4-4c^2)^2+\frac{8k^2}{r^8}(r^4(r^8+16c^4)-16c^2k^2(r^4-2c^2)))-C=0.\nonumber
\end{align}
Imposing that $a_3(\sqrt{2c})=0$ we find that we need 
$$C=16k^2\tan(\theta)(4c^2-k^2).$$ 
Rewriting the resulting expression with $C$ as above gives (\ref{dhym om2 tcp2 solution}). A similar argument as in the previous theorem shows that the solutions $a_3(r)$ are well-defined for all $r\in[\sqrt{2c},\infty)$ (see also Remark \ref{rem: dspin7 phi2} (3) below).
\end{proof}

\begin{Rem}\label{rem: dhym om2}
	There are $3$ distinct $\om_2$-dHYM connections corresponding to the case when $\theta=0$. One is the hyper-holomorphic connection of Lemma \ref{lemma: holomorphic} (as expected from Proposition \ref{prop: hyperholomorphic implies deformed}). If furthermore, we set $k=0$ then the other two solutions correspond to $F_A= \pm \om_2$ (these are again the elementary examples described in Section \ref{section: preliminaries}).
\end{Rem}

Note that for dHYM connections with respect to $\om_3$ it suffices to swap $a_3$ and $a_4$ in  Theorem \ref{theorem: dhym for om2}. Next we investigate the deformed $\spin(7)$ instanton condition. 

\begin{Th}\label{theorem: dspin7 for phi1}
	The connection form $A(r)=k \theta_1+a_2\theta_2+a_3\theta_3+a_4\theta_4$ is a deformed $\spin(7)$-instanton on $T^*\C\mathbb{P}^2$ with respect to $\Phi_1$ if and only if either of the following holds:
	\begin{enumerate}
		\item $a_3(r)=a_4(r)=0$ and 
			\begin{equation}
			a_2(r)=\frac{2ck}{r^2}\theta_2,\label{equ: dspin7 as hyperholo}
		\end{equation}
		\item $a_2(r)=2ck/r^2$, $a_3(r)=C_3 p(r)$ and $a_4(r)=C_4 p(r)$, where
			\begin{equation}
			p(r)=\frac{\sqrt{(r^4+4k^2)(r^4-4c^2)}}{2r^2\sqrt{C_3^2+C_4^2 }}, \label{equ: strictly dspin7}
		\end{equation}
		\item $a_3(r)=a_4(r)=0$ and 
			\begin{equation}
			a_2(r)= \Big(\frac{1}{2}C r^2 \pm \frac{1}{2}\sqrt{C^2 r^4-8 C c k+{r^4}-4c^2+4k^2}\Big),\label{equ: dspin in fact dhym}
		\end{equation}
		where if $cC\geq (\leq) k$ we take the $-$ ($+$) sign so that $a_2(\sqrt{2c})=k$.
	\end{enumerate}
\end{Th}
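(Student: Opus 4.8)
The plan is to impose the two defining equations \eqref{equ: dspin7 I} and \eqref{equ: dspin7 II} in turn on the open set $\R_{r>\sqrt{2c}}\times N^{1,1}$ and then keep only those solutions that extend across the bolt $\C\mathbb{P}^2$, i.e. that satisfy $a_2(\sqrt{2c})=k$ and $a_3(\sqrt{2c})=a_4(\sqrt{2c})=0$. First I would compute $F_A\w F_A$ from Proposition \ref{prop: curvature FA}. By Proposition \ref{prop: spin7modulesasSp2}, cyclically permuting the indices so that the $\spin(7)$ $4$-form is $\Phi_1$, together with Proposition \ref{prop: spaces Ei}, the $\SU(3)$-invariant part of $\Lm^4_7$ is three-dimensional, spanned by $\om_1\w\om_2$, $\om_1\w\om_3$ and $\al_2\w\om_3$. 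Since $F_A\w F_A$ is itself $\SU(3)$-invariant, equation \eqref{equ: dspin7 I} is then equivalent to $F_A\w F_A$ being $g_{Ca}$-orthogonal to these three $4$-forms.

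Pairing $F_A\w F_A$ against each of them using Proposition \ref{prop: HK structure of TCP2} and simplifying, I expect \eqref{equ: dspin7 I} to reduce to
\begin{gather*}
a_3'a_4=a_4'a_3,\\
\frac{d}{dr}\big((r^2a_2-2ck)\sqrt{r^4-4c^2}\,a_3\big)=0,\\
\frac{d}{dr}\big((r^2a_2-2ck)\sqrt{r^4-4c^2}\,a_4\big)=0.
\end{gather*}
From this, either $a_3\equiv a_4\equiv 0$, with $a_2$ then unconstrained by \eqref{equ: dspin7 I}; or $a_3=C_3q$, $a_4=C_4q$ with $C_3,C_4$ constant (not both zero) and $q$ a function, in which case $(r^2a_2-2ck)\sqrt{r^4-4c^2}\,q$ is constant. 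In the second alternative, smoothness at the bolt forces $q(\sqrt{2c})=0$ while $a_2$ stays bounded there, so that constant vanishes and hence $a_2\equiv 2ck/r^2$, with $q$ still free.

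It then remains to impose \eqref{equ: dspin7 II}, using \eqref{pi27} for $\pi^2_7$ and computing $*F_A^3$. In the branch $a_3=a_4=0$ this should reduce, away from the locus $r^2a_2=2ck$, to the statement that $\frac{a_2^2-r^4/4+c^2-k^2}{r^2a_2-2ck}$ is constant; solving the resulting quadratic in $a_2$ (with coefficients polynomial in $r$) and fixing the sign of the square root by $a_2(\sqrt{2c})=k$ yields \eqref{equ: dspin in fact dhym}. The excluded locus $r^2a_2\equiv 2ck$ is exactly the hyper-holomorphic connection of Lemma \ref{lemma: holomorphic}, which is a deformed $\spin(7)$-instanton by Proposition \ref{prop: hyperholomorphic implies deformed} and the remark following it; this gives \eqref{equ: dspin7 as hyperholo}. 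In the branch $a_2=2ck/r^2$, $a_3=C_3q$, $a_4=C_4q$, the analogous computation should reduce \eqref{equ: dspin7 II} to the vanishing of the $r$-derivative of $(r^4-4c^2)^{1/2}(C_3^2+C_4^2)q^3-\frac{(r^4+4k^2)(r^4-4c^2)^{3/2}}{4r^4}q$; the boundary condition $q(\sqrt{2c})=0$ kills the integration constant, and solving for $q$ gives $p(r)$ as in \eqref{equ: strictly dspin7}. Finally I would verify that in each of the three cases the coefficient functions are smooth on all of $[\sqrt{2c},\infty)$ and satisfy the bolt conditions, so that they genuinely define connections on $T^*\C\mathbb{P}^2$.

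The hard part will be the second equation: computing $*F_A^3$ and applying $\pi^2_7$ in the two-function branch, and recognizing that the first-order ODEs that come out are exact, so that they can be integrated in closed form. The rest — identifying the invariant parts of $\Lm^4_7$ and $\Lm^2_7$ via Schur's lemma and the module decompositions of Section \ref{section: preliminaries}, and the elementary analysis near $r=\sqrt{2c}$ — is routine.
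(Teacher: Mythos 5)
Your plan reproduces the paper's proof essentially step for step: the same identification of the invariant part of $\Lm^4_7$ as $\langle \om_1\w\om_2,\ \om_1\w\om_3,\ \al_2\w\om_3\rangle$, the same reduction of (\ref{equ: dspin7 I}) to the three first-order ODEs, the same two branches, and the same exact ODEs coming from (\ref{equ: dspin7 II}) whose conserved quantities yield (\ref{equ: dspin7 as hyperholo}), (\ref{equ: strictly dspin7}) and (\ref{equ: dspin in fact dhym}). The only (harmless) difference is that you impose the bolt conditions $a_2(\sqrt{2c})=k$, $a_3(\sqrt{2c})=a_4(\sqrt{2c})=0$ already at the stage of equation (\ref{equ: dspin7 I}), whereas the paper first records the general local solutions on $\R_{r>\sqrt{2c}}\times N_{1,1}$ and applies the boundary conditions at the end.
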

\begin{proof}
		First, from Proposition \ref{prop: curvature FA} we have that  
		\begin{align*}
			F_A \w F_A = & -dr\w \big(4(a_2' a_2+a_3' a_3+a_4' a_4)\theta_{234}+2a_2'(a_2+k)\theta_{256}+2a_2'a_3\theta_{2}\w\sigma_{2}
			+2a_2' a_4 \theta_{2}\w \sigma_3\\
			&+2a_2'(a_2-k)\theta_{278}+2a_3'(a_2+k)\theta_{356}+2a_3'a_3\theta_3\w \sigma_{2}+2 a_3'a_4\theta_3\w\sigma_3+2a_3'(a_2-k)\theta_{378}\\
			&+2a_4'(a_2+k)\theta_{456}+2a_4'a_3\theta_{4}\w \sigma_{2}+2a_4'a_4 \theta_4 \w \sigma_{3}+2a_4'(a_2-k)\theta_{478}\big)\\
			&+4a_4(a_2+k)\theta_{2356}+4a_3 a_4\theta_{23}\w \sigma_2+4a_4^2\theta_{23}\w \sigma_3+4a_4(a_2-k)\theta_{2378}-4a_3(a_2+k)\theta_{2456}\\
			&-4a_3^2\theta_{2457}-4a_3 a_4 \theta_{24}\w \sigma_{3}+4a_3^2\theta_{2468}-4a_3(a_2-k)\theta_{2478}+4a_2(a_2+k)\theta_{3456}\\
			&+4a_2a_3\theta_{34}\w \sigma_2+4a_2a_4\theta_{34}\w \sigma_3+4a_2(a_2-k)\theta_{3478}+2(a_2^2+a_3^2+a_4^2-k^2)\theta_{5678}.
		\end{align*}
		On the other hand from Propositions \ref{prop: spin7modulesasSp2} and \ref{prop: spaces Ei}  we know that the $\SU(3)$-invariant forms in $\Lm^4_{7}$ (defined with respect to $\Phi_1$) are spanned by $\al_2\w \om_3= - \al_3\w \om_2\in F_1^+$, $\om_1\w \om_2$ and $\om_1\w \om_3$. Hence $\pi^4_7(F_A\w F_A)=0$ is equivalent to the condition that  $F_A \w F_A$ is orthogonal to the latter triple. Using  the above expression for $F_A \w F_A$ and using Proposition \ref{prop: HK structure of TCP2} we 
		have that $g_{Ca}(F_A\w F_A,\om_1\w \om_2)=0$ is given by
		\[
		4f_0f_2f_5f_7 a_2 a_3+ 2f_0f_3f_5^2 a_3(a_2-k)+2f_0f_3f_7^2 a_3(a_2+k)-f_2f_3f_5^2a_3'(a_2-k)-f_2f_3f_7^2a_3'(a_2+k)-2f_3^2f_5f_7 a_2'a_3=0,
		\]
		where $f_i$ are as defined in Proposition \ref{prop: HK structure of TCP2}. Simplifying, it turns out that the latter is equivalent to  
		\begin{equation}
		\frac{d}{dr}((r^2a_2-2ck)(r^4-4c^2)^{1/2}a_3)=0.
		\end{equation}
		Similarly, we compute $g_{Ca}(F_A\w F_A,\al_2\w \om_3)=0$, $g_{Ca}(F_A\w F_A,\om_1\w \om_3)=0$ and find that these are given by
		\begin{gather}
			a_3'a_4=a_4' a_3,\\
			\frac{d}{dr}((r^2a_2-2ck)(r^4-4c^2)^{1/2}a_4 )=0,
		\end{gather}
		respectively. Considering all the possible distinct solutions to the above system of ODEs, we deduce that $A(r)$ is either of the form 
		\begin{equation}
			A=k\theta_1 +\frac{2ck}{r^2} \theta_2 + p(r)(C_3\theta_3+C_4\theta_4),\label{equ: tbc 2}
		\end{equation}		
		where $p(r)$ is an arbitrary function and $C_3,C_4$ are constants, or 
		\begin{equation}
			A=k\theta_1 +a_2 \theta_2 +  \frac{1}{(r^2 a_2-2ck)\sqrt{r^4-4c^2}}(C_3\theta_3+C_4\theta_4).\label{equ: tbc}
		\end{equation}
		With $A$ as given by (\ref{equ: tbc 2}), a rather long but straightforward computation using Proposition \ref{prop: HK structure of TCP2},  (\ref{equ: curvature tcp2}) and (\ref{pi27}) shows that 
		\begin{align*}
			\pi^2_7(F_A-\frac{1}{6}* F_A^3) =\ &\frac{1}{4r^8(r^4-4c^2)^{3/2}}
			\Big(r(r^4-4c^2)(12r^4(C_3^2+C_4^2)p^2-(r^4-4k^2)(r^4-4c^2))p'(r)\\
			&+8r^8(C_3^2+C_4^2)p^3-6(r^4-4c^2)(r^8+4k^2r^4/3+32c^2k^2/3)p
			\Big)(C_3 f_0f_3dr \w \theta_3\\
			&+C_4 f_0f_3dr \w \theta_4+C_4 f_2f_3\theta_{23}-C_3 f_2f_3\theta_{24}+C_3 f_5f_7\sigma_2+C_4f_5f_7 \sigma_3).
		\end{align*}
		Simplifying the above, we find that (\ref{equ: dspin7 II}) is equivalent to	
		\begin{equation*}
			\frac{d}{dr}\Big((r^4-4c^2)^{1/2}(C_3^2+C_4^2)p(r)^3-\frac{(r^4+4k^2)(r^4-4c^2)^{3/2}}{4r^4}p(r)\Big)=0
		\end{equation*}
		and hence $p(r)$ is implicitly defined as the solution to a cubic polynomial whereby the coefficients are functions of $r$.
		
		On the other hand, when $A$ as given by (\ref{equ: tbc}), similarly as above we find that (\ref{equ: dspin7 II}) is equivalent to  
		\begin{equation*}
			\frac{d}{dr}\Big(\frac{a_2^2-r^4/4+c^2-k^2}{a_2r^2-2ck}+\frac{C_3^2+C_4^2}{(a_2r^2-2ck)^3(r^4-4c^2)}\Big)=0
		\end{equation*}
		and hence $a_2(r)$ is implicitly defined by 
		\begin{align}
			&\Big(\frac{4a_2}{r^2}+\frac{8ck}{r^4}-C\Big){(r^4-4c^2)(a_2r^2-2ck)^3}-\nonumber\\
			&\frac{(r^4-4c^2)^2(r^4+4k^2)(a_2r^2-2ck)^2}{r^4}
			+4(C_3^2+C_4^2)=0.\label{local dspin7 phi1-1}
		\end{align}		
		Thus, we have found all the $\SU(3)$-invariant deformed $\spin(7)$-instantons with respect to $\Phi_1$ on $\R_{r>\sqrt{2c}}\times N^{1,1}$. To conclude it remains to apply the initial conditions to the above local solutions.
		
		Imposing $a_2(\sqrt{2c})=k$ in  (\ref{local dspin7 phi1-1}) shows that we need $C_3=C_4=0$. Solving for $a_2(r)$ we get the deformed $\spin(7)$-instanton given by (\ref{equ: dspin in fact dhym}). Similarly, imposing $p(\sqrt{2c})=0$ 
		 we get the hyper-holomorphic connection (\ref{equ: dspin7 as hyperholo}) and the deformed $\spin(7)$-instanton given by (\ref{equ: strictly dspin7}).
\end{proof}

\begin{Rem}\label{rem: dspin7 phi1}\
	\begin{enumerate}
		\item  Observe that the solution given by (\ref{equ: dspin7 as hyperholo}) corresponds to the hyper-holomorphic connection of Lemma \ref{lemma: holomorphic}. This was indeed to be expected as we showed in Proposition \ref{prop: hyperholomorphic implies deformed}.
		\item Comparing with Theorem \ref{theorem: dhym for om2}, it is easy to see that the solutions given by (\ref{equ: strictly dspin7}) correspond to the $\om_2$-dHYM connections with phase $1$ when $C_4=0$ and to the $\om_3$-dHYM connections with phase $1$ when $C_3=0$. More generally, it is not hard to verify that these deformed $\spin(7)$-instantons are in fact also dHYM connections with phase $1$ with respect to the $1$-parameter family of K\"ahler forms given by $\frac{C_3}{\sqrt{C_3^2+C_4^2}}\om_2+\frac{C_4}{\sqrt{C_3^2+C_4^2}}\om_3$; this is again a consequence of Theorem \ref{thm: result of kawai-yamamoto} (1). Furthermore, these deformed $\spin(7)$-instantons all satisfy $*F_A^4=24$, cf. Remark \ref{remark: harveylawson}.
		\item Setting $\tan(\theta)=\frac{2 C}{C^2-1}$ in (\ref{equ: dhym om1 neat}) one can verify directly that the solutions given by (\ref{equ: dhym om1 neat}) coincide with those given by (\ref{equ: dspin in fact dhym}). Thus, the deformed $\spin(7)$-instantons given by (\ref{equ: dspin in fact dhym}) in fact correspond to the $\om_1$-dHYM connections for \textit{all} phase angle $\theta$. In particular, we see that asymptotically $a_2(r) \sim (C \pm \sqrt{C^2+1})r^2/2$ i.e. $a_2$ always has quadratic growth but the coefficient depends on the phase angle. Unlike the deformed $\spin(7)$-instantons given by (\ref{equ: strictly dspin7}), these examples, as well as the hyper-holomorphic connection, do \textit{not} satisfy $*F_A^4=24$ except when $C=0$.
	\end{enumerate}
\end{Rem}

\begin{Th}\label{theorem: dspin7 for phi2}
	The connection form $A(r)=k \theta_1+a_2\theta_2+a_3\theta_3+a_4\theta_4$ is a deformed $\spin(7)$-instanton on $T^*\C\mathbb{P}^2$ with respect to $\Phi_2$ if and only if either of the following holds:
	\begin{enumerate}
		\item 	 $a_3(r)=a_4(r)=0$ and 
		\begin{equation}
			a_2(r)=\frac{2ck}{r^2}\label{hyperholomorphic dspin7 phi2}
		\end{equation}
		\item 	$a_3(r)=a_4(r)=0$ and 
		\begin{equation}
			a_2(r)=\pm\frac{1}{2}\sqrt{r^4-4c^2+4k^2}\label{dhym om1 dspin7 phi2}
		\end{equation}
		\item  $a_3(r)=0$,
		\begin{equation}
			a_2(r)=\frac{2ck}{r^2}+Ca_4(r)\cdot\frac{(r^4-4c^2)^{1/2}}{r^2}\label{gttttttt}
		\end{equation}
		and 
		\begin{equation}
			a_4(r)=\frac{4 Cck\pm r^2\sqrt{C^2(r^4-4c^2+4k^2)+r^4+4k^2}}{-2C^2(r^4-4c^2)-2r^4}(r^4-4c^2)^{1/2},\label{a44444}
		\end{equation}
		\item $a_2(r)=2ck/r^2$, $a_4(r)=0$ and 
		\begin{equation}
			a_3(r)=\frac{C r^2 \pm \sqrt{(C^2+1)r^4+4k^2}}{2r^2}(r^4-4c^2)^{1/2}.\label{a33333}
		\end{equation}
	\end{enumerate}
\end{Th}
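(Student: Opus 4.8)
The plan is to follow the same two-step strategy used in the proof of Theorem~\ref{theorem: dspin7 for phi1}: first solve equation (\ref{equ: dspin7 I}) for $\Phi_2$, which cuts down the possible shapes of $A$; then impose equation (\ref{equ: dspin7 II}) on each resulting branch; and finally retain only those local solutions that satisfy the smoothness condition $A(\sqrt{2c})=k(\theta_1+\theta_2)$ at the zero section, i.e. $a_2(\sqrt{2c})=k$ and $a_3(\sqrt{2c})=a_4(\sqrt{2c})=0$.

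For the first step I would reuse the expression for $F_A\w F_A$ already recorded in the proof of Theorem~\ref{theorem: dspin7 for phi1}. By Proposition~\ref{prop: spin7modulesasSp2}, applying the cyclic permutation of the indices $1,2,3$ that carries $\Phi_3$ to $\Phi_2$, the $\SU(3)$-invariant $4$-forms lying in $\Lm^4_7$ are spanned by $\om_2\w\om_3$, $\om_1\w\om_2$ and $\al_3\w\om_1$, where $\al_3$ generates $E_3$ as in Proposition~\ref{prop: spaces Ei}. Thus (\ref{equ: dspin7 I}) is equivalent to the vanishing of $g_{Ca}(F_A\w F_A,\,\cdot\,)$ against this triple, which upon substituting Proposition~\ref{prop: HK structure of TCP2} and (\ref{equ: curvature tcp2}) becomes a coupled first-order system in $a_2,a_3,a_4$. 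As in the $\Phi_1$ case I expect each equation to collapse to the vanishing of a total $r$-derivative. The key difference is that the distinguished module is now $E_3$ rather than $E_2$, and since $\al_3$ involves $\theta_4$, $\theta_{23}$ and $\sigma_3$ — the ingredients multiplying $a_4$ in (\ref{equ: curvature tcp2}) — the variables $a_3$ and $a_4$ enter asymmetrically; this is what splits the solution set into the two inequivalent families (3) (with $a_3=0$, $a_2$ coupled to $a_4$) and (4) (with $a_4=0$, $a_2=2ck/r^2$) in the statement, besides the reducible branch $a_3=a_4=0$.

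On each branch I would then impose (\ref{equ: dspin7 II}), i.e. $\pi^2_7(F_A-\tfrac16 * F_A^3)=0$, computed via (\ref{pi27}). As in Theorems~\ref{theorem: dhym for om1}, \ref{theorem: dhym for om2} and \ref{theorem: dspin7 for phi1}, I expect the ODE obtained on each branch to be exact, so it integrates to an algebraic relation — here reducing to a quadratic in the remaining unknown after the substitutions dictated by the branch — with a single constant of integration $C$. Plugging in $r=\sqrt{2c}$ together with the boundary values then fixes $C$, forces the auxiliary constants to vanish where they occur, and selects the $\pm$ sign so that $a_2(\sqrt{2c})=k$ (resp. $a_3(\sqrt{2c})=0$); this yields the explicit formulae (\ref{hyperholomorphic dspin7 phi2})--(\ref{a33333}). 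A short inspection of the discriminants under each square root shows the solutions extend to all $r\in[\sqrt{2c},\infty)$. Finally one records, as in Remark~\ref{rem: dspin7 phi1}, that branch (1) is the hyper-holomorphic connection of Lemma~\ref{lemma: holomorphic} and branch (2) is the $\om_1$-dHYM solution with $a_3=a_4=0$ (consistent with Proposition~\ref{prop: hyperholomorphic implies deformed} and Theorem~\ref{thm: result of kawai-yamamoto}(1)), while (3) and (4) are genuinely strict deformed $\spin(7)$-instantons.

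The main obstacle is computational rather than conceptual: both $F_A\w F_A$ and $* F_A^3$ are unwieldy $8$-manifold expressions, and because there is no closed-form formula for $\pi^4_7$ on $4$-forms one must run the $E_i$-based algorithm for $F_3^+$ described after Proposition~\ref{prop: spin7modulesasSp2}, which demands careful bookkeeping. The only genuinely non-routine point is the branch analysis of the system coming from (\ref{equ: dspin7 I}): one must make sure to enumerate \emph{all} solution families — in particular both asymmetric ones, distinguished by whether $a_3$ or $a_4$ vanishes — and then check that, after imposing the boundary conditions, no further globally defined connections arise beyond those in the list.
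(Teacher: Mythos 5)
Your proposal follows the paper's own proof essentially verbatim: the same identification of the invariant part of $\Lm^4_7$ for $\Phi_2$ as $\langle \om_2\w\om_3,\ \om_1\w\om_2,\ \al_3\w\om_1\rangle$, the same branch analysis of (\ref{equ: dspin7 I}) driven by the asymmetry between $a_3$ and $a_4$ (either $a_3=0$ or $a_4=0$ after imposing the boundary data), and the same integration of (\ref{equ: dspin7 II}) as an exact ODE on each branch. The only cosmetic differences are that the paper dispatches the $a_3=a_4=0$, $a_2\neq 2ck/r^2$ branch by invoking Theorem \ref{thm: result of kawai-yamamoto}(1) rather than integrating directly, and that your bookkeeping of constants is slightly off: the integrated relation on the strict branches is cubic (not quadratic) with its integration constant killed by $a_4(\sqrt{2c})=0$, while the free parameter $C$ appearing in cases (3) and (4) survives precisely because $(r^4-4c^2)^{1/2}$ vanishes at $r=\sqrt{2c}$, so the boundary condition does not fix it and one obtains the stated one-parameter families.
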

\begin{proof}
	From Propositions \ref{prop: spin7modulesasSp2} and \ref{prop: spaces Ei} we have that the $\SU(3)$-invariant forms in $\Lm^4_{7}$ (defined with respect to $\Phi_2$) are spanned by $\al_3\w \om_1 = - \al_1\w \om_3\in F_2^+$, $\om_2\w \om_3$  and $\om_1\w \om_2$. Hence $\pi^4_7(F_A\w F_A)=0$ is equivalent to the condition that  $F_A \w F_A$ is orthogonal to the latter triple. A direct but lengthy calculation as before shows this is given by the system
	\begin{gather}
		\frac{d}{dr}(a_3 a_4 (r^4-4c^2) )=0,\label{first}\\
		(a_2 r^2 -2ck)^2 \cdot \frac{d}{dr}\Big(\frac{a_4(r^4-4c^2)^{1/2}}{a_2 r^2 -2ck}\Big)=0,\label{second}\\
		\frac{d}{dr}((a_2 r^2-2ck)(r^4-4c^2)^{1/2}a_3)=0.\label{third}
	\end{gather}	
	Imposing the initial conditions $a_3(\sqrt{2c})=a_4(\sqrt{2c})=0$ in (\ref{first}) implies that either $a_3(r)=0$ or $a_4(r)=0$.
	
	Assuming $a_3(r)=0$ automatically solves (\ref{third}) and from (\ref{second}) we find that we need $a_2(r)$ of the form (\ref{gttttttt}). Substituting the latter in (\ref{equ: dspin7 II}), a long calculation yields the exact ODE:
	\begin{align*}
		\frac{d}{dr}\Big(\frac{(C^2(r^4-4c^2)+r^4)(r^4-4c^2)^{1/2}}{r^4}a_4^3+\frac{4Cck(r^4-4c^2)}{r^4}a_4^2-\frac{(r^4+4k^2)(r^4-4c^2)^{3/2}}{4r^4}a_4\Big)=0
	\end{align*}
	Solving for $a_4(r)$ with the initial condition that $a_4(\sqrt{2c})=0$ shows that either $a_4(r)=0$, in which case we get (\ref{hyperholomorphic dspin7 phi2}), or $a_4(r)$ is given by (\ref{a44444}).
	
	On the other hand, assuming instead that $a_4(r)=0$ automatically solves (\ref{second}) and we have that  (\ref{third}) is equivalent to
	\begin{equation}
		(a_2 r^2-2ck)(r^4-4c^2)^{1/2}a_3 = C,
	\end{equation}
	where $C$ is a constant. Imposing the initial conditions $a_2(\sqrt{2c})=k$ and $a_3(\sqrt{2c})=0$ in the latter shows that $C=0$ and hence either $a_2=2ck/r^2$ or $a_3=0$. In the latter case from Lemma \ref{lemma: holomorphic} we know that $A$ is holomorphic with respect to $\om_1$. Thus, by Theorem \ref{thm: result of kawai-yamamoto} (1), the deformed $\spin(7)$-instantons with respect to $\Phi_2$ coincide with $\om_1$-dHYM connections with phase $1$ and this gives (\ref{dhym om1 dspin7 phi2}). In the former case again from Lemma \ref{lemma: holomorphic} we now see that $A$ is holomorphic with respect to $\om_2$. However, we cannot appeal to Theorem \ref{thm: result of kawai-yamamoto} as before since we are now considering deformed $\spin(7)$-instantons with respect to $\Phi_2$. With the hypothesis that $a_2=2ck/r^2$ and $a_4=0$, we find after a long computation that (\ref{equ: dspin7 II}) is equivalent to
	\begin{align*}
	\big(4a_3^2 r^4-(r^4-4c^2)(r^4+4k^2)\big)^3\frac{d}{dr}\Big(\frac{\big(4a_3^2 r^4+(r^4-4c^2)(r^4-4k^2)\big)^2+16k^2r^4(r^4-4c^2)^2}{\big(4a_3^2 r^4-(r^4-4c^2)(r^4+4k^2)\big)^2}\Big)=0. 
	\end{align*}
	Solving for $a_3(r)$ with $a_3(\sqrt{2c})=0$ gives (\ref{a33333}).
\end{proof}
Note that the deformed $\spin(7)$-instantons with respect to $\Phi_3$ can be deduced from Theorem \ref{theorem: dspin7 for phi2} by swapping $a_3$ and $a_4$. 
\begin{Rem}\label{rem: dspin7 phi2}\
	\begin{enumerate}
		\item Again as expected from Proposition \ref{prop: hyperholomorphic implies deformed} the solution given by (\ref{hyperholomorphic dspin7 phi2}) corresponds to the hyper-holomorphic connection of Lemma \ref{lemma: holomorphic}.
		\item The solutions given by (\ref{dhym om1 dspin7 phi2}) correspond to the $\om_1$-dHYM connections with phase $1$. Recall also that the $\om_3$-dHYM connections can be deduced from Theorem \ref{theorem: dhym for om2} by swapping $a_3$ and $a_4$. The solutions given by (\ref{gttttttt})-(\ref{a44444}) when $C=0$ correspond precisely to the $\om_3$-dHYM connections with phase $1$; this is again a consequence of Theorem \ref{thm: result of kawai-yamamoto} (1).  
		More generally, one can show that these deformed $\spin(7)$-instantons are in fact also dHYM connections with phase $1$ with respect to the $1$-parameter family of the K\"ahler forms given by  $\frac{C}{\sqrt{C^2+1}}\om_1+\frac{1}{\sqrt{C^2+1}}\om_3$. The solution (\ref{dhym om1 dspin7 phi2}) is obtained from (\ref{gttttttt})-(\ref{a44444}) by setting $C=1/\tilde{C}$ and taking the limit as $\tilde{C}\to 0$. It is not hard to verify that all these deformed $\spin(7)$-instantons also satisfy $*F_A^4=24$, cf. Remark \ref{remark: harveylawson}.
		\item Setting $\tan(\theta)=\frac{2 C}{C^2-1}$ in (\ref{dhym om2 tcp2 solution}) one can verify directly that the solutions given by (\ref{dhym om2 tcp2 solution}) coincide with those given by (\ref{a33333}). Note that for each fixed $C$ and $k$ (\ref{a33333}) consists of two solutions and on the other hand given $\theta$ there are in general two values of $C$ solving $\tan(\theta)=\frac{2 C}{C^2-1}$. Hence we see that this indeed yields a total of four solutions $a_3(r)$ for given $\theta$ and $k$ which is consistent with Theorem \ref{theorem: dhym for om2}.
		Thus, we have that the deformed $\spin(7)$-instantons given by (\ref{a33333}) correspond to the $\om_2$-dHYM connections for \textit{all} phase angle $\theta$. In particular, we see that asymptotically $a_3(r) \sim (C \pm \sqrt{C^2+1})r^2/2$  i.e. $a_3$ always has quadratic growth but the coefficient depends on the phase angle.
		Unlike the previous deformed $\spin(7)$-instantons, these examples do not satisfy $*F_A^4=24$ except when $C=0$.
	\end{enumerate}
\end{Rem}

Note that when $\tan(\theta)\neq 0$ equation (\ref{equ: dhym n=4}) is quartic in $F_A$ while equation (\ref{equ: dspin7 II}) is only cubic, so in general one would not expect the dHYM connections and deformed $\spin(7)$-instantons to coincide. However, as we saw in Remark \ref{rem: dspin7 phi1} (3) and \ref{rem: dspin7 phi2} (3), these instantons also satisfy $*F_A^4=24$ which might explain the above observations. It is also worth pointing out that the condition $*F_A^4 \neq 24$ is necessary in order to obtain a good description of the moduli space of deformed $\spin(7)$-instantons, see \cite[Theorem 1.2]{Kawai2021deformationdSpin7}. So it would be interesting to understand the geometrical significance of this condition. 
We also saw in this section that all the ODEs arising from the deformed instanton equations were always exact and hence we were able to find explicit solutions. However, this is not the case in general as we shall show in the next section.

\section{Deformed $\spin(7)$-instantons on $3$-Sasakian cones}\label{section: bscone}
Recall that the cone metric over a strictly nearly parallel $\G_2$ manifold has holonomy group equal to $\Spin(7)$ while the cone metric over a $3$-Sasakian $7$-manifold has holonomy group equal to $\Sp(2)$. In \cite{Lotay2020} Lotay-Oliveira constructed deformed $\G_2$-instantons on the Aloff-Wallach space $N_{1,1}$ endowed with its $3$-Sasakian and strictly nearly parallel $\G_2$-structures. They showed that the moduli space of deformed $\G_2$-instantons in each case is different: in particular, in the $3$-Sasakian case the moduli space is one dimensional while in the strictly nearly parallel case it is two dimensional cf. \cite[Corollary 4.12]{Lotay2020}. Hence their result shows that the deformed $\G_2$-instantons can be used to distinguish between these two $\G_2$-structures. In this section we shall show that a similar result also holds for deformed $\spin(7)$ instantons on the associated $\spin(7)$ cones. As before we shall again use the notation introduced in Section \ref{section: flag manifold}.

\begin{Prop}\label{prop: dspin equations for BS}
	The connection form $A(r)=k \theta_1+a_2\theta_2+a_3\theta_3+a_4\theta_4$ is a
	deformed $\spin(7)$-instanton on $N_{1,1}\times \R^+$ for the Bryant-Salamon $\spin(7)$-structure as given in Proposition \ref{prop: spin7 structure of TCP2} if and only if 
	\begin{equation}
		A = k \theta_1+ p(r)\big(C_2\theta_2+C_3\theta_3+C_4\theta_4\big),\label{connetion BS}
	\end{equation}
	where $C_i$ are constants and $p(r)$ solves
	\begin{equation}
		\big((9r^2+10c)C^2p^2+k^2r^2+100r^2(r^2+c)^{6/5}\big)p'-40r(3r^2+5c)(r^2+c)^{1/5}p+2r(C^2p^2-k^2)p=0,\label{dspin7bs}
	\end{equation}
	where $C^2:=C_2^2+C_3^2+C_4^2$. 	
\end{Prop}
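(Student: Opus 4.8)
The plan is to follow the same two-stage strategy as in the proof of Theorem~\ref{theorem: dspin7 for phi1}: first use the equation~(\ref{equ: dspin7 I}), i.e. $\pi^4_7(F_A\w F_A)=0$, to constrain the radial functions $a_2,a_3,a_4$, and then feed the resulting ansatz into~(\ref{equ: dspin7 II}), i.e. $\pi^2_7(F_A-\frac{1}{6}* F_A^3)=0$, to obtain a single scalar ODE for the remaining unknown function. The one new ingredient compared to Section~\ref{section: main} is that the Bryant--Salamon $4$-form $\Phi_{BS}$ of Proposition~\ref{prop: spin7 structure of TCP2} does \emph{not} arise from an $\Sp(2)$-structure, so we cannot appeal to Proposition~\ref{prop: spin7modulesasSp2}; instead one must first identify the $\SU(3)$-invariant parts of the modules $\Lm^2_7$ and $\Lm^4_7$ directly. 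Since $N_{1,1}$ with the squashed metric is a strictly nearly parallel $\G_2$-manifold whose metric cone carries $\Phi_{BS}$, one can either use the standard description of $\Lm^2_7$ and $\Lm^4_7$ on a $\Spin(7)$-cone in terms of $\Lm^1_7$ and $\Lm^3_7$ of the link and the Hodge star, or simply compute: among the finitely many $\SU(3)$-invariant $2$-forms on $N_{1,1}\times\R^+$ (namely $dr\w\theta_2,dr\w\theta_3,dr\w\theta_4$ together with $\theta_{23},\theta_{24},\theta_{34},\theta_{56},\theta_{78},\sigma_2,\sigma_3$) one picks out those lying in $\Lm^2_7$ via~(\ref{pi27}) and then obtains the invariant generators of $\Lm^4_7$ by applying the $\diamond$-action of $\Lm^2_7$ on $\Phi_{BS}$. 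Either way this produces a small (three-dimensional) space of $\SU(3)$-invariant $4$-forms in $\Lm^4_7$.

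For the first equation I would compute $F_A\w F_A$ from Proposition~\ref{prop: curvature FA}; this is a long but mechanical wedge computation of exactly the type displayed in the proof of Theorem~\ref{theorem: dspin7 for phi1}. Then $\pi^4_7(F_A\w F_A)=0$ is equivalent to $F_A\w F_A$ being $g_{BS}$-orthogonal to the three invariant generators of $\Lm^4_7$ found above. Expanding and simplifying the three resulting inner products gives a system of first-order ODEs in $(a_2,a_3,a_4)$ which (schematically, as in Theorem~\ref{theorem: dspin7 for phi1}) take the shape $\frac{d}{dr}(\mu(r)\,a_ia_j)=0$ together with $a_i'a_j=a_ia_j'$; because $\Phi_{BS}$ is symmetric under permutations of the indices $2,3,4$ (unlike the $\Phi_i$, which single out one index), no auxiliary factor can vanish identically in only one direction, and the only solutions are those for which $(a_2,a_3,a_4)$ is everywhere proportional to a fixed vector $(C_2,C_3,C_4)$. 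Writing $a_i(r)=C_i\,p(r)$ then yields precisely the connection~(\ref{connetion BS}).

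Finally I would substitute~(\ref{connetion BS}) into~(\ref{equ: dspin7 II}). Using~(\ref{pi27}) one needs $F_A-\frac{1}{6}* F_A^3$, so $F_A^3$ is computed and $*$ taken with respect to $g_{BS}$ from Proposition~\ref{prop: spin7 structure of TCP2}; the projection $\pi^2_7$ then extracts the $\Lm^2_7$-component, which by $\SU(3)$-invariance is a multiple of the single $2$-form built from $(C_2\theta_2+C_3\theta_3+C_4\theta_4)\w dr$ and its associated angular part. Setting that multiple to zero and clearing denominators gives, after simplification, exactly the ODE~(\ref{dspin7bs}) with $C^2=C_2^2+C_3^2+C_4^2$; one also checks that~(\ref{equ: dspin7 I}) is automatically satisfied once the proportionality of the previous step holds, so the two equations are consistent. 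The main obstacle is the bookkeeping around $\pi^4_7$: since $\Phi_{BS}$ admits no clean almost-complex-structure characterisation (cf. the discussion following~(\ref{pi27})), one has to carry out the $\diamond$-computation honestly, or use the cone reduction, to pin down $\Lm^4_7$ before the orthogonality argument can be run. After that the manipulations are routine, although — in contrast to Section~\ref{section: main} — I would not expect~(\ref{dspin7bs}) to be an exact ODE in general, which is precisely why only special solutions are obtained in the subsequent theorems of this section (Theorems~\ref{thm: dspin7 cone} and~\ref{thm: dspin7 cone HK}).
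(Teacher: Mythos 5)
Your proposal is correct and follows essentially the same route as the paper: identify the three $\SU(3)$-invariant generators of $\Lm^2_7$ and hence of $\Lm^4_7$ for $\Phi_{BS}$ directly (the paper does this via (\ref{pi27}) and then a direct computation of the invariant part of $\Lm^4_7$), impose orthogonality of $F_A\w F_A$ to that triple to get the Wronskian-type system $a_i'a_j=a_ia_j'$ forcing $a_i=C_ip(r)$, and then substitute into (\ref{equ: dspin7 II}) to extract the scalar ODE (\ref{dspin7bs}). The paper's system from $\pi^4_7(F_A\w F_A)=0$ consists purely of the three equations $a_i'a_j=a_ia_j'$ (no $\frac{d}{dr}(\mu\,a_ia_j)=0$ terms appear in this symmetric case), but your conclusion and the final ODE agree with the paper's.
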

\begin{proof}
Recall that in Proposition \ref{prop: spaces Ei} we showed that the space of $\SU(3)$-invariant $2$-forms are spanned by $\langle \theta_{23},\theta_{24},\theta_{34},\theta_{56},\theta_{78},\sigma_{2},\sigma_{3}, dr \w \theta_2, dr \w \theta_3, dr \w \theta_4\rangle$. In this 10 dimensional family using (\ref{pi27}) we find that $\Lm^2_7$ (defined with respect to $\Phi_{BS}$) is $3$-dimensional and in fact is spanned by 
\begin{gather*}
h_0h_2 dr \w \theta_2+h_2^2 \theta_{34}-h_5^2 \theta_{56}-h_5^2 \theta_{78},\\
h_0h_2 dr \w \theta_3-h_2^2 \theta_{24}-h_5^2 \theta_{57}+h_5^2 \theta_{68},\\
h_0h_2 dr \w \theta_4+h_2^2 \theta_{23}-h_5^2 \theta_{58}-h_5^2 \theta_{67}.
\end{gather*}
where $h_i$ are as defined in Proposition \ref{prop: spin7 structure of TCP2}. 
It follows that the space of $\SU(3)$-invariant forms in $\Lm^4_7$ is also $3$ dimensional and a rather laborious calculation shows that it is spanned by
\begin{gather*}
	h_0h_2h_5^2dr \w (\theta_{358}+\theta_{367}-\theta_{457}+\theta_{468} )-h_2^2h_5^2(\theta_{2357}-\theta_{2368}+\theta_{2458}+\theta_{2467}),\\
	h_0h_2h_5^2dr \w (\theta_{258}+\theta_{267}-\theta_{456}-\theta_{478} )-h_2^2h_5^2(\theta_{2356}+\theta_{2378}-\theta_{3458}-\theta_{3467}),\\
	h_0h_2h_5^2dr \w (\theta_{257}-\theta_{268}-\theta_{356}-\theta_{378}) +h_2^2h_5^2(\theta_{2456}+\theta_{2478}+\theta_{3457}-\theta_{3468}).
\end{gather*}
Hence $\pi^4_7(F_A \w F_A)=0$ if and only if $F_A \w F_A$ is orthogonal to latter triple. A direct computation shows that $\ref{equ: dspin7 I}$ is equivalent to
\begin{gather}
	a_2' a_3 = a_2 a_3',\\
	a_2' a_4 = a_2 a_4',\\
	a_3' a_4 = a_3 a_4'.
\end{gather} 
Thus, it follows that we need $A$ of the form (\ref{connetion BS}).
Using Proposition \ref{prop: curvature FA}, a long calculation shows that 
\begin{align*}
	F_A - \frac{1}{6}*F_A^3 =\ &\Big(\frac{50r(r^2+c)^{6/5} p'+C^2 p^3-k^2p}{200 (r^2+c)^{4/5}r^2}\Big) h_0 h_2 (\theta_1 \w (C_2\theta_2+C_3\theta_3+C_4\theta_4)) -\\
	&\Big(\frac{200(r^2+c)^{6/5} p^2+r(C^2p^2-k^2)p'}{400 (r^2+c)^{4/5}r^2}\Big) h_2^2  (C_2\theta_{34}+C_3\theta_{42}+C_4\theta_{23})+\\
	&\Big(\frac{C^2(r^2+c)^{4/5}(k-C_2p)pp'-8r^3(C_2p+k)}{80 (r^2+c)^{3/5}r^3}\Big) h_5^2  \theta_{56}+\\
	&\Big(\frac{C^2(r^2+c)^{4/5}(-k-C_2p)pp'-8r^3(C_2p-k)}{80 (r^2+c)^{3/5}r^3}\Big) h_5^2  \theta_{78}-\\	
	&\Big(\frac{(C^2(r^2+c)^{4/5}pp'+8r^3)p}{80 (r^2+c)^{3/5}r^3}\Big) h_5^2  (C_3\sigma_2+C_4\sigma_3).
\end{align*}
Using (\ref{pi27}) we can then compute (\ref{equ: dspin7 II}) and another lengthy computation yields (\ref{dspin7bs}).
\end{proof}
In contrast to the ODEs we obtained in Section \ref{section: main}, equation (\ref{dspin7bs}) is not exact and hence we have been unable to find the general solution to (\ref{dspin7bs}). However in the special case when $k=c=0$, i.e. when $g_{BS}$ corresponds to the conical metric and $L$ is the trivial line bundle, we have the following result:
\begin{Th}\label{thm: dspin7 cone}
Consider $N_{1,1}\times \R^+$ endowed with the Bryant-Salamon conical $\spin(7)$-structure of Proposition \ref{prop: spin7 structure of TCP2}. Then the $\SU(3)$-invariant connection $A(r)=a_2\theta_2+a_3\theta_3+a_4\theta_4$ is a deformed $\spin(7)$-instanton 
if and only if 
\begin{equation}
	A =  \frac{10 r^{6/5}}{3\sqrt{W(C_0  r^{128/45})}}\Big(\frac{C_2\theta_2+C_3\theta_3+C_4\theta_4}{\sqrt{C_2^2+C_3^2+C_4^2}}\Big),\label{dspin7conical}
\end{equation}
where $W$ denotes the Lambert W-function, implicitly defined by $x=W(x)\exp(W(x))$, and $C_0$ is a positive constant. On the other hand we have that $A$ is a $\spin(7)$-instanton if and only if
\begin{equation}
	A =  r^{6/5}\big(C_2\theta_2+C_3\theta_3+C_4\theta_4\big).\label{spin7conical}
\end{equation}
\end{Th}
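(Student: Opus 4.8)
The plan is to specialise the ODE of Proposition \ref{prop: dspin equations for BS} to $k=c=0$, solve it in closed form, and recognise the answer as the Lambert $W$-function. Setting $k=c=0$ in Proposition \ref{prop: dspin equations for BS}, an $\SU(3)$-invariant deformed $\spin(7)$-instanton must have the form $A=p(r)(C_2\theta_2+C_3\theta_3+C_4\theta_4)$ with $C^2:=C_2^2+C_3^2+C_4^2$, where $p$ solves the reduction of (\ref{dspin7bs}), namely
\[
\big(9C^2 r^2 p^2+100\,r^{22/5}\big)p'-120\,r^{17/5}p+2C^2 r\,p^3=0 .
\]
For the $\spin(7)$-instanton statement I first observe that every $\spin(7)$-instanton automatically satisfies (\ref{equ: dspin7 I}) (Section \ref{section: preliminaries}), so the first part of the proof of Proposition \ref{prop: dspin equations for BS} still forces $A=p(r)(C_2\theta_2+C_3\theta_3+C_4\theta_4)$; the residual condition $\pi^2_7(F_A)=0$ is then obtained from the same computation (using (\ref{equ: curvature tcp2}) and (\ref{pi27})) upon discarding the cubic $-\tfrac16 *F_A^3$ contribution, i.e.\ it is the linear equation $100\,r^{22/5}p'=120\,r^{17/5}p$, equivalently $r p'=\tfrac65 p$. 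Hence $p=c_1 r^{6/5}$, and absorbing $c_1$ into the $C_i$ gives (\ref{spin7conical}).

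For the deformed case I would substitute $p(r)=r^{6/5}s(r)$ into the displayed ODE; a short computation reduces it to the separable equation
\[
\tfrac{64}{5}C^2 s^3+\big(9C^2 s^2+100\big)r\,s'=0 ,
\]
which integrates to $9C^2\ln s-50/s^2+\tfrac{64}{5}C^2\ln r=K$ for a constant $K$. The crucial step is then to set $W:=100/(9C^2 s^2)$: using $s^2=100/(9C^2 W)$ and $50/s^2=\tfrac92 C^2 W$, the first integral rearranges into $\ln W+W=\ln\!\big(C_0\,r^{128/45}\big)$, where $C_0:=\tfrac{100}{9C^2}\exp(-2K/9C^2)>0$ and the exponent is $\tfrac{128}{45}=\tfrac29\cdot\tfrac{64}{5}$. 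Exponentiating gives $W e^{W}=C_0 r^{128/45}$, so $W=W(C_0 r^{128/45})$ is the Lambert $W$-function evaluated at $C_0 r^{128/45}$; since $C_0>0$ and $W(x)>0$ for $x>0$, this is well-defined and positive on all of $\R^+$. Then $s=10/(3C\sqrt{W})$ and $p=10\,r^{6/5}/(3C\sqrt{W})$ with $C=\sqrt{C_2^2+C_3^2+C_4^2}$, which is exactly (\ref{dspin7conical}). Conversely, differentiating $W e^{W}=C_0 r^{128/45}$ (using $xW'(x)=W/(1+W)$) verifies that every such $p$ solves the ODE, hence is a deformed $\spin(7)$-instanton by Proposition \ref{prop: dspin equations for BS}; as $K$ ranges over $\R$ the constant $C_0$ exhausts $(0,\infty)$, so we obtain all of them.

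The substitution $p=r^{6/5}s$, the separable integration, and the linear truncation used in the $\spin(7)$-instanton case are all elementary; everything heavier — the curvature computation, the identification of the invariant parts of $\Lm^2_7$ and $\Lm^4_7$ for $\Phi_{BS}$, and the derivation of (\ref{dspin7bs}) itself — is supplied by Proposition \ref{prop: dspin equations for BS} and will be taken as given. The one genuinely delicate point is the final recognition of the Lambert $W$ form: one must choose the substitution $W=100/(9C^2 s^2)$ so that the $\ln s$ term and the $1/s^2$ term of the first integral combine into $\ln W+W$, and track the fractional exponents ($6/5$, $17/5$, $22/5$, $128/45$) without slips.
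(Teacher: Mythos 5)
Your proposal is correct and follows essentially the same route as the paper: specialise the ODE of Proposition \ref{prop: dspin equations for BS} to $k=c=0$ and integrate it, which the paper states without detail and you carry out explicitly (the substitution $p=r^{6/5}s$, the separable first integral $9C^2\ln s-50/s^2+\tfrac{64}{5}C^2\ln r=K$, and the identification $W=100/(9C^2s^2)$ all check out, and the exponent $128/45$ is right). The only cosmetic difference is in the $\spin(7)$-instanton case, where the paper recomputes $*(F_A\wedge\Phi_{BS})=-F_A$ directly while you read off the linear-in-$p$ truncation of (\ref{dspin7bs}); since $\pi^2_7$ is linear and $F_A$ (resp.\ $*F_A^3$) contributes the degree-one (resp.\ degree-three) part of that scalar equation, this shortcut is sound and yields the same equation $rp'=\tfrac{6}{5}p$.
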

\begin{proof}
Setting $c=k=0$ in (\ref{dspin7bs}) we get
\begin{equation*}
	\big(9C^2rp^2+100r^{17/5}\big)p'-120r^{12/5}p+2C^2p^3=0.
\end{equation*}
Solving the latter yields (\ref{dspin7conical}). On the other hand, a straightforward computation shows that $*(F_A\w \Phi_{BS})=-F_A$ is equivalent to the system
\[r(r^2+c)a_i'-2(\frac{3r^2}{5}+c)a_i=0.\]
The solution with $c=0$ is given by (\ref{spin7conical}).
\end{proof}

Note that since asymptotically $W(x)\sim \log(x) +o(1)$, it follows that the deformed $\Spin(7)$-instantons given by (\ref{dspin7conical}) have slower growth rate than the $\Spin(7)$-instantons given by (\ref{spin7conical}). We should point out that Theorem \ref{thm: dspin7 cone} does not only apply to the Aloff-Wallach space $N_{1,1}$ but to any $3$-Sasakian $7$-manifold with its squashed strictly nearly parallel $\G_2$-structure as given by \cite[Proposition 2.4]{GalickiSalamon1996}; in this case the $1$-forms  $\theta_{2},\theta_{3},\theta_{4}$ in Theorem \ref{thm: dspin7 cone} are then replaced by the dual $1$-forms to the Reeb Killing vector fields. Moreover, it is worth noting that Proposition \ref{prop: dspin equations for BS} with $k=0$ also corresponds to the deformed $\Spin(7)$ condition on the trivial line bundle over the spinor bundle of $S^4$ endowed with the \textit{complete} Bryant-Salamon $\Spin(7)$ metric \cite{Bryant1989}: this is simply because the structure equations are the same. In this case the initial condition we need for $A$ to extend smoothly to the zero section $S^4$ is $p(0)=0$. Thus, equation (\ref{dspin7bs}) corresponds to a singular initial value problem and as such one cannot appeal standard theory to obtain existence and uniqueness of a solution. With $k=0$ and $C=c=1$, seeking for a power series solution to (\ref{dspin7bs}) around $r=0$ gives
\[
p(r)=a-\frac{a^2-100}{10a} r^2+\frac{a^4-2000}{20a^3}r^4+\cdots
\]
where $a\in \R$. In particular, we see that such a solution cannot vanish at $r=0$. We saw a similar phenomenon when investigating the deformed $\G_2$-instanton equations on the spinor bundle of $S^3$  in \cite[Section 4.2]{FowdardG2}, but nonetheless there can still exist smooth solutions as demonstrated therein. We hope to analyse equation (\ref{dspin7bs}) in more detail in future work to find global solutions. 
Note that we also obtained singular initial value problems in the last section but this was not a difficulty since we were already able to solve the equations explicitly.

We now want to compare the above instantons on the strictly nearly parallel $\G_2$ cone with those on the $3$-Sasakian cone. From the results of Section \ref{section: main} we can easily deduce the following:
\begin{Th}\label{thm: dspin7 cone HK}
Consider $N_{1,1}\times \R^+$ endowed with the Calabi conical hyperK\"ahler structure of Proposition \ref{prop: HK structure of TCP2}.
Then the $\SU(3)$-invariant connection $A(r)=a_2\theta_2+a_3\theta_3+a_4\theta_4$ is a
deformed $\spin(7)$-instanton with respect to $\Phi_1$  if and only if 
either $A=C_2r^2\theta_2$ or
\begin{equation}
	A=p(r)\Big(\frac{C_3\theta_3+C_4\theta_4}{2\sqrt{C_3^2+C_4^2}}\Big),\label{oooooo}
\end{equation}
where $p(r)$ is implicitly defined by
\begin{equation}
	p^3-{r^4}p-\frac{C_0}{r^2}=0.\label{oooooop}
\end{equation}
On the other hand we have that $A$ is a $\spin(7)$-instanton with respect to $\Phi_1$ if and only if
\begin{equation}
	A =  C_2r^2\theta_2+\frac{1}{r^6}\Big(C_3\theta_3+C_4\theta_4\big).
\end{equation}
	The instantons with respect to $\Phi_2$ and $\Phi_3$ are obtained by permuting $\theta_2,\theta_3$ and $\theta_4$ in the above expressions for $A$.
\end{Th}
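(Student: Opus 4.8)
The plan is to deduce the result by specialising the computations in the proof of Theorem \ref{theorem: dspin7 for phi1} to the conical case $c=0$, $k=0$ (the ansatz $A(r)=a_2\theta_2+a_3\theta_3+a_4\theta_4$ of the statement being the $k=0$ reduction of the general $\SU(3)$-invariant connection of Proposition \ref{prop: curvature FA}), and then replacing the smooth-extension-over-the-bolt analysis of that theorem by the requirement that $A$ be defined on all of $N_{1,1}\times\R^+$.

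First I would note that, by Propositions \ref{prop: spin7modulesasSp2} and \ref{prop: compute Eis}, equation (\ref{equ: dspin7 I}) with respect to $\Phi_1$ amounts to $F_A\w F_A$ being orthogonal to $\al_2\w\om_3$, $\om_1\w\om_2$, $\om_1\w\om_3$; putting $c=k=0$ in the three ODEs this produced in the proof of Theorem \ref{theorem: dspin7 for phi1} gives $\tfrac{d}{dr}(r^4a_2a_3)=0$, $a_3'a_4=a_4'a_3$, $\tfrac{d}{dr}(r^4a_2a_4)=0$. Hence either $a_2\equiv 0$, so that $A=p(r)(C_3\theta_3+C_4\theta_4)$ with $p$ so far arbitrary and $C_3,C_4$ constant, or $a_2$ is nowhere-vanishing, so that $A=a_2\theta_2+\tfrac{1}{r^4a_2}(C_3\theta_3+C_4\theta_4)$; these are the $c=k=0$ cases of (\ref{equ: tbc 2}) and (\ref{equ: tbc}). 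Substituting $c=k=0$ into the corresponding expressions from the proof of Theorem \ref{theorem: dspin7 for phi1}, equation (\ref{equ: dspin7 II}) becomes, in the first case, $\tfrac{d}{dr}\big(r^2(C_3^2+C_4^2)p^3-\tfrac14 r^6 p\big)=0$, which after rescaling $p$ so that $A=p(r)\tfrac{C_3\theta_3+C_4\theta_4}{2\sqrt{C_3^2+C_4^2}}$ is precisely (\ref{oooooop}); and, in the second case, $\tfrac{d}{dr}\big(\tfrac{a_2^2-r^4/4}{r^2a_2}+\tfrac{C_3^2+C_4^2}{r^{10}a_2^3}\big)=0$, i.e. $a_2$ is cut out implicitly by $a_2^4-Cr^2a_2^3-\tfrac14 r^4a_2^2+\tfrac{C_3^2+C_4^2}{r^8}=0$ for a constant $C$.

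The one genuinely new point is to decide which of these local solutions globalise over the cone. In the first case (\ref{oooooop}) is a real cubic in $p$, so for every $r>0$ it has a real solution $p(r)$ depending smoothly on $r$; thus every $C_0$ and every direction $(C_3:C_4)$ gives a genuine $\SU(3)$-invariant connection on $N_{1,1}\times\R^+$, which is (\ref{oooooo})--(\ref{oooooop}). In the second case with $(C_3,C_4)\neq(0,0)$ I would rewrite the defining relation as $a_2^2\big(a_2^2-Cr^2a_2-\tfrac14 r^4\big)=-\tfrac{C_3^2+C_4^2}{r^8}<0$; the left-hand side equals $r^8\psi(a_2/r^2)$ with $\psi(x)=x^2\big(x^2-Cx-\tfrac14\big)$, and $m:=\min_{x\in\R}\psi(x)<0$ (since $\psi(x)=-x^2/4+O(x^3)$ near $0$), so a real $a_2(r)$ exists only when $r^{16}\ge (C_3^2+C_4^2)/|m|$ and hence not for small $r$. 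Therefore no connection of this type is defined on all of $N_{1,1}\times\R^+$ unless $C_3=C_4=0$, in which case $\tfrac{a_2^2-r^4/4}{r^2a_2}=C$ forces $a_2=\tfrac12(C\pm\sqrt{C^2+1})r^2$; as $C$ ranges over $\R$ the coefficient $\tfrac12(C\pm\sqrt{C^2+1})$ sweeps out $\R\setminus\{0\}$, and together with the trivial solution $a_2\equiv 0$ this gives exactly $A=C_2r^2\theta_2$, $C_2\in\R$. This exhausts the deformed $\spin(7)$-instantons for $\Phi_1$; it is the conical analogue of the obstruction at $r=\sqrt{2c}$ in Theorem \ref{theorem: dspin7 for phi1}, the difference being that here, there being no smooth bolt, we impose no condition at $r=0$, which is why (\ref{oooooop}) keeps the free constant $C_0$.

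For the $\Spin(7)$-instanton statement I would simply set $c=k=0$ in (\ref{spin-instanton phi1}), obtaining $A=C_1r^2\theta_2+\tfrac{1}{r^6}(C_2\theta_3+C_3\theta_4)$, which is defined on all of $N_{1,1}\times\R^+$; relabelling the constants gives the asserted form. The statements for $\Phi_2$ and $\Phi_3$ then follow by the cyclic permutation of $\theta_2,\theta_3,\theta_4$, exactly as in Proposition \ref{prop: spin7modulesasSp2} and the remark following Theorem \ref{theorem: dspin7 for phi2}. The main obstacle will be the extension argument of the third paragraph: recognising that the generic local branch of (\ref{equ: dspin7 II}) fails to extend over the cone, and identifying precisely which members of the two surviving families persist (all $C_0\in\R$, all $C_2\in\R$), which cannot simply be read off from the $T^*\C\mathbb{P}^2$ version in Theorem \ref{theorem: dspin7 for phi1}.
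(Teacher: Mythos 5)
Your proposal is correct and follows exactly the route the paper intends: Theorem \ref{thm: dspin7 cone HK} is stated as a direct specialisation of the computations in the proof of Theorem \ref{theorem: dspin7 for phi1} to $c=k=0$, and your $c=k=0$ reductions of the three orthogonality ODEs, of the two conserved quantities from (\ref{equ: dspin7 II}), and your rescaling leading to $p^3-r^4p-C_0/r^2=0$ all check out. The one place you go beyond what the paper records -- the argument via $\psi(x)=x^2(x^2-Cx-\tfrac14)$ showing that the mixed branch $a_3,a_4=c_i/(r^4a_2)$ with $(C_3,C_4)\neq(0,0)$ has no real solution for small $r$ and hence does not globalise over the cone -- is a genuine and correct addition that the paper leaves implicit.
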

Observe that when $A=C_2r^2\theta$ in Theorem \ref{thm: dspin7 cone HK}, we have that $F_A$ is a constant multiple of $\om_1$ and this corresponds to both a $\spin(7)$-instanton and deformed $\spin(7)$-instanton with respect to $\Phi_1$. On the other hand when $C_0=0$ and $p(r)=\pm r^2$, from (\ref{oooooo}) we have that  $F_A=\mp\frac{C_3\om_2+C_4\om_3}{\sqrt{C_3^2+C_4^2}}$ and this corresponds to a deformed $\spin(7)$-instanton  with respect to $\Phi_1$, but not a $\spin(7)$-instanton. These examples coincide with the elementary examples described in Section \ref{section: preliminaries}. Hence we only get non-elementary examples when $C_0\neq 0$, in which case from (\ref{oooooop}) we see that asymptotically either $p(r) \sim \pm r^2 $ or $p(r) \sim 0$. 
Furthermore, we should also point out that for each $C_0 \neq  0$ exactly one of these three solutions is well-defined on $\R^+$, while the other two are only defined on a subset of $\R^+$. 

Comparing the deformed $\spin(7)$-instantons of Theorem \ref{thm: dspin7 cone} and \ref{thm: dspin7 cone HK}, we see that the moduli spaces in each case have different dimensions and furthermore, they have different number of connected components. 

\begin{Rem}\label{rem: analogies}
In \cite[Corollary 4.5]{Lotay2020} Lotay-Oliveira showed that there exist a circle of non-trivial deformed $\G_2$-instantons on $N_{1,1}$ with respect to (one of) its $3$-Sasakian $\G_2$-structure and a $2$-sphere of non-trivial deformed $\G_2$-instantons with respect to its strictly nearly parallel $\G_2$-structure. In particular, this shows that deformed $\G_2$-instantons can discriminate between these two $\G_2$-structures cf. \cite[Remark 4.6]{Lotay2020}.  
If we fix the constant $C_0$ in Theorem \ref{thm: dspin7 cone} and \ref{thm: dspin7 cone HK} above and we restrict the deformed $\spin(7)$-instantons $A(r)$ to a hypersurface $r=r_0$, then we obtain a circle and $2$-sphere family of connections respectively; these coincide precisely with the aforementioned deformed $\G_2$-instantons of Lotay-Oliveira up to an overall constant factor. 
\end{Rem}

\noindent \textbf{Declarations.}
Data sharing is not applicable to this article as no datasets were generated or analysed during the current study. The author has no conflicts of interest to declare that are relevant to the content of this article.

\bibliography{biblioG}
\bibliographystyle{plain}

\end{document}